%% file: main.tex
\documentclass[review,hidelinks,onefignum,onetabnum]{siamart220329}
\newtheorem{example}{Example}[section]
\input{ex_shared}
\ifpdf
\hypersetup{
  pdftitle={Structure preserving quaternion conjugate gradient-type methods for solving non-Hermitian quaternion linear systems},
  pdfauthor={Baohua Huang}
}
\fi

\begin{document}

\maketitle

% REQUIRED
\begin{abstract}
In this paper, we consider the non-Hermitian quaternion linear systems arising from color image restoration and three-dimensional signal filtering problems. For exploring to solve such systems, we present two innovative structure-preserving conjugate gradient-type methods, QNHERLQ and QNHERQR, which are based on the unitary equivalence transformations of the non-Hermitian quaternion matrices to tridiagonal forms, called quaternion Saunders-Simon-Yip tridiagonalization procedure. The proposed tridiagonalization procedure for non-Hermitian quaternion matrices is closely related to the quaternion Lanczos process for Hermitian matrices,  and is very different from the quaternion Lanczos biorthogonalization process for non-Hermitian matrices. The convergence of QNHERLQ and QNHERQR is discussed, which depends on the singular values of the coefficient matrix. Also we show that both algorithms have the finite termination property and constant costs per iteration step. Numerical results illustrate that the proposed algorithms are with the robustness and effectiveness compared with QGMRES and QQMR.
\end{abstract}

%does not require preprocessing and has a wider range of applicability for solving large-scale linear systems.

% REQUIRED
\begin{keywords}
Quaternion linear systems; Quaternion Saunders-Simon-Yip tridiagonalization procedure; Conjugate gradient-type method; Non-Hermitian quaternion matrix
\end{keywords}

% REQUIRED
\begin{MSCcodes}
15B33; 65F10; 94A08
\end{MSCcodes}
\section{Introduction}\label{sec1}
The set of quaternions is an associative but not commutative algebra over $\mathbb{R}$, given by
\begin{equation*}
  \mathbb{Q}=\mathrm{span}\{1,\mathbf{i},\mathbf{j},\mathbf{k}\}=\{q_0+q_1\mathbf{i}+q_2\mathbf{j}+q_3\mathbf{k}:
  q_0,q_1,q_2,q_3\in\mathbb{R}\},
\end{equation*}
where $\{1,\mathbf{i},\mathbf{j},\mathbf{k}\}$ is a basis of $\mathbb{Q}$, and the imaginary units $\mathbf{i}$, $\mathbf{j}$ and $\mathbf{k}$ satisfy $\mathbf{i}^2=\mathbf{j}^2=\mathbf{k}^2=\mathbf{i}\mathbf{j}\mathbf{k}=-1$  (see \cite{h1844,h1866}).
Denote   the set of all $m\times n$ quaternion matrices by $\mathbb{Q}^{m\times n}$.
%%%%%%%%%%%%%%%%%%%%%%%%%
%is  of the form
% $\mathbf{q} =q_0+q_1\mathbf{i}+q_2\mathbf{j}+q_3\mathbf{k}$
%with $q_0, q_1, q_2, q_3\in\mathbb{R}$   and  three imaginary units $\mathbf{i}$, $\mathbf{j}$, $\mathbf{k}$ satisfying  $\mathbf{i}^2=\mathbf{j}^2=\mathbf{k}^2=\mathbf{i}\mathbf{j}\mathbf{k}=-1$. Denote $\mathbb{Q}$ as the quaternion-skew  field and $\mathbb{Q}^{m\times n}$ as the $m\times n$ matrices on $\mathbb{Q}$.
%%%%%%%%%%%%%%%%%%%%%%%%%
In this paper, we consider the quaternion linear systems of the form
\begin{equation}\label{1.1}
  \mathscr{A}\mathbf{x}=\mathbf{b},
\end{equation}
where $\mathscr{A}\in \mathbb{Q}^{n\times n}$ is an invertible non-Hermitian quaternion matrix,  $\mathbf{b}\in \mathbb{Q}^{n}$ is a known
quaternion vector, and $\mathbf{x}\in\mathbb{Q}^{n}$ is an unknown quaternion vector to be determined. Such quaternion linear systems have a wide range of applications in many problems in science and engineering, including quantum physics \cite{a1994},  computer graphics \cite{p1989,gcc2018}, quaternion convolutional neural network \cite{pml2020,sgr2017},   signal processing \cite{bm2004}, color image processing \cite{sv2011,jns2019,jnw2019,cjpp2021,sdn2021}, and so on.

\subsection{Existing methods}
Existing iterative methods for solving the quaternion linear systems include the complex method, the real method and the structure-preserving method.
The complex or real method is built upon the complex  or real
counterpart such that the dimension is two or four times the original.

\begin{itemize}
\item The complex method.  By a linear homeomorphic mapping from quaternion matrices/vectors to their complex counterparts $\mathcal{C}(\cdot )$, the quaternion linear systems (\ref{1.1}) can be rewritten equivalently  as the following complex
matrix equation
\begin{equation}\label{1.2}
\mathcal{C}(\mathscr{A}) \mathcal{C}(\mathbf{x}) = \mathcal{C}(\mathbf{b}),
\end{equation}
which can be solved by iterative methods, e.g., nested splitting conjugate gradient method \cite{zt2013}, HSS splitting method \cite{zwz2016},  Jacobi and Gauss-Seidel-type splitting method \cite{ttlx2017} and modified CG method \cite{hm2016}, etc.
\item The real method. The quaternion linear systems (\ref{1.1}) can be transformed into the real matrix equation
\begin{equation}\label{1.3}
\mathcal{R}(\mathscr{A}) \mathcal{R}(\mathbf{x}) = \mathcal{R}(\mathbf{b})
\end{equation}
by the linear homeomorphic mapping from quaternion matrices/vectors to their real counterparts $\mathcal{R}(\cdot )$. To solve the general real linear systems, Krylov subspace methods, such as GMRES \cite{s1981}, CGS \cite{s1989}, QMR \cite{fn1991} and conjugate gradient-type  methods \cite{l1952,ssz2009,ssy1988}, are feasible.
\end{itemize}

%\item The structure-preserving method.

It is noted that the computational work and storage requirements of the real and complex   methods increase rapidly with the increasing of the dimension. This may make the iterative methods for solving (\ref{1.2}) and (\ref{1.3})  less efficient.

\begin{itemize}
\item The structure-preserving method. Compared to the traditional iterative met-\\hods for solving the real/complex representations of the quaternion linear systems, the structure-preserving methods inheriting the algebraic symmetry of real representations do not need to expand the dimension.  For example, Jia and Ng \cite{jn2021} developed a structure-preserving quaternion generalized minimal residual (QGMRES) method for solving the quaternion linear systems (\ref{1.1}).  Li and Wang \cite{lw2023} proposed the structure-preserving quaternion full orthogonalization (QFOM) method and its preconditioned variant for approximating the solutions of  (\ref{1.1}).   Both QGMRES and QFOM are based on the quaternion Arnoldi procedure and preserve the upper Hessenberg structure in quaternion representation. Besides, the quaternion biconjugate gradient (QBiCG) method \cite{lw2024}, which depends on the quaternion Lanczos biorthogonalization process and preserves the quaternion tridiagonal form within the iterations, was proposed. Based on three-term recurrences, the QBiCG method requires less memory and has better performance compared with QGMRES and QFOM methods. In \cite{lwz2024}, the quaternion-coupled two-term biconjugate orthonormalization procedure was established and two different structure-preserving quaternion quasi-minimal residual (QQMR) methods were derived for the quaternion linear systems (\ref{1.1}).
\end{itemize}

\subsection{Motivation}
By making full use of the JRS-symmetry of the real counterpart, structure-preserving methods save three-quarters of the arithmetic operations in comparison with solving the real equivalent linear equation (\ref{1.3}). It is very interesting to further study efficient numerical algorithms for the quaternion linear systems under the structure-preserving framework.

It is well known that the conjugate gradient (CG) method has three properties: the finite termination property, the minimization property,  and the fact that the method is based on a three-term recurrence. These properties imply that the CG method terminates after a finite number of iterations in the absence of round-off errors, that some measure of the error decreases during the iteration, and that the amount of computation for each step is constant. When the coefficient matrix $\mathscr{A}$ is Hermitian and positive definite, CG method \cite{o2005} performs well. This inspires us to explore an iterative method for the general non-Hermitian  quaternion linear systems that enjoy all three nice properties of the CG algorithm. To maintain three recurrences, the most important thing is to establish the tridiagonalization procedure of the non-Hermitian quaternion matrices.

\subsection{Contributions}
The contributions of this paper are give below:
\begin{itemize}
\item Borrowing the idea of Saunders-Simon-Yip tridiagonalization for asymmetric matrices \cite{ssy1988}, we construct a tridiagonalization process for non-Hermitian quaternion matrices, called quaternion Saunders-Simon-Yip tridiagonalization, which is distinctively different from the quaternion Lanczos biorthogonalization for non-Hermitian matrices \cite{lw2024}.

\item  By using the new quaternion Saunders-Simon-Yip tridiagonalization process, we propose two structure-preserving conjugate gradient-type (QCG-type) methods for solving the non-Hermitian quaternion linear systems, one is based on the Galerkin condition, another is on the minimum residual condition. For the sake of convenience, we say these two methods to be QNHERLQ and  QNHERQR, respectively. The QNHERLQ and  QNHERQR methods depend on the stability of quaternion operations and the rapidity of real computations only performs real operations without dimension expanding.  The resulting tridiagonal quaternion linear system   (least-squares problem) in QNHERLQ (QNHERQR) is solved by the quaternion LQ (QR) factorization with the quaternion Givens transformations.

\item Following the ideas for the derivation of the algorithms SYMMLQ and MINRES for symmetric indefinite matrices \cite{ps1975}, we present the practical implementations of QNHERLQ and  QNHERQR.

\end{itemize}

Different from  existing methods in \cite{jn2021,lw2023,lw2024,lwz2024}, the proposed algorithm has the  following advantages:
\begin{itemize}
\item The proposed  quaternion Saunders-Simon-Yip tridiagonalization process can be generated by constructing the following transformation of $\mathscr{A}$:
 \begin{equation}\label{1.4}
\mathscr{P}^*\mathscr{A}\mathscr{Q}=\mathscr{T},
\end{equation}
where $\mathscr{P}$ and $\mathscr{Q}$ are unitary matrices and $\mathscr{T}$ is strictly tridiagonal quaternion matrix whose   off-diagonal elements are nonnegative real numbers.
The quaternion Saunders-Simon-Yip tridiagonalization process
is distinctively different from the quaternion Lanczos biorthogonalization process \cite{lw2024}, which is based on the similarity transformation
\begin{equation}\label{1.5}
\mathscr{V}^{-1}\mathscr{A}\mathscr{V}=\mathscr{S},
\end{equation}
$\mathscr{S}$ is a tridiagonal quaternion matrix whose sub-diagonal   elements are nonnegative real numbers. Whereas (\ref{1.5}) preserves the eigenvalues of $\mathscr{A}$, (\ref{1.4}) preserves the singular values of $\mathscr{A}$, which is very suitable for solving the quaternion linear systems.

\item The proposed quaternion Saunders-Simon-Yip tridiagonalization process is  quite different from quaternion coupled three-term biconjugate orthonormalization procedure \cite{lwz2024}, which is also based on the similarity transformation of $\mathscr{A}$:
\begin{equation}\label{1.6}
\mathscr{W}^*\mathscr{A}\mathscr{V}=\mathscr{D}\mathscr{H},
\end{equation}
$\mathscr{D}$ is an invertible diagonal quaternion matrix, $\mathscr{H}$ is a tridiagonal quaternion matrix whose  sub-diagonal elements are nonnegative real numbers, and $\mathscr{W}^*\mathscr{V}=\mathscr{D}$.

\item The resulting tridiagonal quaternion linear system   or least-squares problem in our proposed QNHERLQ and  QNHERQR methods is directly solved by the quaternion LQ or QR factorization with the quaternion Givens transformations. In contrast, the upper Hessenberg quaternion least-squares problem resulted in QGMRES method \cite{jn2021} is first transformed into an upper tridiagonal least-squares problem, and then solved by a back substitution method on the quaternion skew-field. The upper Hessenberg quaternion linear system resulted in the QFOM method \cite{lw2023} is reduced to an upper tridiagonal quaternion linear system, and then similarly solved by a back substitution method.

\item Compared with the classical conjugate gradient-type method, the QNHERLQ and  QNHERQR methods significantly save  storage requirements and computations by using the structure-preserving strategy.
\end{itemize}

\subsection{Organization}
The rest of this paper is organized as follows. In Section 2, we start with some definitions and notations and review some results about quaternion matrices. In Section 3, we present the structure-preserving quaternion Saunders-Simon-Yip tridiagonalization procedure based on the three-term recurrences, which preserves the quaternion strictly tridiagonal form during the iterations. In Section 4,  based on the Galerkin condition  and   the minimum residual condition, we propose the corresponding structure-preserving conjugate gradient-type (QCG-type) methods for solving the non-Hermitian quaternion linear systems. The convergence analysis is provided and selection of the initial vector is discussed. In Section 5, we give some theoretical results which indicate that it is always preferable to solve the original quaternion linear systems rather than equivalent real ones. In Section 6, some numerical examples are given to show the robustness and effectiveness of the proposed methods. Some concluding remarks are given in Section 7.

\section{Preliminaries}\label{sec10}

 In this section, we introduce some notations and recall basic information of quaternion algebra.
For a quaternion $\mathbf{q}=q_0+q_1\mathbf{i}+q_2\mathbf{j}+q_3\mathbf{k}\in\mathbb{Q}$, its conjugate    is defined as $\mathbf{q}^*=q_0-q_1\mathbf{i}-q_2\mathbf{j}-q_3\mathbf{k}$,
while the conjugate of the product satisfies $(\mathbf{p}\mathbf{q})^*=\mathbf{q}^*\mathbf{p}^*$.
The modulus of $\mathbf{q}=q_0+q_1\mathbf{i}+q_2\mathbf{j}+q_3\mathbf{k}\in\mathbb{Q}$ is defined
as $|\mathbf{q}|=\sqrt{\mathbf{q}\mathbf{q}^*}=\sqrt{q_0^2+q_1^2+q_2^2+q_3^2}$ and it holds
$|\mathbf{p}\mathbf{q}|=|\mathbf{p}||\mathbf{q}|$.
Each nonzero quaternion is invertible and the unique inverse is specified by $\mathbf{q}^{-1}=\mathbf{q}^*/|\mathbf{q}|^2$. For $\mathscr{X}=(\mathbf{x}_{ij}) \in\mathbb{Q}^{m\times n}$, let $\overline{\mathscr{X}}=(\mathbf{x}_{ij}^*)\in\mathbb{Q}^{m\times n}$ and $\mathscr{X}^*=(\mathbf{x}_{ji}^*)\in\mathbb{Q}^{n\times m}$ be the conjugate and conjugate transpose of $\mathscr{X}$, respectively. The unit quaternion matrix $\mathbf{I}$ is just as the classical unit matrix. For a square quaternion matrix $\mathscr{X}\in\mathbb{Q}^{n\times n}$, we say $\mathscr{X}$ is   unitary and Hermitian,   respectively, if $\mathscr{X}^*\mathscr{X} = \mathscr{X}\mathscr{X}^* = \mathbf{I}$ and $\mathscr{X}^*=\mathscr{X}$. Let $\mathscr{A}\in\mathbb{Q}^{n\times n}$ be Hermitian. We call $\mathscr{A}$ is positive definite if $\mathbf{x}^*\mathscr{A}\mathbf{x}>0$ for any $\mathbf{0}\neq\mathbf{x}\in\mathbb{Q}^{n}$. For a Hermitian and positive definite matrix $\mathscr{A}\in\mathbb{Q}^{n\times n}$, there exist  an unitary quaternion matrix $\mathscr{U}\in\mathbb{Q}^{n\times n}$ and a real diagonal matrix $\Sigma\in\mathbb{R}^{n\times n}$ with positive diagonal elements  such that
$\mathscr{A}=\mathscr{U}\Sigma\mathscr{U}^*$. Thus we define $\sqrt{\mathscr{A}}=\mathscr{U}\sqrt{\Sigma}\mathscr{U}^*$ for the Hermitian and positive definite matrix $\mathscr{A}$.
%$\mathscr{X}$ is called Hermitian if $\mathscr{X}^*=\mathscr{X}$.

The   inner product of two quaternion vectors $\mathbf{x}=(\mathbf{x}_i)\in\mathbb{Q}^n$ and $\mathbf{y}=(\mathbf{y}_i)\in\mathbb{Q}^n$  is defined as
\begin{equation*}
\langle\mathbf{x},\mathbf{y}\rangle=\sum\limits_{i=1}^n\mathbf{y}_i^*\mathbf{x}_i.
\end{equation*}
The norm of   $\mathbf{x}\in\mathbb{Q}^n$ generated by this inner product is of the form
\begin{equation*} \|\mathbf{x}\|_2=\sqrt{\sum\limits_{i=1}^n|\mathbf{x}_i|^2}.
\end{equation*}
From  \cite{gmp2013} it follows that $\mathbb{Q}^n$ is a right quaternionic Hilbert space with the above inner product and satisfies the following four properties:
\begin{itemize}
\item (Right linearity) $\langle \mathbf{x}\bm{\alpha}_1+\mathbf{y}\bm{\alpha}_2,\mathbf{z}\rangle =\langle\mathbf{x},\mathbf{z}\rangle\bm{\alpha}_1+\langle\mathbf{y},\mathbf{z}\rangle\bm{\alpha}_2$ for  $\mathbf{x},\mathbf{y},\mathbf{z}\in\mathbb{Q}^n$ and $\bm{\alpha}_1,\bm{\alpha}_2\in\mathbb{Q}$.
\item (Quaternionic hermiticity) $\langle \mathbf{x},\mathbf{y}\rangle=\langle \mathbf{y},\mathbf{x}\rangle^*$ for $\mathbf{x},\mathbf{y}\in\mathbb{Q}^n$.
\item (Positivity) $\langle \mathbf{x},\mathbf{x}\rangle\geq0$ for $\mathbf{x}\in\mathbb{Q}^n$ and $\mathbf{x}=\mathbf{0}$ if and only if $\langle \mathbf{x},\mathbf{x}\rangle=0$.
\item If $\mathbf{x},\mathbf{y}\in\mathbb{Q}^n$, then their distance is defined by $d(\mathbf{x},\mathbf{y})=\sqrt{\langle \mathbf{x}-\mathbf{y},\mathbf{x}-\mathbf{y}\rangle}$.
\end{itemize}

%%%%%%%%%%%%%%%%%%%%%%
For a quaternion matrix $\bm{\mathscr{M}}=M_0+M_1\mathbf{i}+M_2\mathbf{j}+M_3\mathbf{k}\in \mathbb{Q}^{m\times n}$, its real representation is given by
\begin{equation}\label{jrs}
  \mathcal{R}(\bm{\mathscr{M}})=\left[
  \begin{array}{cccc}
 M_0&M_2&M_1&M_3\\
  -M_2&M_0&M_3&-M_1\\
  -M_1&-M_3&M_0&M_2\\
  -M_3&M_1&-M_2&M_0
  \end{array}
  \right]\in\mathbb{R}^{4m\times 4n}.
\end{equation}
We denote its first block column by $\mathcal{R}(\bm{\mathscr{M}})_c=[M_0^T,-M_2^T,-M_1^T,-M_3^T]^T\in\mathbb{R}^{4m\times n}$.

Next, we give several important properties of  real counterparts of quaternion matrices.

\begin{definition} [\cite{jwzc2018}]\label{defn2.1}
Let
 \begin{equation*}
  J_m=\left[\begin{smallmatrix}
          0 & 0 & -I_m & 0 \\
          0 & 0 & 0 & -I_m \\
          I_m & 0 & 0 & 0 \\
          0 & I_m & 0 & 0 \\
        \end{smallmatrix}\right],
   R_m=\left[
       \begin{smallmatrix}
          0 & -I_m & 0 & 0 \\
          I_m & 0 & 0 & 0 \\
          0 & 0 & 0 & I_m \\
          0 & 0 & -I_m & 0 \\
       \end{smallmatrix}
      \right],
    S_m=\left[
       \begin{smallmatrix}
          0 & 0 & 0 & -I_m \\
          0 & 0 & I_m & 0 \\
          0 & -I_m & 0 & 0 \\
          I_m & 0 & 0 & 0 \\
       \end{smallmatrix}
      \right].
 \end{equation*}
$(1)$ A   matrix    $W\in\mathbb{R}^{4m\times 4n}$ is said to be JRS-symmetric if $J_mWJ_n^T=W$, $R_mWR_n^T=W$ and $S_mWS_n^T=W$.\\
$(2)$ If $m\leq n$, a matrix $Q\in\mathbb{R}^{4m\times 4n}$ is said to be JRS-symplectic if $QJ_n Q^T=J_m$, $QR_n Q^T=R_m$ and $QS_n Q^T=S_m$.\\
$(3)$  A   matrix    $W\in\mathbb{R}^{4n\times 4n}$ is said to be orthogonally JRS-symplectic if it is orthogonal and JRS-symplectic.
\end{definition}
%%%%%%%%%%%%%%%%%%%%%%%%%%
Obviously,   $W\in\mathbb{R}^{4m\times 4n}$ is JRS-symmetric if and only if $W$ is a real representation of a quaternion matrix, and the set of all JRS-symmetric matrices is closed under multiplication and addition.  Consequently, we define an inverse mapping   $\mathcal{R}^{-1}$ from the JRS-symmetric matrix to a quaternion matrix by $\mathcal{R}^{-1}( \mathcal{R}(\mathscr{M})) = \mathscr{M}$. A quaternion matrix $\mathscr{Q}\in\mathbb{Q}^{n\times n}$ is unitary if and only if $\mathcal{R}(\mathscr{Q})\in\mathbb{R}^{4n\times 4n}$ is orthogonally JRS-symplectic, see Lemma 2.1 and Theorem 2.1 in \cite{jwzc2018}.    It is noted that there are many different real representations, but they are permutation equivalent; see \cite[Remark 4.7]{jwzc2018}.

%%%%%%%%%%%%%%%
The $4n\times 4n$ generalized  symplectic Givens rotation is specified by
\begin{equation}\label{givens}
  G^{(l)}=
  \left[
  \begin{array}{cccc}
 G_0^{(l)}&G_2^{(l)}&G_1^{(l)}&G_3^{(l)}\\
  -G_2^{(l)}&G_0^{(l)}&G_3^{(l)}&-G_1^{(l)}\\
  -G_1^{(l)}&-G_3^{(l)}&G_0^{(l)}&G_2^{(l)}\\
  -G_3^{(l)}&G_1^{(l)}&-G_2^{(l)}&G_0^{(l)}
  \end{array}
  \right],
\end{equation}
where
\begin{eqnarray*}
% \nonumber to remove numbering (before each equation)
   && G_0^{(l)}=\left[
   \begin{array}{ccc}
  I_{l-1}&0&0\\
  0&\cos\alpha_0&0\\
     0&0&I_{n-l}\\
   \end{array}
   \right]\in\mathbb{R}^{n\times n},
   G_1^{(l)}=\left[
   \begin{array}{ccc}
0&0&0\\
  0&\cos\alpha_1&0\\
     0&0&0\\
   \end{array}
   \right]\in\mathbb{R}^{n\times n},\\
   &&  G_2^{(l)}=\left[
   \begin{array}{ccc}
 0&0&0\\
  0&\cos\alpha_2&0\\
     0&0&0\\
   \end{array}
   \right]\in\mathbb{R}^{n\times n},
   G_3^{(l)}=\left[
   \begin{array}{ccc}
 0&0&0\\
  0&\cos\alpha_3&0\\
     0&0&0\\
   \end{array}
   \right]\in\mathbb{R}^{n\times n},
\end{eqnarray*}
for $l=1,2,\cdots,n$, and $\cos^2\alpha_0+\cos^2\alpha_1^2+\cos^2\alpha_2+\cos^2\alpha_3=1$ with $\cos\alpha_0$, $\cos\alpha_1$, $\cos\alpha_2$, $\cos\alpha_3\in[-1,1]$. It is easy to check that all $ G^{(l)}$s are orthogonally JRS-symplectic matrices. Another kind of  orthogonally JRS-symplectic matrices are the direct sum of four identical $n\times n$ Householder matrices
\begin{equation}\label{heq}
  \mathscr{H}^{(l)}\oplus  \mathscr{H}^{(l)} \oplus \mathscr{H}^{(l)}\oplus  \mathscr{H}^{(l)}[v,\beta],
\end{equation}
where $v$ is an $n$-dimensional vector whose first $l-1$ elements are all equal to zero, and $\beta$ is a scalar such that $\beta(\beta v^Tv-2)=0$.

For a positive integer $m$, the quaternion Krylov subspace generated by the  coefficient matrix
$\mathscr{A}\in\mathbb{Q}^{n\times n}$ and a nonzero vector $\mathbf{v}\in\mathbb{Q}^n$  is defined by
 \begin{equation}\label{keq}
  \mathcal{K}_m(\mathscr{A},\mathbf{v})=\mathrm{span}(\mathbf{v},\mathscr{A}\mathbf{v},\cdots,
  \mathscr{A}^{m-1}\mathbf{v}),
 \end{equation}
whose element  is a  right-hand side  linear combination of $\mathbf{v}$, $\mathscr{A}\mathbf{v}$, $\cdots$,
$\mathscr{A}^{m-1}\mathbf{v}$. We comment here that  the above-mentioned right-hand side linear combination cannot be rewritten as the multiplication of a polynomial with degree less than $m-1$ of $\mathscr{A}$ and
the vector $\mathbf{v}$ since the multiplication of the quaternion does not satisfy the commutativity.
For the sake of convenience, the right-hand side linear combination of $\mathbf{v}$, $\mathscr{A}\mathbf{v}$, $\cdots$, $\mathscr{A}^{m-1}\mathbf{v}$ is denoted by
\begin{equation}\label{lc}
  L_m(\mathscr{A},\mathbf{v})=\mathbf{v} \mathbf{a}_0+\mathscr{A}\mathbf{v}\mathbf{a}_1+\cdots+\mathscr{A}^{m-1}\mathbf{v}\mathbf{a}_{m-1},
\end{equation}
where $\mathbf{a}_0$, $\mathbf{a}_1$, $\cdots$, $\mathbf{a}_{m-1}$  are quaternion scalars.

Finally, we review the Givens transformation of quaternion matrices \cite{jo2003}. From \cite[Theorem 3.4]{jo2003}, for a nonzero  vector $\mathbf{x}=[\mathbf{x}_1,\mathbf{x}_2]^T\in\mathbb{Q}^2$, we define
\begin{equation}\label{gm1}
 \widehat{ \mathscr{G}}=
  \left[
  \begin{array}{cc}
  \bar{\mathbf{c}}&\mathbf{s}\\
  -\bar{\mathbf{s}}&\mathbf{c}
  \end{array}
  \right],~\text{with}~
  \mathbf{s}=-\bm{\sigma}\dfrac{\bar{\mathbf{x}}_2}{\|\mathbf{x}\|_2},
  \mathbf{c}=\bm{\sigma}\dfrac{\bar{\mathbf{x}}_1}{\|\mathbf{x}\|_2},|\bm{\sigma}|=1,
\end{equation}
where $\bm{\sigma}$ is arbitrary for the case that  $\mathbf{x}_1$, $\mathbf{x}_2$ are linearly dependent over $\mathbb{R}$, otherwise $\bm{\sigma}=\dfrac{a_1\mathbf{x}_1+a_2\mathbf{x}_2}{|a_1\mathbf{x}_1+a_2\mathbf{x}_2|}\in\mathbb{Q}$ with nonzero vector $[a_1,a_2]^T\in\mathbb{R}^2$. Then $\bar{\mathscr{G}}$ is a unitary matrix and $\widehat{\mathscr{G}}^*\mathbf{x}=\bm{\sigma}[\|\mathbf{x}\|_2,0]^T$. Especially, let $\bm{\sigma}=\dfrac{\mathbf{x}_1}{|\mathbf{x}_1|}$ and
\begin{equation}\label{gm2}
  \mathscr{G} =
  \left[
  \begin{array}{cc}
 c&\mathbf{s}\\
  -\bar{\mathbf{s}}&c
  \end{array}
  \right],~\text{with}~
  \mathbf{s}=\dfrac{\mathbf{x}_1}{|\mathbf{x}_1|}
  \dfrac{\bar{\mathbf{x}}_2}{\|\mathbf{x}\|_2},~
 c=\dfrac{|\mathbf{x}_1|}{\|\mathbf{x}\|_2}.
\end{equation}
Then
\begin{equation}\label{gm3}
  \mathscr{G}\left[
  \begin{array}{c}
  \mathbf{x}_1\\
  \mathbf{x}_2
  \end{array}\right]=\left[
  \begin{array}{c}
  \bm{\xi}\\
  0
  \end{array}
  \right],~
  \text{with}~\bm{\xi}=\dfrac{\mathbf{x}_1}{|\mathbf{x}_1|}\|\mathbf{x}\|_2.
\end{equation}
Obviously, the quaternion Givens matrix (\ref{gm2}) is the natural generalization of the real or complex Givens matrix.
%%%%%%%%%%%%%%%
Similarly, for a nonzero quaternion vector $\mathbf{y}=[\mathbf{y}_1,\mathbf{y}_2]\in\mathbb{Q}^2$, we have
\begin{equation}\label{gm4}
 \left[
  \begin{array}{cc}
  \mathbf{y}_1& \mathbf{y}_2
  \end{array}\right] \mathscr{G}'=\left[
  \begin{array}{cc}
  \bm{\xi}&
  0
  \end{array}
  \right],~
  \text{with}~\bm{\xi}=\dfrac{\mathbf{y}_1}{|\mathbf{y}_1|}\|\mathbf{y}\|_2,
\end{equation}
where
\begin{equation}\label{gm5}
\mathscr{G}' =
  \left[
  \begin{array}{cc}
 c'&\mathbf{s}'\\
  \bar{\mathbf{s}}'&-c'
  \end{array}
  \right],~\text{with}~
  \mathbf{s}'=\dfrac{\bar{\mathbf{y}}_1}{|\mathbf{y}_1|}
  \dfrac{\mathbf{y}_2}{\|\mathbf{y}\|_2},~
 c'=\dfrac{|\mathbf{y}_1|}{\|\mathbf{y}\|_2}.
\end{equation}

\section{Quaternion Saunders-Simon-Yip tridiagonalization }
In this section, we derive the quaternion Saunders-Simon-Yip tridiagonalization procedure
from employing three-term recursions.  We first give the following definitions and results.%First, we recall some useful results.

\begin{definition} [\cite{jwzc2018}]
Let $M\in\mathbb{R}^{4n\times 4n}$ be a JRS-symmetric matrix with the block form {\rm(\ref{jrs})}. If $M_0\in\mathbb{R}^{n\times n}$ is an upper Hessenberg matrix, and $M_1,M_2,M_3\in\mathbb{R}^{n\times n}$ are  upper triangular matrices, then  $M\in\mathbb{R}^{4n\times 4n}$ is called an upper JRS-Hessenberg matrix.
\end{definition}

\begin{definition}\label{defn2.1h}
Let $M\in\mathbb{R}^{4n\times 4n}$ be a JRS-symmetric matrix with the block form {\rm(\ref{jrs})}. If $M_0\in\mathbb{R}^{n\times n}$ and $M_1, M_2,M_3\in\mathbb{R}^{n\times n}$ are a tridiagonal matrix and diagonal matrices, respectively, then  $M\in\mathbb{R}^{4n\times 4n}$ is called a strictly JRS-tridiagonal matrix.
\end{definition}

\begin{proposition}[\cite{jn2021}]\label{thm1}
Let $M\in\mathbb{R}^{4n\times 4n}$ be a JRS-symmetric matrix with the block form  {\rm(\ref{jrs})}. Then there exists an orthogonally JRS-symplectic matrix $W\in\mathbb{R}^{4n\times 4n}$ such that $W^TMW=H$ is an upper JRS-Hessenberg
matrix.
\end{proposition}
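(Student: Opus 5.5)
We propose to build $W$ as a product of orthogonally JRS-symplectic matrices of the two kinds introduced in Section~\ref{sec10}: the generalized symplectic Givens rotations $G^{(l)}$ in (\ref{givens}) and the block-diagonal Householder matrices (\ref{heq}). The reduction goes column by column, in the spirit of the classical Householder reduction to Hessenberg form. The set of JRS-symmetric matrices is closed under products, and products and transposes of orthogonally JRS-symplectic matrices stay orthogonally JRS-symplectic. Hence every intermediate matrix $W_k^TMW_k$ remains JRS-symmetric, i.e.\ a real representation $\mathcal{R}(\mathscr{M}_k)$. It therefore suffices to track only its first block column $[M_0^T,-M_2^T,-M_1^T,-M_3^T]^T$; the remaining block columns are determined by the pattern (\ref{jrs}).

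At step $k=1,\dots,n-2$ we must annihilate the entries in rows $k+2,\dots,n$ of column $k$ of $M_0$, and in rows $k+1,\dots,n$ of column $k$ of $M_1,M_2,M_3$. We proceed in three sub-steps.

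First, for each $l=k+1,\dots,n$, apply a rotation $G^{(l)}$ with angles chosen from the four real numbers $(M_0)_{lk},(M_1)_{lk},(M_2)_{lk},(M_3)_{lk}$, normalized to unit length. In quaternion language this multiplies the $l$-th row by a unit quaternion, i.e.\ left multiplication by $\mathrm{diag}(1,\dots,\bar{\mathbf{q}}_l,\dots,1)$ with $\mathbf{q}_l=\mathbf{m}_{lk}/|\mathbf{m}_{lk}|$. After this, entry $(l,k)$ becomes the real number $|\mathbf{m}_{lk}|$, so the imaginary parts $(M_i)_{lk}$, $i=1,2,3$, vanish.

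Second, apply $H\oplus H\oplus H\oplus H$ with a real Householder $H=\mathscr{H}^{(k+1)}$ chosen to map the now real vector $((M_0)_{k+1,k},\dots,(M_0)_{nk})^T$ to a multiple of $e_1$. Since this vector is real and the four blocks receive the same $H$, this zeros rows $k+2,\dots,n$ of column $k$ in all four blocks simultaneously.

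Third, every transformation is applied as a similarity: $W_j^T(\cdot)W_j$. Each of these matrices acts as the identity on indices $1,\dots,k$, that is, on positions $1,\dots,k$ within each of the four blocks. Right multiplication therefore leaves columns $1,\dots,k$ of each block untouched, and the zeros already created persist.

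The main point needing care is the bookkeeping in the first sub-step. We must verify that the rotation angles can be chosen so that $G^{(l)T}$, acting on the $4$-vector of $l$-th row entries across the four blocks, produces $(|\mathbf{m}_{lk}|,0,0,0)$ with the correct signs. This requires matching the sign convention of (\ref{jrs}) against that of (\ref{givens}). We would do this by identifying $G^{(l)}$ with $\mathcal{R}$ of the diagonal quaternion matrix whose $l$-th entry is $\cos\alpha_0+\cos\alpha_1\mathbf{i}+\cos\alpha_2\mathbf{j}+\cos\alpha_3\mathbf{k}$, and then invoking the multiplicativity $\mathcal{R}(\mathscr{X}\mathscr{Y})=\mathcal{R}(\mathscr{X})\mathcal{R}(\mathscr{Y})$.

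If $|\mathbf{m}_{lk}|=0$ we take $G^{(l)}=I$. After $n-2$ steps, $M_0$ is upper Hessenberg and $M_1,M_2,M_3$ are upper triangular. As an optional final touch, a diagonal unitary step makes the subdiagonal of $M_0$ real; this is not required by the statement. Setting $W=\prod W_j$ gives $W^TMW=H$ upper JRS-Hessenberg.
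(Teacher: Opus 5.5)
The paper does not prove this proposition itself; it quotes it from \cite{jn2021}. Your route is the standard structure-preserving Hessenberg reduction: first generalized symplectic Givens rotations make the subdiagonal part of column $k$ real, then one real block Householder $H\oplus H\oplus H\oplus H$ is applied. The sub-steps are sound. Indeed $G^{(l)}=\mathcal{R}(\mathrm{diag}(1,\dots,1,\mathbf{g},1,\dots,1))$ with $\mathbf{g}=\cos\alpha_0+\cos\alpha_1\mathbf{i}+\cos\alpha_2\mathbf{j}+\cos\alpha_3\mathbf{k}$. The representation (\ref{jrs}) satisfies $\mathcal{R}(\mathscr{X}\mathscr{Y})=\mathcal{R}(\mathscr{X})\mathcal{R}(\mathscr{Y})$ and $\mathcal{R}(\mathscr{X}^*)=\mathcal{R}(\mathscr{X})^T$. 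So taking $\cos\alpha_t=(M_t)_{lk}/|\mathbf{m}_{lk}|$ in $G^{(l)T}(\cdot)G^{(l)}$ does exactly what you want, and the sign check you worried about goes through.

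The gap is at the end. Your loop stops at $k=n-2$, you then claim $M_1,M_2,M_3$ are upper triangular, and you call the remaining diagonal step optional. That is false: the quaternion entry in position $(n,n-1)$ is never made real.
\begin{itemize}
\item At step $k=n-2$, the similarities with $G^{(n-1)}$, $G^{(n)}$ and with the Householder acting on indices $n-1,n$ leave this entry a generic quaternion.
\item For $n=2$ your loop is empty, so $\mathbf{m}_{21}$ is simply the original, arbitrary entry.
\end{itemize}
Hence $(M_t)_{n,n-1}\neq 0$ in general for $t=1,2,3$. Upper triangularity of $M_1,M_2,M_3$ in the definition of an upper JRS-Hessenberg matrix means precisely that \emph{every} subdiagonal quaternion entry is real, including the last one. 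Also, ``making the subdiagonal of $M_0$ real'' is vacuous, since $M_0$ is a real matrix.

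The repair is one more similarity $G^{(n)T}(\cdot)G^{(n)}$ with $\cos\alpha_t=(M_t)_{n,n-1}/|\mathbf{m}_{n,n-1}|$; equivalently, run your first sub-step also for $k=n-1$. Its right factor touches only column $n$. Its left factor only rescales row $n$, which is already zero in columns $1,\dots,n-2$ in all four blocks. So nothing created earlier is destroyed.

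While fixing this, also state the left-multiplication half of the bookkeeping, which your third sub-step omits. At step $k$ the left factors only combine rows $k+1,\dots,n$. These rows vanish in columns $1,\dots,k-1$ in all four blocks. Mixing them, whether by real Householder coefficients or by unit-quaternion row scalings, therefore preserves the zeros created at earlier steps.
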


The following theorem shows that a JRS-symmetric matrix can be transformed into a strictly JRS-tridiagonal matrix by using orthogonally JRS-symplectic matrices.

\begin{theorem}\label{thm2}
Let $M\in\mathbb{R}^{4n\times 4n}$ be a JRS-symmetric matrix with the block form  {\rm(\ref{jrs})}. Then there exist orthogonally JRS-symplectic matrices $P, Q\in\mathbb{R}^{4n\times 4n}$ such that
\begin{equation*}
	  P^TMQ=T=
	\left[
	\begin{array}{cccc}
		T_0&T_2&T_1&T_3\\
		-T_2&T_0&T_3&-T_1\\
		-T_1&-T_3&T_0&T_2\\
		-T_3&T_1&-T_2&T_0
	\end{array}
	\right]\in\mathbb{R}^{4n\times 4n}
\end{equation*}
is a strictly JRS-tridiagonal matrix, where off-diagonal elements of the tridiagonal $T_0$ are nonnegative real numbers.
\end{theorem}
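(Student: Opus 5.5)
The plan is to work entirely at the quaternion level and then transport the result back through $\mathcal{R}$. Write $M=\mathcal{R}(\mathscr{M})$ with $\mathscr{M}\in\mathbb{Q}^{n\times n}$. It suffices to produce unitary quaternion matrices $\mathscr{P},\mathscr{Q}$ with $\mathscr{P}^*\mathscr{M}\mathscr{Q}=\mathscr{T}$ tridiagonal, having nonnegative real off-diagonal entries. The reasons are the facts recalled after Definition~\ref{defn2.1}. $\mathcal{R}$ is multiplicative and satisfies $\mathcal{R}(\mathscr{X}^*)=\mathcal{R}(\mathscr{X})^T$. Unitary quaternion matrices map to orthogonally JRS-symplectic matrices. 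A quaternion tridiagonal matrix with real off-diagonal entries $T=\mathcal{R}(\mathscr{T})$ has $T_0$ tridiagonal and $T_1,T_2,T_3$ diagonal, since only the diagonal entries carry imaginary parts. Thus $T$ is strictly JRS-tridiagonal with nonnegative off-diagonals in $T_0$, and we take $P=\mathcal{R}(\mathscr{P})$, $Q=\mathcal{R}(\mathscr{Q})$.

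To construct $\mathscr{P},\mathscr{Q}$, I would run a quaternion Saunders--Simon--Yip recursion. Choose any unit vectors $\mathbf{p}_1,\mathbf{q}_1$, say $e_1$, and set $\beta_1=\gamma_1=0$. For $k=1,2,\dots$ compute
\[
\bm{\alpha}_k=\mathbf{p}_k^*\mathscr{M}\mathbf{q}_k,
\]
\[
\mathbf{p}_{k+1}\beta_{k+1}=\mathscr{M}\mathbf{q}_k-\mathbf{p}_k\bm{\alpha}_k-\mathbf{p}_{k-1}\gamma_k,
\]
\[
\mathbf{q}_{k+1}\gamma_{k+1}=\mathscr{M}^*\mathbf{p}_k-\mathbf{q}_k\bm{\alpha}_k^*-\mathbf{q}_{k-1}\beta_k.
\]
Here $\beta_{k+1},\gamma_{k+1}\ge 0$ are the norms of the right-hand sides, so scaling is by real scalars and causes no commutativity issues. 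By induction, using right linearity and quaternionic hermiticity of the inner product, I would show that $\{\mathbf{p}_j\}$ and $\{\mathbf{q}_j\}$ are each orthonormal. In matrix form this gives $\mathscr{M}\mathscr{Q}_k=\mathscr{P}_k\mathscr{T}_k+\beta_{k+1}\mathbf{p}_{k+1}e_k^T$ and $\mathscr{M}^*\mathscr{P}_k=\mathscr{Q}_k\mathscr{T}_k^*+\gamma_{k+1}\mathbf{q}_{k+1}e_k^T$. From these, $\mathscr{P}^*\mathscr{M}\mathscr{Q}$ has $(i,j)$ entry zero unless $|i-j|\le1$, with real sub- and superdiagonal entries $\beta_{j+1}$ and $\gamma_{j+1}$. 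The key inner-product computation is $\mathbf{p}_i^*\mathscr{M}\mathbf{q}_j=(\mathbf{q}_j^*\mathscr{M}^*\mathbf{p}_i)^*$, combined with the expansion of $\mathscr{M}^*\mathbf{p}_i$ in $\mathbf{q}_{i-1},\mathbf{q}_i,\mathbf{q}_{i+1}$; this is where the reality of $\beta,\gamma$ matters.

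The main obstacle is breakdown, when $\beta_{k+1}=0$ or $\gamma_{k+1}=0$ before $k=n$; the two sequences may also terminate at different steps. To handle this I would restart. When $\beta_{k+1}=0$, pick $\mathbf{p}_{k+1}$ as any unit vector orthogonal to the current $\mathbf{p}$'s (Gram--Schmidt with right quaternion coefficients) and set the coupling to zero; do the same for $\mathbf{q}$. Then I would check that the orthogonality proof still goes through. A cleaner fallback avoids the issue entirely. Apply the real (complex) SSY/Lanczos bidiagonalization idea structurally: the quaternion Golub--Kahan bidiagonalization, obtained by alternating left and right Householder-type reflections of form~(\ref{heq}) together with the Givens rotations~(\ref{givens}), yields unitary $\mathscr{P},\mathscr{Q}$ with $\mathscr{P}^*\mathscr{M}\mathscr{Q}=\mathscr{B}$ upper bidiagonal. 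A bidiagonal matrix is already tridiagonal. Finally, I would make off-diagonal entries real and nonnegative with a diagonal unitary scaling $\mathrm{diag}(\bm{\sigma}_i)$ applied successively from the left and right. Each superdiagonal entry $\mathbf{b}_{i,i+1}$ can be rotated to $|\mathbf{b}_{i,i+1}|$ by multiplying column $i+1$ on the right by a unit quaternion. This scaling leaves the zero pattern intact and only changes the diagonal quaternions, which are allowed to be arbitrary, so no further constraint arises. I regard this normalization-plus-breakdown bookkeeping, rather than the orthogonality induction, as the only delicate point.
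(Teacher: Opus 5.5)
Your proof is correct, but it takes a different route from the paper. The paper stays in the real representation and argues by induction on $n$:
\begin{enumerate}
\item it reduces $M$ to upper JRS-Hessenberg form (Proposition~\ref{thm1});
\item it applies generalized symplectic Givens rotations and a four-fold real Householder matrix from the right, so that the first row is zero beyond a real nonnegative $(1,2)$ entry;
\item it fixes the sign of the $(2,1)$ entry by a diagonal scaling from the left;
\item it applies the induction hypothesis to the trailing JRS-symmetric block.
\end{enumerate}
You instead build unitary $\mathscr{P},\mathscr{Q}$ at the quaternion level and transport them through $\mathcal{R}$. This is not circular, since neither of your constructions uses Theorem~\ref{thm3}.

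The restart bookkeeping in your first route does go through. If $\beta_{k+1}=0$, then $\mathscr{M}\mathbf{q}_k\in\mathrm{span}(\mathbf{p}_{k-1},\mathbf{p}_k)$. So any new unit $\mathbf{p}_{k+1}$ orthogonal to the old ones gives $\mathbf{q}_k^*\mathscr{M}^*\mathbf{p}_{k+1}=0$, which is exactly what the $\mathbf{q}$-recurrence with $\beta_{k+1}=0$ requires. The case $\gamma_{k+1}=0$ is symmetric. At $k=n$ the remainder vanishes, because it is orthogonal to an orthonormal basis of $\mathbb{Q}^n$.

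That route buys a stronger result: $\mathbf{p}_1,\mathbf{q}_1$ can be prescribed arbitrarily, which is what the algorithms actually use. It also avoids a point the paper's induction leaves implicit. The transformations applied to the trailing block must fix $\mathbf{e}_1$; otherwise they destroy the already-reduced first row and column.

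Your bidiagonalization route is the most elementary and is complete on its own. However, it shows that the statement as written is weak. The quaternion SVD $\mathscr{U}^*\mathscr{M}\mathscr{V}=\Sigma$, which the paper itself uses in Theorem~\ref{initial}, already gives $T_0=\Sigma$ and $T_1=T_2=T_3=0$. That route also gives no control over the first columns.

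In a write-up, commit to one route rather than presenting the second as a fallback. The bidiagonal one suffices for the theorem as stated; the recursion is the one that connects to the algorithm.
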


The proof of Theorem \ref{thm2}  can be found in the Appendix.

\begin{theorem}\label{thm3}
Let $\mathscr{A}=A_0+A_1\mathbf{i}+A_2\mathbf{j}+A_3\mathbf{k}\in\mathbb{Q}^{n\times n}$. Then there exist two unitary quaternion  matrices $\mathscr{P}\in\mathbb{Q}^{n\times n}$ and $\mathscr{Q}\in\mathbb{Q}^{n\times n}$ such that
\begin{equation}\label{eq6}
\bm{\mathscr{P}}^*\mathscr{A}\mathscr{Q}=\mathscr{T},
\end{equation}
where
\begin{equation}\label{eq7}
\mathscr{T}=\left[\begin{array}{ccccc}
\bm{\alpha}_1&\gamma_1&&&\\
\beta_1&\bm{\alpha}_2&\gamma_2&&\\
&\ddots&\ddots&\ddots&\\
&&\beta_{n-2}&\bm{\alpha}_{n-1}&\gamma_{n-1}\\
&&&\beta_{n-1}&\bm{\alpha}_n
\end{array}
\right]\in\mathbb{Q}^{n\times n}
\end{equation}
is a tridiagonal quaternion matrix whose off-diagonal elements are nonnegative real numbers, which is called a strict tridiagonal quaternion matrix.
\end{theorem}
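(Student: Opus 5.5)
The plan is to reduce Theorem \ref{thm3} to Theorem \ref{thm2} via the real representation $\mathcal{R}$, and then transport the result back to $\mathbb{Q}^{n\times n}$ with $\mathcal{R}^{-1}$. First I set $M=\mathcal{R}(\mathscr{A})\in\mathbb{R}^{4n\times 4n}$. By (\ref{jrs}), $M$ has exactly the JRS block form with blocks $A_0,A_1,A_2,A_3$, so $M$ is JRS-symmetric. Theorem \ref{thm2} then gives orthogonally JRS-symplectic matrices $P,Q$ such that $P^TMQ=T$ is strictly JRS-tridiagonal, with the off-diagonal entries of $T_0$ nonnegative.

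Next I pass back to quaternions. Because $P$ and $Q$ are orthogonal and JRS-symplectic, they are in particular JRS-symmetric. This holds because JRS-symplecticity plus orthogonality gives $PJ_nP^T=J_n$, hence $J_nPJ_n^T=P$, and similarly for $R_n$ and $S_n$. Therefore $\mathscr{P}:=\mathcal{R}^{-1}(P)$ and $\mathscr{Q}:=\mathcal{R}^{-1}(Q)$ are well defined. By the cited characterization (Lemma 2.1 and Theorem 2.1 in \cite{jwzc2018}), they are unitary quaternion matrices. I then use the standard homomorphism properties of $\mathcal{R}$: $\mathcal{R}(\mathscr{X}\mathscr{Y})=\mathcal{R}(\mathscr{X})\mathcal{R}(\mathscr{Y})$, and $\mathcal{R}(\mathscr{X}^*)=\mathcal{R}(\mathscr{X})^T$. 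The second follows from (\ref{jrs}), since transposing swaps the sign pattern exactly as quaternion conjugation does. Together these give $\mathcal{R}(\mathscr{P}^*\mathscr{A}\mathscr{Q})=P^TMQ=T$. Injectivity of $\mathcal{R}$ then yields $\mathscr{P}^*\mathscr{A}\mathscr{Q}=\mathscr{T}:=T_0+T_1\mathbf{i}+T_2\mathbf{j}+T_3\mathbf{k}$.

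Finally I read off the structure of $\mathscr{T}$. The entry $\mathscr{T}_{pq}$ has components $(T_0)_{pq},(T_1)_{pq},(T_2)_{pq},(T_3)_{pq}$. For $|p-q|\ge2$ all four vanish, since $T_0$ is tridiagonal and $T_1,T_2,T_3$ are diagonal. For $|p-q|=1$ only $(T_0)_{pq}$ can be nonzero, so these entries are real, and they are nonnegative by Theorem \ref{thm2}. This gives the form (\ref{eq7}), with $\beta_i,\gamma_i\ge 0$ real and quaternion diagonal entries $\bm{\alpha}_i$.

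The main obstacle is whether the reader accepts the homomorphism identity $\mathcal{R}(\mathscr{X}^*)=\mathcal{R}(\mathscr{X})^T$, together with the equivalence between unitary $\mathscr{Q}$ and orthogonally JRS-symplectic $\mathcal{R}(\mathscr{Q})$. I would verify the transpose identity directly from (\ref{jrs}) and cite \cite{jwzc2018} for the rest. The substantive work sits in Theorem \ref{thm2}, which is assumed.
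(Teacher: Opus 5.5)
Your proposal is correct and follows the paper's proof: apply Theorem~\ref{thm2} to $\mathcal{R}(\mathscr{A})$ and pull $P^T\mathcal{R}(\mathscr{A})Q=T$ back to $\mathscr{P}^*\mathscr{A}\mathscr{Q}=\mathscr{T}$ via $\mathcal{R}^{-1}$. You also supply details the paper leaves implicit, and they all check out: orthogonal JRS-symplectic $P,Q$ are JRS-symmetric, so $\mathcal{R}^{-1}$ applies; $\mathcal{R}$ is multiplicative with $\mathcal{R}(\mathscr{X}^*)=\mathcal{R}(\mathscr{X})^T$; and diagonal $T_1,T_2,T_3$ force real off-diagonal entries in $\mathscr{T}$.
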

\begin{proof}
According to Theorem \ref{thm2}, there exist
orthogonally JRS-symplectic matrices $P,Q\in\mathbb{R}^{4n\times 4n}$ such that
\begin{equation}\label{eq8}
P^T\mathcal{R}(\mathscr{A})Q=T=
  \left[
  \begin{array}{cccc}
  T_0&T_2&T_1&T_3\\
  -T_2&T_0&T_3&-T_1\\
  -T_1&-T_3&T_0&T_2\\
  -T_3&T_1&-T_2&T_0
  \end{array}
  \right]
\end{equation}
is a strictly JRS-tridiagonal matrix, where the off-diagonal elements of $T_0\in\mathbb{R}^{n\times n}$  and $T_1, T_2,T_3\in\mathbb{R}^{n\times n}$ are nonnegative real numbers and zeros, respectively. Taking the inverse homeomorphic mapping $\mathcal{R}^{-1}$ on both sides of the last equation yields
\begin{equation*}
\mathscr{P}^*\mathscr{A}\mathscr{Q}=\mathscr{T},
\end{equation*}
where $\mathscr{P}=\mathcal{R}^{-1}(P)$,  $\mathscr{Q}=\mathcal{R}^{-1}(Q)$ and $\mathscr{T}=\mathcal{R}^{-1}(T)$. This yields the desired result.
\end{proof}

By the proof of Theorem \ref{thm3}, it is known that  JRS-symmetry of $\mathcal{R}(\mathscr{A})$ is preserved. Consequently,  the decomposition in (\ref{eq8}) is called the structure-preserving of four real matrices
$A_0, A_1, A_2, A_3 \in\mathbb{R}^{n\times n}$. This also means that we don't need to produce the explicit real counterparts  (the $4\times4$ real block matrices)  of quaternion matrices, just need to generate and store the first column blocks of real representations, which greatly reduces the storage and computation.

Rewriting (\ref{eq6}) yields
\begin{equation}\label{eq9}
\mathscr{A}\mathscr{Q}=\mathscr{P}\mathscr{T},\mathscr{A}^*\mathscr{P}=\mathscr{Q}\mathscr{T}^*.
\end{equation}
It is easy to see that if we fix the first column vectors of $\mathscr{P}$ and $\mathscr{Q}$ and the off-diagonal elements of $\mathscr{T}$ to be positive, then the transformation is uniquely determined.

By $\mathbf{q}_k$ and $\mathbf{p}_k$ we denote the $k$th column of $\mathscr{Q}$ and $\mathscr{P}$, respectively. From (\ref{eq9}), we can deduce the following recursion formulas: %for  and  $\bm{\alpha}_k$, $\beta_k$, $\gamma_k$. Given $\mathbf{p}_1$ and $\mathbf{q}_1$, we see that
\begin{eqnarray*}
% \nonumber to remove numbering (before each equation)
   &&\bm{\alpha}_1=\langle\mathscr{A}\mathbf{q}_1, \mathbf{p}_1\rangle, \widetilde{\mathbf{p}}_2=\mathscr{A}\mathbf{q}_1-\mathbf{p}_1\bm{\alpha}_1,
   \widetilde{\mathbf{q}}_2=\mathscr{A}^*\mathbf{p}_1-\mathbf{q}_1\bm{\alpha}_1^*\\
   &&
   \Rightarrow \beta_1=\|\widetilde{\mathbf{p}}_2\|_2, \mathbf{p}_2=\widetilde{\mathbf{p}}_2/\beta_1,
 \gamma_1=\|\widetilde{\mathbf{q}}_2\|_2, \mathbf{q}_2=\widetilde{\mathbf{q}}_2/\gamma_1,
\end{eqnarray*}
and for $k\geq 2$ with known $\mathbf{q}_1,\cdots,\mathbf{q}_k$, $\mathbf{p}_1,\cdots,\mathbf{p}_k$, $\bm{\alpha}_1$, $\cdots$, $\bm{\alpha}_{k-1}$, $\beta_1$, $\cdots$, $\beta_{k-1}$, $\gamma_1$, $\cdots$, $\gamma_{k-1}$:
\begin{eqnarray*}
% \nonumber to remove numbering (before each equation)
   &&\alpha_k=\langle \mathscr{A}\mathbf{q}_k,\mathbf{p}_k\rangle, \widetilde{\mathbf{p}}_{k+1}=\mathscr{A}\mathbf{q}_k-\mathbf{p}_k\bm{\alpha}_k-\mathbf{p}_{k-1}\gamma_{k-1},
   \widetilde{\mathbf{q}}_{k+1}=\mathscr{A}^*\mathbf{p}_k-\mathbf{q}_k\bm{\alpha}_k^*-\mathbf{q}_{k-1}\beta_{k-1}\\
   &&
   \Rightarrow \beta_k=\|\widetilde{\mathbf{p}}_{k+1}\|_2, \mathbf{p}_{k+1}=\widetilde{\mathbf{p}}_{k+1}/\beta_k,
 \gamma_k=\|\widetilde{\mathbf{q}}_{k+1}\|_2, \mathbf{q}_{k+1}=\widetilde{\mathbf{q}}_{k+1}/\gamma_k.
\end{eqnarray*}

Hence we may get the following quaternion Saunders-Simon-Yip tridiagonalization algorithm.

\begin{algorithm}[!ht]
\caption{Quaternion Saunders-Simon-Yip tridiagonalization procedure}
\label{alg3.1}
\begin{algorithmic}[1]
\State Choose two arbitrary quaternion vectors $\mathbf{b}\neq\mathbf{0}$ and $\mathbf{c}\neq\mathbf{0}$.
\State Set $\mathbf{p}_0=\mathbf{q}_0=\mathbf{0}$, $\beta_0=\|\mathbf{b}\|_2$, $\gamma_0=\|\mathbf{c}\|_2$, and  $\mathbf{p}_1=\mathbf{b}/\beta_0$, $\mathbf{q}_1=\mathbf{c}/\gamma_0$.
\State \textbf{for} $i=1,\cdots,m$ \textbf{do}
\State \quad $\bm{\alpha}_{i}=\langle\mathscr{A} \mathbf{q}_i,\mathbf{p}_i\rangle$;
\State \quad  $\widetilde{\mathbf{p}}=\mathscr{A} \mathbf{q}_i-\mathbf{p}_{i}\bm{\alpha}_{i}-\mathbf{p}_{i-1}\gamma_{i-1}$;
\State \quad $\widetilde{\mathbf{q}}=\mathscr{A}^*\mathbf{p}_i-\mathbf{q}_i\bm{\alpha}_i^*-\mathbf{q}_{i-1}\beta_{i-1}$;
\State \quad $\beta_i=\|\widetilde{\mathbf{p}}\|_2$;
\State \quad   $\gamma_i=\|\widetilde{\mathbf{q}}\|_2$;
\State \quad \textbf{if} $\beta_{i}=0$ or $\gamma_i=0$, \textbf{then} stop
\State \quad  $\mathbf{p}_{i+1}=\widetilde{\mathbf{p}}/\beta_i$;
\State \quad $\mathbf{q}_{i+1}=\widetilde{\mathbf{q}}/\gamma_i$;
\State \textbf{end~for}
\end{algorithmic}
\end{algorithm}

It is noted that the main work of quaternion Saunders-Simon-Yip tridiagonalization procedure  for non-Hermitian matrices is to compute $\mathscr{A} \mathbf{q}_i$, $\mathscr{A}^*\mathbf{p}_i$, and $\bm{\alpha}_{i}=\langle\mathscr{A} \mathbf{q}_i,\mathbf{p}_i\rangle$. In the calculation, the above quaternion matrix-vector multiplication and inner
product are realized by the function (\ref{jrs}) as follows:
\begin{eqnarray*}
% \nonumber to remove numbering (before each equation)
    &&\mathscr{A} \mathbf{q}_i=\mathcal{R}^{-1}(\mathcal{R}(\mathscr{A})\mathcal{R}(\mathbf{q}_i)),
   \mathscr{A}^*\mathbf{p}_i=\mathcal{R}^{-1}(\mathcal{R}(\mathscr{A}^*)\mathcal{R}(\mathbf{p}_i)),\\
   &&\bm{\alpha}_{i}=\langle\mathscr{A} \mathbf{q}_i,\mathbf{p}_i\rangle =\mathcal{R}^{-1}(\mathcal{R}(\mathbf{p}_i^*)\mathcal{R}(\mathscr{A} \mathbf{q}_i)).
\end{eqnarray*}
We can implement the above calculations by only using the first block columns of
their real counterparts, which saves three-quarters of the computational operations.
That is,
\begin{eqnarray*}
% \nonumber to remove numbering (before each equation)
    \mathcal{R} (\mathscr{A} \mathbf{q}_i)_c=\mathcal{R}(\mathscr{A})\mathcal{R}(\mathbf{q}_i)_c,
   \mathcal{R} (\mathscr{A}^*\mathbf{p}_i)_c= \mathcal{R}(\mathscr{A})^T\mathcal{R}(\mathbf{p}_i)_c,
 \mathcal{R}(\bm{\alpha}_{i}) =  \mathcal{R}(\mathbf{p}_i)^T\mathcal{R}(\mathscr{A} \mathbf{q}_i)_c.
\end{eqnarray*}

Let  $\mathscr{T}_m$ denote the leading $m\times m$ submatrix of $\mathscr{T}$. Then  $\mathscr{T}_m$ is also a  tridiagonal quaternion matrix. Set %$\mathscr{P}_m$ and $\mathscr{Q}_m$ be matrices whose columns are the vectors $\mathbf{p}_j$ and $\mathbf{q}_j$, i.e.,
$\mathscr{P}_m=[\mathbf{p}_1,\cdots,\mathbf{p}_m]$ and $\mathscr{Q}_m=[\mathbf{q}_1,\cdots,\mathbf{q}_m]$. Then the first $m$ steps can be written as
\begin{subequations}\label{eq10}
\begin{align}
\mathscr{A}\mathscr{Q}_m=\mathscr{P}_m\mathscr{T}_m+\beta_m\mathbf{p}_{m+1}\mathbf{e}_m^*,\label{eq10_1}\\
\mathscr{A}^*\mathscr{P}_m=\mathscr{Q}_m\mathscr{T}_m^*+\gamma_m\mathbf{q}_{m+1}\mathbf{e}_m^*,\label{eq10_2}
\end{align}
\end{subequations}
where $\mathbf{e}_m$ is the $m$th column of the unit matrix.

\begin{proposition}\label{prop1}
 Suppose that $m$ steps of {\rm Algorithm \ref{alg3.1}} have been taken. Then
 \begin{subequations}\label{eq11}
\begin{align}
\mathbf{q}_j^*\mathbf{q}_k=\langle\mathbf{q}_k,\mathbf{q}_j\rangle=0, 0\leq k<j\leq m,\label{eq11_1}\\
\mathbf{p}_j^*\mathbf{p}_k=\langle\mathbf{p}_k,\mathbf{p}_j\rangle=0, 0\leq k<j\leq m,\label{eq11_2}\\
\mathbf{p}_j^*\mathscr{A}\mathbf{q}_k=\langle\mathscr{A}\mathbf{q}_k,\mathbf{p}_j\rangle=0, 0\leq k<j-1\leq m-1,\label{eq11_3}\\
\mathbf{q}_j^*\mathscr{A}^*\mathbf{p}_k=\langle\mathscr{A}^*\mathbf{p}_k,\mathbf{q}_j\rangle=0, 0\leq k<j-1\leq m-1.\label{eq11_4}
\end{align}
\end{subequations}
\end{proposition}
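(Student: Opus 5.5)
Our plan is to prove all four relations simultaneously by induction on $m$, using only the recursions of Algorithm \ref{alg3.1} and the axioms of the quaternionic inner product (right linearity and quaternionic hermiticity). The order of scalars matters throughout. We use $\langle \mathbf{x}\bm{\alpha},\mathbf{y}\rangle=\langle\mathbf{x},\mathbf{y}\rangle\bm{\alpha}$ and $\langle \mathbf{x},\mathbf{y}\bm{\alpha}\rangle=\bm{\alpha}^*\langle\mathbf{x},\mathbf{y}\rangle$. We also use the adjoint identity $\langle\mathscr{A}\mathbf{x},\mathbf{y}\rangle=\mathbf{y}^*\mathscr{A}\mathbf{x}=(\mathscr{A}^*\mathbf{y})^*\mathbf{x}=\langle\mathbf{x},\mathscr{A}^*\mathbf{y}\rangle$, which holds because $(\mathscr{X}\mathscr{Y})^*=\mathscr{Y}^*\mathscr{X}^*$ for quaternion matrices. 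The vectors are normalized, so $\langle\mathbf{p}_k,\mathbf{p}_k\rangle=\langle\mathbf{q}_k,\mathbf{q}_k\rangle=1$. Since $\mathbf{p}_0=\mathbf{q}_0=\mathbf{0}$, the cases $k=0$ are trivial. The base case $m=1$ is vacuous, and for $m=2$ the claims reduce to $\langle\mathbf{p}_1,\mathbf{p}_2\rangle=0$ and $\langle\mathbf{q}_1,\mathbf{q}_2\rangle=0$. These follow from $\langle\widetilde{\mathbf{p}}_2,\mathbf{p}_1\rangle=\bm{\alpha}_1-\bm{\alpha}_1=0$ and from $\langle\widetilde{\mathbf{q}}_2,\mathbf{q}_1\rangle=\langle\mathscr{A}^*\mathbf{p}_1,\mathbf{q}_1\rangle-\bm{\alpha}_1^*=\langle\mathscr{A}\mathbf{q}_1,\mathbf{p}_1\rangle^*-\bm{\alpha}_1^*=0$.

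For the inductive step we assume the relations for indices up to $m$ and prove them for $j=m+1$. First we show $\langle\widetilde{\mathbf{p}}_{m+1},\mathbf{p}_k\rangle=0$ for $k\le m$. For $k=m$ the value is $\langle\mathscr{A}\mathbf{q}_m,\mathbf{p}_m\rangle-\bm{\alpha}_m-\langle\mathbf{p}_{m-1},\mathbf{p}_m\rangle\gamma_{m-1}=0$. For $k=m-1$ it is $\langle\mathscr{A}\mathbf{q}_m,\mathbf{p}_{m-1}\rangle-\gamma_{m-1}$. Here we rewrite $\langle\mathscr{A}\mathbf{q}_m,\mathbf{p}_{m-1}\rangle=\langle\mathbf{q}_m,\mathscr{A}^*\mathbf{p}_{m-1}\rangle$ and substitute $\mathscr{A}^*\mathbf{p}_{m-1}=\mathbf{q}_m\gamma_{m-1}+\mathbf{q}_{m-1}\bm{\alpha}_{m-1}^*+\mathbf{q}_{m-2}\beta_{m-2}$. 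By the inductive $\mathbf{q}$-orthogonality, only $\gamma_{m-1}\langle\mathbf{q}_m,\mathbf{q}_m\rangle=\gamma_{m-1}$ survives, using that $\gamma_{m-1}$ is real. For $k\le m-2$, the $\mathbf{p}$-terms vanish by induction. What remains is $\langle\mathscr{A}\mathbf{q}_m,\mathbf{p}_k\rangle=\langle\mathbf{q}_m,\mathscr{A}^*\mathbf{p}_k\rangle$, and $\mathscr{A}^*\mathbf{p}_k$ lies in the right span of $\mathbf{q}_{k+1},\mathbf{q}_k,\mathbf{q}_{k-1}$ with $k+1<m$, so it vanishes as well. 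The $\mathbf{q}$-sequence is handled by the symmetric argument with the roles of $\mathscr{A}$ and $\mathscr{A}^*$, and of $\beta$ and $\gamma$, swapped. The key input there is $\langle\mathscr{A}^*\mathbf{p}_m,\mathbf{q}_{m-1}\rangle=\langle\mathbf{p}_m,\mathscr{A}\mathbf{q}_{m-1}\rangle=\beta_{m-1}$. Dividing by the real scalars $\beta_m,\gamma_m$ gives \eqref{eq11_1}--\eqref{eq11_2}.

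Relations \eqref{eq11_3}--\eqref{eq11_4} then follow directly from the recursions. Written differently, $\mathscr{A}\mathbf{q}_k=\mathbf{p}_{k+1}\beta_k+\mathbf{p}_k\bm{\alpha}_k+\mathbf{p}_{k-1}\gamma_{k-1}$, so for $j>k+1$ the orthogonality just proved gives $\langle\mathscr{A}\mathbf{q}_k,\mathbf{p}_j\rangle=0$. The same argument with $\mathscr{A}^*\mathbf{p}_k$ in the $\mathbf{q}$'s gives \eqref{eq11_4}. These are exactly the matrix relations \eqref{eq10}.

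The main obstacle is the bookkeeping of noncommutativity. We must check that every coefficient ends up on the correct side. Two facts make this work: the recursions multiply vectors by scalars on the right, matching right linearity, and the coupling coefficients $\beta_i,\gamma_i$ are real, so they commute wherever needed. The cross-identity $\langle\mathscr{A}\mathbf{q}_m,\mathbf{p}_{m-1}\rangle=\gamma_{m-1}$, which ties the two sequences together, is the step to verify most carefully.
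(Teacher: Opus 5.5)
Your proof is correct and takes essentially the same route as the paper: a simultaneous induction on $m$ through the two coupled three-term recurrences. In both, the cross-identity $\langle\mathscr{A}\mathbf{q}_m,\mathbf{p}_{m-1}\rangle=\gamma_{m-1}$ (in the paper, $\mathbf{p}_l^*\mathscr{A}\mathbf{q}_{l-1}=\beta_{l-1}$) does the real work. The only cosmetic difference is which half is written out: you do the $\mathbf{p}$-orthogonality explicitly by expanding $\mathscr{A}^*\mathbf{p}_k$ in the $\mathbf{q}$'s and cite symmetry for the $\mathbf{q}$'s, whereas the paper writes out the $\mathbf{q}$-case and calls the $\mathbf{p}$-case similar.
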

\begin{proof}
From Algorithm \ref{alg3.1}, we have
\begin{equation}\label{eq12_1}
\mathbf{p}_{i+1}\beta_i=\mathscr{A}\mathbf{q}_i- \mathbf{p}_i\bm{\alpha}_i-\mathbf{p}_{i-1}\gamma_{i-1}, ~
\mathbf{q}_{i+1}\gamma_i=\mathscr{A}^*\mathbf{p}_i-\mathbf{q}_i\bm{\alpha}_i^*-\mathbf{q}_{i-1}
\beta_{i-1},%\label{eq12_2}
\end{equation}
where $i=1,2,\cdots,m$. Then
\begin{subequations}\label{eq12}
\begin{align}
\mathbf{p}_{i+1}^*=\frac{1}{\beta_i}(\mathbf{q}_i^*\mathscr{A}^*- \bm{\alpha}_i^*\mathbf{p}_i^*-\gamma_{i-1}\mathbf{p}_{i-1}^*),\label{eq12_4}\\
\mathbf{q}_{i+1}^*=\frac{1}{\gamma_i}(\mathbf{p}_i^*\mathscr{A} -\bm{\alpha}_i\mathbf{q}_i^*-\beta_{i-1}\mathbf{q}_{i-1}^*),\label{eq12_3}
\end{align}
\end{subequations}
where $i=1,2,\cdots,m$.

Now we prove the results  by induction on $m$. For $m=1$, (\ref{eq11_1}) and (\ref{eq11_2}) are trivial.
For $m=2$, we see from (\ref{eq12_4}) and (\ref{eq12_3}) that
\begin{eqnarray*}
% \nonumber to remove numbering (before each equation)
 \mathbf{q}_{2}^*\mathbf{q}_1&=& \frac{1}{\gamma_1}(\mathbf{p}_1^*\mathscr{A} \mathbf{q}_1 - \bm{\alpha}_1\mathbf{q}_1^*\mathbf{q}_1-\beta_{0} \mathbf{q}_{0}^*\mathbf{q}_1)
   = \frac{1}{\gamma_1}(\mathbf{p}_1^*\mathscr{A} \mathbf{q}_1 - \bm{\alpha}_1)=0\\
   \mathbf{p}_{2}^*\mathbf{p}_{1}&=&\frac{1}{\beta_1}(\mathbf{q}_1^*\mathscr{A}^*\mathbf{p}_{1}- \bm{\alpha}_1^*\mathbf{p}_1^*\mathbf{p}_{1}-\gamma_{0}\mathbf{p}_{0}^*\mathbf{p}_{1})=\frac{1}{\beta_1}(
   \mathbf{q}_1^*\mathscr{A}^*\mathbf{p}_{1}- \bm{\alpha}_1^*)=0.
\end{eqnarray*}
Also, it is obvious that  $\mathbf{q}_{2}^*\mathbf{q}_0=0$, $\mathbf{p}_{2}^*\mathbf{p}_0=0$, $\mathbf{p}_2^*\mathscr{A}\mathbf{q}_0=0$ and $\mathbf{q}_2^*\mathscr{A}^*\mathbf{p}_0=0$. Therefore, (\ref{eq11}) holds for $m=2$.
%%%%%%%%%%%%%%%%%%%%%%%%%%%%%
 Suppose that (\ref{eq11}) is true for  $m=l$.  For $m=l+1$, it follows from (\ref{eq12_3}) and line 4 of Algorithm \ref{alg3.1} that
 \begin{eqnarray*}
 % \nonumber to remove numbering (before each equation)
   \mathbf{q}_{l+1}^*\mathbf{q}_l&=& \frac{1}{ \gamma_l}(\mathbf{p}_l^*\mathscr{A} \mathbf{q}_l - \bm{\alpha}_l\mathbf{q}_l^*\mathbf{q}_l-\beta_{l-1} \mathbf{q}_{l-1}^*\mathbf{q}_l)=
   \frac{1}{ \gamma_l}(\mathbf{p}_l^*\mathscr{A} \mathbf{q}_l - \bm{\alpha}_l)=\frac{1}{ \gamma_l}( \bm{\alpha}_l
   - \bm{\alpha}_l)=0.
 \end{eqnarray*}
For showing (\ref{eq11_1}) for $k=l-1$, we compute
\begin{eqnarray*}
% \nonumber to remove numbering (before each equation)
   \mathbf{q}_{l+1}^*\mathbf{q}_{l-1}&=& \frac{1}{\gamma_l}(\mathbf{p}_l^*\mathscr{A} \mathbf{q}_{l-1} - \bm{\alpha}_l\mathbf{q}_l^*\mathbf{q}_{l-1}-\beta_{l-1} \mathbf{q}_{l-1}^*\mathbf{q}_{l-1})\\
   &=&\frac{1}{\gamma_l}( \mathbf{p}_l^*\mathscr{A} \mathbf{q}_{l-1}-\beta_{l-1})\\
    &=& \frac{1}{\gamma_l}[\mathbf{p}_l^*(\mathbf{p}_{l-2}\gamma_{l-2}+\mathbf{p}_{l-1}\bm{\alpha}_{l-1}+\mathbf{p}_{l}\beta_{l-1})-\beta_{l-1}]\\
    &=&\frac{1}{\gamma_l}(\beta_{l-1}-\beta_{l-1})=0.
\end{eqnarray*}
By  (\ref{eq12_3})  and the induction principle, for  $k=0,1,\cdots,l-2$, we obtain
\begin{eqnarray}
% \nonumber to remove numbering (before each equation)
   \mathbf{q}_{l+1}^*\mathbf{q}_k&=& \frac{1}{\gamma_l}(\mathbf{p}_l^*\mathscr{A} \mathbf{q}_k - \bm{\alpha}_l\mathbf{q}_l^*\mathbf{q}_k-\beta_{l-1} \mathbf{q}_{l-1}^*\mathbf{q}_k)\nonumber\\
   &=& \frac{1}{\gamma_l}\mathbf{p}_l^*\mathscr{A} \mathbf{q}_k\nonumber\\
    &=&\frac{1}{\gamma_l} \mathbf{p}_l^*(\mathbf{p}_{k-1}\gamma_{k-1}+\mathbf{p}_k\bm{\alpha}_k+\mathbf{p}_{k+1}\beta_k)=0,
    \label{eq250126}
\end{eqnarray}
where the third equality uses (\ref{eq12_1}) for $\mathscr{A} \mathbf{q}_k$.
Thus (\ref{eq11_1}) is proved for $m=l+1$. Similarly, (\ref{eq11_2})  can be shown for $m=l+1$.
In addition, it follows from the relation (\ref{eq250126}) that
\begin{equation*}
\mathbf{p}_l^*\mathscr{A} \mathbf{q}_k=0,k=0,1,\cdots,l-1.
\end{equation*}
Similarly, we get
\begin{equation*}
\mathbf{q}_l^*\mathscr{A}^* \mathbf{p}_k=0,k=0,1,\cdots,l-1.
\end{equation*}
Therefore, (\ref{eq11})   is true for $m=l+1$, which proves the desired results.
\end{proof}

\begin{remark}\label{rem3.1}
The above quaternion Saunders-Simon-Yip tridiagonalization procedure is different from the quaternion Lanczos biorthogonalization process given in {\rm \cite[Algorithm 3.2]{lw2024}}. In fact, the quaternion Lanczos biorthogonalization process for general non-Hermitian quaternion matrices $\mathscr{A}\in\mathbb{Q}^{n\times n}$ is computed from two arbitrarily selected unit length vectors $\mathbf{w}_1$ and $\mathbf{v}_1$, and then generate subsequently vectors $\mathbf{w}_2,\mathbf{w}_3,\cdots$ and $\mathbf{v}_2, \mathbf{v}_3,\cdots$ by three-term recurrence relations.

Let $\mathscr{W}_m=[\mathbf{w}_1,\mathbf{w}_2,\cdots,\mathbf{w}_m]$ and $\mathscr{V}_m=[\mathbf{v}_1,\mathbf{v}_2,\cdots,\mathbf{v}_m]$. We obtain in general a tridiagonal form  $\mathscr{S}_m=\mathscr{W}_m^*\mathscr{A}\mathscr{V}_m$, where $\mathscr{W}_m^*\mathscr{V}_m=\mathbf{I}_m$. The crucial difference to our proposed quaternion Saunders-Simon-Yip tridiagonalization process is the fact that we have $\mathscr{P}_m^*\mathscr{P}_m=\mathbf{I}_m$ and $\mathscr{Q}_m^*\mathscr{Q}_m=\mathbf{I}_m$, i.e., the partially unitary instead of a transformation with the matrices $\mathscr{W}_m$ and $\mathscr{V}_m$ such that $\mathscr{W}_m^*\mathscr{V}_m=\mathbf{I}_m$. Especially, if $m=n$,  the quaternion  Saunders-Simon-Yip tridiagonalization process preserves the singular values of the non-Hermitian quaternion matrix $\mathscr{A}\in\mathbb{Q}^{n\times n}$ since   $\mathscr{P}_n^*\mathscr{A}\mathscr{Q}_n=\mathscr{T}_n$ with $\mathscr{P}_n,\mathscr{Q}_n\in\mathbb{Q}^{n\times n}$ being unitary quaternion matrices. In contrast, the quaternion Lanczos biorthogonalization process is a similar transformation of the quaternion matrix $\mathscr{A}\in\mathbb{Q}^{n\times n}$ since  $\mathscr{S}_n=\mathscr{W}_n^*\mathscr{A}\mathscr{V}_n=\mathscr{V}_n^{-1}\mathscr{A}\mathscr{V}_n$ by using $\mathscr{W}_n^*\mathscr{V}_n=\mathbf{I}_n$.
\end{remark}

\begin{remark}\label{rem3.2}
The above quaternion Saunders-Simon-Yip tridiagonalization procedure extend the Hermitian quaternion Lanczos algorithm to the non-Hermitian case: instead of one three-term recurrence of the Hermitian quaternion Lanczos algorithm, we obtain two three-term recurrences. In particularly, if $\mathscr{A}$ is Hermitian, then the proposed algorithm reduces to the Hermitian quaternion Lanczos algorithm given in {\rm\cite{jn2021}}. %see Algorithm \ref{alg3.2}.
\end{remark}

\begin{remark}\label{rem3.3}
The distinct character of the quaternion Saunders-Simon-Yip tridiagonalization algorithm is apparent from the subspaces involved in the computation. From {\rm(\ref{eq10})}, it is known  that for $k=1,2,\cdots$,
\begin{eqnarray*}
% \nonumber to remove numbering (before each equation)
   && \mathbf{p}_{2k}\in\mathrm{span}(\mathbf{b},\mathscr{A}\mathscr{A}^*\mathbf{b},\cdots,
(\mathscr{A}\mathscr{A}^*)^{k-1}\mathbf{b}, \mathscr{A}\mathbf{c}, \mathscr{A}\mathscr{A}^*\mathscr{A}\mathbf{c},\cdots,(\mathscr{A}\mathscr{A}^*)^{k-1}\mathscr{A}\mathbf{c}),\\
 && \mathbf{p}_{2k+1}\in\mathrm{span}(\mathbf{b},\mathscr{A}\mathscr{A}^*\mathbf{b},\cdots,
(\mathscr{A}\mathscr{A}^*)^{k}\mathbf{b}, \mathscr{A}\mathbf{c}, \mathscr{A}\mathscr{A}^*\mathscr{A}\mathbf{c},\cdots,(\mathscr{A}\mathscr{A}^*)^{k-1}\mathscr{A}\mathbf{c}),\\
 && \mathbf{q}_{2k}\in\mathrm{span}(\mathbf{c},\mathscr{A}^*\mathscr{A}\mathbf{c},\cdots,
(\mathscr{A}^*\mathscr{A})^{k-1}\mathbf{c}, \mathscr{A}^*\mathbf{b}, \mathscr{A}^*\mathscr{A}\mathscr{A}^*\mathbf{b},\cdots,(\mathscr{A}^*\mathscr{A})^{k-1}\mathscr{A}^*\mathbf{b}),\\
 && \mathbf{q}_{2k+1}\in\mathrm{span}(\mathbf{c},\mathscr{A}^*\mathscr{A}\mathbf{c},\cdots,
(\mathscr{A}^*\mathscr{A})^{k}\mathbf{c}, \mathscr{A}^*\mathbf{b}, \mathscr{A}^*\mathscr{A}\mathscr{A}^*\mathbf{b},\cdots,(\mathscr{A}^*\mathscr{A})^{k-1}\mathscr{A}^*\mathbf{b}).
\end{eqnarray*}
Thus the underlying subspaces are not  Krylov subspaces, which can be seen as the union of two Krylov subspaces generated with the normal equation  matrix $\mathscr{A}\mathscr{A}^*$ $($or $\mathscr{A}^*\mathscr{A}$$)$ with initial vectors $\mathbf{b}$ and $\mathscr{A}\mathbf{c}$ $($or $\mathbf{c}$ and $\mathscr{A}^*\mathbf{b}$$)$. When $\mathbf{b}=\mathbf{c}$, we also see that the subspaces span$(\mathbf{q}_1,\cdots,\mathbf{q}_{2k})$ and span$(\mathbf{q}_1,\cdots,\mathbf{q}_{2k},\mathbf{q}_{2k+1})$ contain the Krylov subspace generated by $k$ steps of the Hermitian Lanczos algorithm applied to the normal equation.
\end{remark}

\section{The quaternion conjugate gradient-type method}
In this section, we propose two kinds of quaternion conjugate gradient-type methods for solving the non-Hermitian quaternion linear systems (\ref{1.1})  based on the Galerkin condition and the minimum residual condition, which is denoted by QNHERLQ and QNHERQR, respectively.

\subsection{The QNHERLQ method}
Let $\mathbf{x}_0$ be an initial guess of the solution for the quaternion linear systems (\ref{1.1}), and $\mathbf{r}_0$ be the corresponding initial residual. Let $\beta=\|\mathbf{r}_0\|_2$ and $\mathbf{p}_1=\mathbf{r}_0/\beta$. Choose an arbitrary unit vector $\mathbf{q}_1$ and start the quaternion Saunders-Simon-Yip tridiagonalization algorithm. From the quaternion vectors $\mathbf{p}_j$, $\mathbf{q}_j$, $j = 1,\cdots,m$,  and the
strictly tridiagonal quaternion matrix $\mathscr{T}_m$ generated by Algorithm \ref{alg3.1}, we define an approximation solution to (\ref{1.1}) as follows: let $\mathbf{x}_m$ be the vector from the affine subspace $\mathbf{x}_0+\mathrm{span}(\mathbf{q}_1,\mathbf{q}_2,\cdots,\mathbf{q}_m)$ such that the residual vector $\mathbf{r}_m=\mathbf{b}-\mathscr{A}\mathbf{x}_m$ is orthogonal to $\mathrm{span}(\mathbf{p}_1,\mathbf{p}_2,\cdots,\mathbf{p}_m)$. Obviously, any quaternion vector $\mathbf{x}$ in $\mathbf{x}_0+\mathrm{span}(\mathbf{q}_1,\mathbf{q}_2,\cdots,\mathbf{q}_m)$ can be rewritten as
$\mathbf{x} =\mathbf{x}_0+\mathscr{Q}_m\mathbf{y}$,
where $\mathbf{y}\in\mathbb{Q}^m$. The relation (\ref{eq11}) results in
\begin{eqnarray}
% \nonumber to remove numbering (before each equation)
  \mathbf{b}-\mathscr{A}\mathbf{x} &=&  \mathbf{b}-\mathscr{A}(\mathbf{x}_0+\mathscr{Q}_m\mathbf{y})=
   \mathbf{r}_0-\mathscr{A} \mathscr{Q}_m\mathbf{y} \nonumber\\
   &=&\beta\mathbf{p}_1-(\mathscr{P}_m\mathscr{T}_m+\beta_m\mathbf{p}_{m+1}\mathbf{e}_m^*)\mathbf{y}\nonumber\\
      &=&\mathscr{P}_m(\beta \mathbf{e}_1-\mathscr{T}_m\mathbf{y})-\beta_m\mathbf{p}_{m+1}\mathbf{e}_m^*\mathbf{y}.\label{4.2}
\end{eqnarray}
Using the orthogonality of $\mathbf{b}-\mathscr{A}\mathbf{x}$ and  $\mathrm{span}(\mathbf{p}_1,\mathbf{p}_2,\cdots,\mathbf{p}_m)$ yields
\begin{equation}\label{5.16}
\mathscr{T}_m\mathbf{y}=\beta\mathbf{e}_1,
\end{equation}
where $\beta=\|\mathbf{r}_0\|_2$. Once the solution $\mathbf{y}_m$ of (\ref{5.16}) is obtained, then the approximation solution has the following form
\begin{equation}\label{5.17}
\mathbf{x}_m=\mathbf{x}_0+\mathscr{Q}_m\mathbf{y}_m.
\end{equation}
From (\ref{4.2}) and (\ref{5.16}), we have
\begin{equation}\label{ree}
 \|\mathbf{r}_m\|_2=\beta_m|\mathbf{e}_m^*\mathbf{y}_m|,
\end{equation}
which can be  computed without the knowledge of $\mathbf{x}_m$.

Next, we discuss the practical implementation of the computation of $\mathbf{x}_m$ via the quaternion LQ factorization of $\mathscr{T}_m$, this method is called the QNHERLQ for simplitity.

Let the orthogonal trigonometric decomposition of $\mathscr{T}_m$ be given by
\begin{equation}\label{5.18}
  \mathscr{T}_m=\widetilde{\mathscr{L}}_m\mathscr{V}_m,\mathscr{V}_m^*\mathscr{V}_m=\mathbf{I}_m,
\end{equation}
where $\widetilde{\mathscr{L}}_m$ is a lower triangular quaternion matrix.    The tilde is used to indicate that $\widetilde{\mathscr{L}}_m$ differs from the $m\times m$
leading part of $\widetilde{\mathscr{L}}_{m+1}$ in the $(m, m)$ element only.  Substituting (\ref{5.18}) into (\ref{5.16}) and (\ref{5.17})  yields
\begin{equation*}
\widetilde{\mathscr{L}}_m\mathscr{V}_m\mathbf{y}_m=\beta\mathbf{e}_1,
 \mathbf{x}_m=\mathbf{x}_0+\mathscr{Q}_m\mathscr{V}_m^* (\mathscr{V}_m\mathbf{y}_m).
\end{equation*}
Set
\begin{equation}\label{e5910}
\widetilde{\mathbf{z}}_m=\mathscr{V}_m\mathbf{y}_m,~\widetilde{\mathscr{W}}_m=\mathscr{Q}_m\mathscr{V}_m^{*}.
\end{equation}
Then
\begin{equation}\label{e5911}
\widetilde{\mathscr{L}}_m\widetilde{\mathbf{z}}_m=\beta\mathbf{e}_1, ~\mathbf{x}_m=\mathbf{x}_0+\widetilde{\mathscr{W}}_m\widetilde{\mathbf{z}}_m.
\end{equation}

Now we analyze the structure of $\mathscr{V}_m$.
The tridiagonal structure shows that $\mathscr{V}_m$ can be written as
\begin{equation}\label{5.19}
\mathscr{V}_m=\mathscr{G}_{1,2}\mathscr{G}_{2,3}\cdots\mathscr{G}_{m-1,m},
\end{equation}
where
\begin{equation}\label{gmm1}
\mathscr{G}_{i,i+1}=\left[
                   \begin{array}{cccc}
                   \mathbf{I}_{i-1}&&&\\
                     & c _i & \mathbf{s}_i &\\
                     &\bar{\mathbf{s}}_i& - c_i& \\
                     &&&\mathbf{I}_{m-1-i}
                   \end{array}
                 \right]\in\mathbb{Q}^{m\times m},  ~~ c_i^2+|\mathbf{s}_i|^2=1,~~i=1,2,\cdots,m-1.
\end{equation}
For the purpose of establishing recursion formulae, we define $\bm{ \nu'}_1=\bm{\alpha}_1$  and $\widetilde{\delta}_2=\beta_1$. We have
\begin{equation*}
  \left[\begin{array}{cc}
  \bm{ \nu'}_i &\gamma_i\\
  \widetilde{\delta}_{i+1}&\bm{\alpha}_{i+1}\\
  0&\beta_{i+1}
  \end{array}
  \right]
  \left[
  \begin{array}{cc}
  c_i&\mathbf{s}_i\\
  \overline{\mathbf{s}}_i&-c_i
  \end{array}
  \right]=\left[\begin{array}{cc}
 \bm{\nu}_i &0\\
 \bm{\delta}_{i+1}&\bm{\nu'}_{i+1}\\
 \bm{\eta}_{i+2}&\widetilde{\delta}_{i+2}
  \end{array}
  \right].
\end{equation*}
The $i$th Givens transformation  zeros out $\gamma_i$, and hence we obtain
\begin{eqnarray}
% \nonumber to remove numbering (before each equation)
   && \bm{\nu}_i=\dfrac{ \bm{ \nu'}_i}{ |\bm{ \nu'}_i|} \sqrt{| \bm{ \nu'}_i|^2+\gamma_i^2},
   c_i=\dfrac{ |\bm{ \nu'}_i|}{\sqrt{| \bm{ \nu'}_i|^2+\gamma_i^2}},
   \mathbf{s}_i=\dfrac{ \bm{ \bar{\nu}}'_i}{ \bm{ \nu'}_i} \dfrac{ \bm{ \gamma}_i}{\sqrt{| \bm{ \nu'}_i|^2+\gamma_i^2}}\label{e5915}
\end{eqnarray}
and
\begin{eqnarray*}\label{e5916}
& \bm{\delta}_{i+1}=\widetilde{\delta}_{i+1}c_i+\bm{\alpha}_{i+1}\bar{\mathbf{s}}_i, ~\bm{\nu'}_{i+1}=\widetilde{\delta}_{i+1}\mathbf{s}_i-\bm{\alpha}_{i+1}c_i,\\[4pt]
&\widetilde{\delta}_{i+2}=-\beta_{i+1}c_i, ~\bm{\eta}_{i+2}=\beta_{i+1}\bar{\mathbf{s}}_i.
\end{eqnarray*}

By the special structure of the quaternion matrix $\mathscr{V}_m$ in  (\ref{5.19}), $\widetilde{\mathscr{W}}_m$ and  $\widetilde{\mathbf{z}}_m$ can be written as
\begin{eqnarray}
\widetilde{\mathscr{W}}_m=\big[\mathbf{w}_1,\mathbf{w}_2,\cdots,\mathbf{w}_{m-1},\widetilde{\mathbf{w}}_m\big],\label{e5912}\\
\widetilde{\mathbf{z}}_m=\big[\bm{\zeta}_1,\bm{\zeta}_2,\cdots,\bm{\zeta}_{m-1},
\bm{\widetilde{\zeta}}_m\big]^{\mathrm{T}}.\label{e5913}
\end{eqnarray}
From equations (\ref{e5912})  and (\ref{e5913}), $\mathbf{x}_m$ cannot be computed recursively by the relation (\ref{e5911}).
There we  use $\mathscr{L}_m$ to denote $\widetilde{\mathscr{L}}_m$  with $\bm{\nu'}_m$ replaced
by $\bm{\nu}_m$ in the following. Similarly,  we define
\begin{equation*}
\mathscr{W}_m=\big[\mathbf{w}_1,\mathbf{w}_2,\cdots,\mathbf{w}_{m-1}, \mathbf{w}_m\big],
\mathbf{z}_m=\big[\bm{\zeta}_1,\bm{\zeta}_2,\cdots,\bm{\zeta}_{m-1},\bm{\zeta}_m\big]^{T},
\end{equation*}
where $\mathbf{z}_m$ satisfies
\begin{equation}\label{e5917}
  \mathscr{L}_m\mathbf{z}_m=\beta\mathbf{e}_1,
\end{equation}
which, together with  (\ref{e5911}) yields $\bm{\nu'}_m\bm{\widetilde{\zeta}}_m=\bm{\nu}_m\bm{\zeta}_m$. By (\ref{e5915}), we have
\begin{equation*}\label{e5918}
 \bm{\zeta}_m= \bm{\nu}_m^{-1}(\bm{\nu'}_m\bm{\widetilde{\zeta}}_m)
 =\dfrac{ \bm{ \bar{\nu}'}_i}{ |\bm{ \nu'}_i|}\dfrac{1}{ \sqrt{| \bm{ \nu'}_i|^2+\gamma_i^2}}(\bm{\nu'}_m\bm{\widetilde{\zeta}}_m)
 = \dfrac{|\bm{ \nu'}_i|}{ \sqrt{| \bm{ \nu'}_i|^2+\gamma_i^2}}\bm{\widetilde{\zeta}}_m
 =c_m\bm{\widetilde{\zeta}}_m.
\end{equation*}
And, from (\ref{e5912}) and the form of $\mathscr{G}_{m,m+1}$ in (\ref{gmm1}), we obtain
\begin{equation*}\label{e5919}
  \big[\widetilde{\mathbf{w}}_m,\mathbf{q}_{m+1}\big]\left[
                                   \begin{array}{cc}
                                    c_m & \mathbf{s}_m \\
                                     \bar{\mathbf{s}}_m & -c_m \\
                                   \end{array}
                                 \right]=\big[\mathbf{w}_m,\widetilde{\mathbf{w}}_{m+1}\big], ~\widetilde{\mathbf{w}}_1=\mathbf{q}_1.
\end{equation*}

If $\bm{\widetilde{\nu}}_m=0$, then $\widetilde{\mathscr{L}}_m$ is not  invertible and the equation (\ref{e5911})  is unsolvable.  Instead we see from (\ref{e5915})   that $ \mathscr{L}_m$ is  invertible if
$\gamma_m\neq0$ and the equation (\ref{e5917}) is solvable, so $\mathbf{z}_m$ is defined. Hence, rather than updating $\mathbf{x}_m$ each step, we update
\begin{equation}\label{e5920}
  \widetilde{\mathbf{x}}_m=\mathbf{x}_0+\mathscr{W}_m\mathbf{z}_m=\widetilde{\mathbf{x}}_{m-1}+\mathbf{w}_m\bm{\zeta}_m.
\end{equation}
By the relations (\ref{e5911}),  (\ref{e5912}),  (\ref{e5913}) and (\ref{e5920}), we have
\begin{equation}\label{e5921}
 \mathbf{x}_{m+1}=\mathbf{x}_0+\widetilde{\mathscr{W}}_{m+1}\widetilde{\mathbf{z}}_{m+1}=\widetilde{\mathbf{x}}_{m}
 +\widetilde{\mathbf{w}}_{m+1}\bm{\widetilde{\zeta}}_{m+1}.
\end{equation}

Finally, from  the relation (\ref{e5917}), it follows that
\begin{equation*}\label{e5922}
\left\{
  \begin{array}{l}
\bm{\nu}_1\bm{\zeta}_1=\beta  \Longrightarrow \bm{\zeta}_1=\bm{\nu}_1^{-1}\beta,  \\
\bm{\delta}_2\bm{\zeta}_1+\bm{\nu}_2\bm{\zeta}_2=0 \Longrightarrow \bm{\zeta}_2=-\bm{\nu}_2^{-1} \bm{\delta}_2\bm{\zeta}_1,  \\
\bm{\eta}_i\bm{\zeta}_{i-2}+\bm{\delta}_i\bm{\zeta}_{i-1}+\bm{\nu}_i\bm{\zeta}_i=0  \Longrightarrow \bm{\zeta}_{i}=-\bm{\nu}_i^{-1}[ \bm{\eta}_i\bm{\zeta}_{i-2}+\bm{\delta}_i\bm{\zeta}_{i-1}],~i\geq 3.
  \end{array}
\right.
\end{equation*}
Thus we summarize the QNHERLQ scheme as in Algorithm \ref{alg5.2}.

\begin{algorithm}[!ht]
\caption{The QNHERLQ  method}
\label{alg5.2}
Input the coefficient matrix $\mathscr{A}\in\mathbb{Q}^{n\times n}$ and the right-hand side vector $\mathbf{b}\in\mathbb{Q}^n$. Choose an initial vector $\mathbf{x}_0\in\mathbb{Q}^n$ and the tolerance error $\varepsilon>0$.
\begin{algorithmic}[1]
\State Compute $\mathbf{r}_0=\mathbf{b}-\mathscr{A}\mathbf{x}_0$, $\beta=\|\mathbf{r}_0\|_2$, and $\mathbf{q}_1=\mathbf{r}_0/\beta$;
\State Choose $\mathbf{p}_1$ such that $\|\mathbf{p}_1\|_2=1$, e.g., $\mathbf{p}_1=\mathbf{q}_1$;
\State   $\bm{\alpha}_{1}=\langle\mathbf{A} \mathbf{q}_1,\mathbf{p}_1\rangle$,
 $\widetilde{\mathbf{p}}=\mathbf{A} \mathbf{q}_1-\mathbf{p}_{1}\bm{\alpha}_{1}$,
         $\widetilde{\mathbf{q}}=\mathscr{A}^*\mathbf{p}_1-\mathbf{q}_1\bm{\alpha}_1^*$;
\State  $\beta_1=\|\widetilde{\mathbf{p}}\|_2$,  $\gamma_1=\|\widetilde{\mathbf{q}}\|_2$;
\State  $\mathbf{p}_{2}=\widetilde{\mathbf{p}}/\beta_1$, $\mathbf{q}_{2}=\widetilde{\mathbf{q}}/\gamma_1$;
\State   $\widetilde{\mathbf{w}}_1=\mathbf{q}_1$, $\bm{ \nu'}_1=\bm{\alpha}_1$, $\widetilde{\delta}_2=\beta_1$, $\bm{\zeta}_0=0$, $\bm{\eta}_2=0$;
\State  $c_1=\dfrac{ |\bm{ \nu'}_1|}{\sqrt{| \bm{ \nu'}_1|^2+\gamma_1^2}}$,
   $\mathbf{s}_1=\dfrac{ \bm{ \bar{\nu}}'_1}{ \bm{ \nu'}_1} \dfrac{ \bm{ \gamma}_1}{\sqrt{| \bm{ \nu'}_1|^2+\gamma_1^2}}$, $\bm{\nu}_1=\dfrac{ \bm{ \nu'}_1}{ |\bm{ \nu'}_1|} \sqrt{| \bm{ \nu'}_1|^2+\gamma_1^2}$;
\State $\mathbf{w}_1=\widetilde{\mathbf{w}}_1c_1+\mathbf{q}_2\bar{\mathbf{s}}_1$,  $\widetilde{\mathbf{w}}_2=\widetilde{\mathbf{w}}_1\mathbf{s}_1-\mathbf{q}_2c_1 $;
\State $\bm{\zeta}_1=\bm{\nu}_1^{-1}\beta$, $\bm{\widetilde{\zeta}}_{1}=\bm{\zeta}_{1}/c_{1}$;
\State  $\mathbf{x}_{1}=\mathbf{x}_{0}
 +\widetilde{\mathbf{w}}_{1}\bm{\widetilde{\zeta}}_{1}$, $\widetilde{\mathbf{x}}_{1}=\mathbf{x}_0+\mathbf{w}_1\bm{\zeta}_1$;

\State  $\mathbf{r}_1=\mathbf{b}-\mathscr{A}\mathbf{x}_1$,
$m=1$;

\State  ${\bf while}~(\|\mathbf{r}_m\| > \beta\varepsilon)$
\State \quad $\alpha_{m+1}=\langle\mathbf{A} \mathbf{q}_{m+1},\mathbf{p}_{m+1}\rangle$;
\State \quad  $\widetilde{\mathbf{p}}=\mathbf{A} \mathbf{q}_{m+1}-\mathbf{p}_{m+1}\bm{\alpha}_{m+1}-\mathbf{p}_{m}\gamma_{m}$,
$\widetilde{\mathbf{q}}=\mathscr{A}^*\mathbf{p}_{m+1}-\mathbf{q}_{m+1}\bm{\alpha}_{m+1}^*
-\mathbf{q}_{m}\beta_{m}$;
\State \quad $\beta_{m+1}=\|\widetilde{\mathbf{p}}\|_2$, $\gamma_{m+1}=\|\widetilde{\mathbf{q}}\|_2$;

\State \quad \textbf{if} $\beta_{m+1}=0$ or $\gamma_{m+1}=0$
\State \quad\quad \quad \textbf{stop}
\State \quad  \textbf{else}
\State \quad\quad\quad   $\mathbf{p}_{m+2}=\widetilde{\mathbf{p}}/\beta_{m+1}$, $\mathbf{q}_{m+2}=\widetilde{\mathbf{q}}/\gamma_{m+1}$;
\State \quad  \textbf{end}
\State \quad $\bm{\delta}_{m+1}=\widetilde{\delta}_{m+1}c_m+\bm{\alpha}_{m+1}\bar{\mathbf{s}}_m$, $\bm{\nu'}_{m+1}=\widetilde{\delta}_{m+1}\mathbf{s}_m-\bm{\alpha}_{m+1}c_m$;
\State \quad  $\widetilde{\delta}_{m+2}=-\beta_{m+1}c_m$, $\bm{\eta}_{m+2}=\beta_{m+1}\bar{\mathbf{s}}_m$;

\State \quad  $c_{m+1}=\dfrac{ |\bm{ \nu'}_{m+1}|}{\sqrt{| \bm{ \nu'}_{m+1}|^2+\gamma_{m+1}^2}}$,
   $\mathbf{s}_{m+1}=\dfrac{ \bm{ \overline{\nu}}'_{m+1}}{ \bm{ \nu'}_{m+1}} \dfrac{ \bm{ \gamma}_{m+1}}{\sqrt{| \bm{ \nu'}_{m+1}|^2+\gamma_{m+1}^2}}$, $\bm{\nu}_{m+1}=\dfrac{ \bm{ \nu'}_{m+1}}{ |\bm{ \nu'}_{m+1}|} \sqrt{| \bm{ \nu'}_{m+1}|^2+\gamma_{m+1}^2}$;

\State \quad $\bm{\zeta}_{m+1}=-\bm{\nu}_{m+1}^{-1}[ \bm{\eta}_{m+1}\bm{\zeta}_{m-1}+\bm{\delta}_{m+1}\bm{\zeta}_{m}]$,
$\bm{\widetilde{\zeta}}_{m+1}=\bm{\zeta}_{m+1}/c_{m+1}$;

\State \quad $\mathbf{w}_{m+1}=\widetilde{\mathbf{w}}_{m+1}c_{m+1}+\mathbf{q}_{m+2}\bar{\mathbf{s}}_{m+1}$,  $\widetilde{\mathbf{w}}_{m+2}=\widetilde{\mathbf{w}}_{m+1}\mathbf{s}_{m+1}-\mathbf{q}_{m+2}c_{m+1}$;

\State \quad  $\mathbf{x}_{m+1}=\widetilde{\mathbf{x}}_{m}
 +\widetilde{\mathbf{w}}_{m+1}\bm{\widetilde{\zeta}}_{m+1}$,
 $\widetilde{\mathbf{x}}_{m+1} =\widetilde{\mathbf{x}}_{m}+\mathbf{w}_{m+1}\bm{\zeta}_{m+1}$;

\State \quad $\mathbf{r}_{m+1}=\mathbf{b}-\mathscr{A}\mathbf{x}_{m+1}$;
\State \quad $m=m+1$;
\State  ${\bf end}$
\end{algorithmic}
\end{algorithm}

\subsection{ The QNHERQR method}
Let $\mathbf{x}_0$ be an initial guess and  $\mathbf{p}_1=\mathbf{r}_0/\beta$, where $\mathbf{r}_0=\mathbf{b}-\mathscr{A}\mathbf{x}_0$ and $\beta=\|\mathbf{r}_0\|_2$. Choose an arbitrary unit vector $\mathbf{q}_1$ and run $m$ steps of the quaternion Saunders-Simon-Yip tridiagonalization procedure. We find an approximation solution $\mathbf{x}_m$ to (\ref{1.1}) from the affine subspace $\mathbf{x}_0+\mathrm{span}(\mathbf{q}_1,\mathbf{q}_2,\cdots,\mathbf{q}_m)$ which minimizes the residual vector norm $\|\mathbf{r}_m\|_2$, where $\mathbf{r}_m=\mathbf{b}-\mathscr{A}\mathbf{x}_m$. Let $\mathbf{x}_m$ be given by
\begin{equation}\label{xm1}
\mathbf{x}_m=\mathbf{x}_0+\mathscr{Q}_m\mathbf{y}_m.
\end{equation}
Direct calculations give
\begin{eqnarray}
% \nonumber to remove numbering (before each equation)
  \mathbf{r}_m&=&\mathbf{b}-\mathscr{A}\mathbf{x}_m= \mathbf{b}-\mathscr{A}(\mathbf{x}_0+\mathscr{Q}_m\mathbf{y}_m)=
   \mathbf{r}_0-\mathscr{A} \mathscr{Q}_m\mathbf{y}_m \nonumber\\
   &=&\beta\mathbf{p}_1-(\mathscr{P}_m\mathscr{T}_m+\beta_m\mathbf{p}_{m+1}\mathbf{e}_m^*)\mathbf{y}_m
   =\beta\mathbf{p}_1-\mathscr{P}_{m+1}\widetilde{\mathscr{T}}_m\mathbf{y}_m\nonumber\\
      &=&\mathscr{P}_{m+1}(\beta \mathbf{e}_1-\widetilde{\mathscr{T}}_m\mathbf{y}_m),
\end{eqnarray}
where
\begin{equation}\label{5.2}
\widetilde{\mathscr{T}}_m
=\left[\begin{array}{c}
\mathscr{T}_m\\
\beta_m\mathbf{e}_m^*
\end{array}\right]
\in\mathbb{Q}^{(m+1)\times m}.
\end{equation}
By the minimal residual norm condition and the orthogonality of the columns of $\mathscr{P}_{m+1}$,
we obtain the least squares problem
\begin{equation}\label{5.1}
\mathbf{y}_m=\arg\min\limits_{\mathbf{y}\in\mathbb{Q}^m}\{\|\beta\mathbf{e}_1
-\widetilde{\mathscr{T}}_m\mathbf{y}\|_2\},
\end{equation}
where $\beta=\|\mathbf{r}_0\|_2$.

The minimization problem (\ref{5.1}) is a least squares problem of a strictly tridiagonal quaternion coefficient matrix, which can be solved by the quaternion QR method. The quaternion QR factorization can be obtained with an update procedure to make $\mathbf{x}_m$ directly available at every step. We call the resulting method  QNHERQR.

From the special structure of $\widetilde{\mathscr{T}}_m$, we can compute $m$ quaternion Givens transformations
$\mathscr{G}_1,\mathscr{G}_2,\cdots,\mathscr{G}_m$ such that
\begin{equation}\label{5.3}
  \mathscr{G}_m\mathscr{G}_{m-1}\cdots\mathscr{G}_2\mathscr{G}_1\widetilde{\mathscr{T}}_m=
  \left[\begin{array}{c}
  \mathscr{R}_m\\
  \mathbf{0}
  \end{array}
  \right],
\end{equation}
where
\begin{eqnarray*}
% \nonumber to remove numbering (before each equation)
   && \mathscr{G}_i=
   \mathrm{diag}\left(\mathbf{I}_{i-1},\left[
  \begin{array}{cc}
 c_i&\mathbf{s}_i\\
  -\bar{\mathbf{s}}_i&c_i
  \end{array}
  \right],\mathbf{I}_{m-i}\right) \in\mathbb{Q}^{(m+1)\times (m+1)}, c_i^2+|\mathbf{s}_i|^2=1,\label{5.4}\\
  &&\mathscr{R}_m=
  \left[\begin{array}{ccccc}
  \bm{\sigma}_1&\bm{\delta}_1&\bm{\varepsilon}_1&&\\
  &\bm{\sigma}_2&\bm{\delta}_2&\ddots\\
  &&\ddots&\ddots&\bm{\varepsilon}_{m-2}\\
  &&&\bm{\sigma}_{m-1}&\bm{\delta}_{m-1}\\
  &&&&\bm{\sigma}_m\\
  \end{array}
  \right].\label{5.5}
\end{eqnarray*}
Notice that $\beta_i\neq0$ imply that $\bm{\sigma}_i\neq 0$ for $i=1,2,\cdots,m$ and $\mathscr{R}_m$ is invertible.

Let
\begin{equation}\label{5.6}
  \mathscr{G}=   \mathscr{G}_m   \mathscr{G}_{m-1}\cdots  \mathscr{G}_2   \mathscr{G}_1, ~~\left[\begin{array}{c}
\bm{t}_m\\ \bm{\rho}_m
\end{array}\right]=  \mathscr{G}(\beta \bm{e}_1),~~ \bm{t}_m=[\bm{\tau}_1,\bm{\tau}_2,\cdots,\bm{\tau}_m]^{T}.
\end{equation}
Obviously, $\mathscr{G}$ is  a $(m+1)\times (m+1)$ unitary quaternion matrix and
\begin{equation*}\label{5.7}
\bm{\rho}_m=(-1)^m\bar{\mathbf{s}}_m \cdots\bar{\bm{s}}_2 \bar{\mathbf{s}}_1\beta,~~\bm{\tau}_1=c_1\beta,~~\bm{\tau}_i=(-1)^{i-1} c_i \bar{\mathbf{s}}_{i-1}\cdots\bar{\mathbf{s}}_2\bar{\mathbf{s}}_1 \beta,~~ i=2,3,\cdots,m.
\end{equation*}
This implies that
\begin{equation*}
 \bm{\tau}_{m}\!=\!(-1)^{m-1} c_{m} \bar{\mathbf{s}}_{m-1}\cdots \bar{\mathbf{s}}_1 \beta\!=\!c_{m}\bm{\rho}_{m-1}, 
\bm{\rho}_{m}\!=\!(-1)^{m}\bar{\mathbf{s}}_{m}\bar{\bm{s}}_{m-1} \cdots\bar{\bm{s}}_2 \bar{\bm{s}}_1\beta\!=\!-\bar{\mathbf{s}}_{m}\bm{\rho}_{m-1}.
\end{equation*}
By the relations (\ref{5.3}) and (\ref{5.6}), we have
\begin{eqnarray*}
% \nonumber to remove numbering (before each equation)
  \|\beta\mathbf{e}_1\!-\!\widetilde{\mathscr{T}}_m\mathbf{y}\|_2^2\! =\! \|\mathscr{G}(\beta\mathbf{e}_1\!-\!\widetilde{\mathscr{T}}_m\mathbf{y})\|_2^2\!=\!
 \left\|\left[\begin{array}{c}
  \mathscr{R}_m\\
  \mathbf{0}
  \end{array}
  \right]\mathbf{y}-\left[\begin{array}{c}
\bm{t}_m\\ \bm{\rho}_m
\end{array}\right]\right\|_2^2 \!=\!\|\mathscr{R}_m\mathbf{y}\!-\!\mathbf{t}_m\|_2^2\!+\!|\bm{\rho}_m|^2.
\end{eqnarray*}
This gives an unique solution
 $\mathbf{y}_m=\mathscr{R}_m^{-1}\mathbf{t}_m$
of the least squares problem (\ref{5.1}) and
\begin{equation}\label{5.9}
   \|\mathbf{r}_m\|_2=\|\mathscr{P}_{m+1}(\beta \mathbf{e}_1-\widetilde{\mathscr{T}}_m\mathbf{y}_m)\|_2=\|\beta\mathbf{e}_1-\widetilde{\mathscr{T}}_m\mathbf{y}_m\|_2=|\bm{\rho}_m|.
\end{equation}
Denote $\mathscr{N}_m=\mathscr{Q}_m\mathscr{R}_m^{-1}=[\mathbf{n}_1, \mathbf{n}_2,\cdots,  \mathbf{n}_m]$. Thus the desired solution $\mathbf{x}_m$ satisfies
\begin{eqnarray*}\label{5.10}
\mathbf{x}_m&=&\mathbf{x}_0+\mathscr{Q}_m\mathbf{y}_m
=\mathbf{x}_0+\mathscr{Q}_m\mathscr{R}_m^{-1}\mathbf{y}_m
=\mathbf{x}_0+\mathscr{N}_m\mathbf{y}_m\\
&=&\mathbf{x}_0+[\mathscr{N}_{m-1},\mathbf{n}_{m}]\left[
\begin{array}{c}
\mathbf{t}_{m-1}\\
\bm{\tau}_m
\end{array}
\right]
=\mathbf{x}_{m-1}+\mathbf{n}_m\bm{\tau}_m.
\end{eqnarray*}

Next, we give the recursive formula of  $\mathbf{x}_m$. When the length of quaternion tridiagonalization decomposition is increased from $m$  to $m+1$, we have
\begin{displaymath}
\widetilde{\mathscr{T}}_{m+1}=\left[\begin{array}{c|c}
\widetilde{\mathscr{T}}_m&\widetilde{\mathbf{t}}_{m+1}\\
\hline
\mathbf{0}&\beta_{m+1}
\end{array}\right],\quad\widetilde{\mathbf{t}}_{m+1}=[0,\cdots,0,\gamma_m,\bm{\alpha}_{m+1}]^{\mathrm{T}},
\end{displaymath}
and then
\begin{equation*}\label{5.11}
\mathscr{R}_{m+1}=\left[\begin{array}{c|c}
\mathscr{R}_m& \mathbf{h}_{m+1}\\
\hline
\mathbf{0}&\bm{\sigma}_{m+1}
\end{array}\right],\quad \mathbf{h} _{m+1}=[0,\cdots,0,\bm{\varepsilon}_{m-1},\bm{\delta}_m]^{\mathrm{T}}.
\end{equation*}
Since
\begin{scriptsize}
\begin{eqnarray*}
% \nonumber to remove numbering (before each equation)
&& \!\!\!\!  \widetilde{\mathscr{T}}_{m+1}\!= \!\left[
                          \begin{array}{cccccc|c}
                           \!\! \bm{\alpha}_1\! \!&\!\! \gamma_1 \!\!&   &   &   &   &   \\
                           \!\! \beta_1\! \!&\!\! \bm{\alpha}_2\! \!&\!\! \gamma_2\! &   &   &   &   \\
                                & \ddots &\ddots & \ddots &   & & \\
   &  &\!\! \beta_{m-3}\! &\! \alpha_{m-2}\! &\! \gamma_{m-2}\! &  & \\
                              &   &  &\!\! \beta_{m-2}\! &\! \bm{\alpha}_{m-1}\! & \!\gamma_{m-1}\! &   \\
                              &   &   &   &\! \beta_{m-1}\! &\! \bm{\alpha}_{m}\! &\! \gamma_{m}\! \\
                             &   &   &   &   &\! \beta_{m}\! &\! \bm{\alpha}_{m+1}\! \\\hline
                              &   &   &   &  &   &\!  \beta_{m+1} \!\\
                          \end{array}
                         \right]\!\underrightarrow{\mathscr{G}_1}\!
\left[ \begin{array}{cccccc|c}
                         \!\!   \bm{\sigma}_1\! & \!\bm{\delta}_1\! \!&\!\! \bm{\varepsilon}_1 \!\! &   &   &   &   \\
                              \!\!\! &\! \!\bm{\widetilde{\alpha}}_2\!\!&\!\! \bm{\widehat{\gamma}}_2\! \!&   &   &   &   \\
                               \!\!\!   &  &\ddots & \ddots & \ddots  &  &\\
                  \!\!\!  &  &   & \!\!\bm{\alpha}_{m-2}\!\! &\! \!\gamma_{k-2}\!\! &  & \\
                              \!\!\!  &   &  &\!\! \beta_{m-2} \!&\! \bm{\alpha}_{m-1} & \gamma_{m-1} &   \\
                             \!\!\!   &   &   &   &\! \beta_{m-1}\!\! & \!\bm{\alpha}_{m}\! &\!\! \gamma_{m}\!\! \\
                             \!\!\!  &   &   &   &   &\! \beta_{m} \!&\! \bm{\alpha}_{m+1}\! \!\\\hline
                              \!\!\!  &   &   &   &  &   & \! \beta_{m+1}  \\
                          \end{array}
                       \right]
                      \!\underrightarrow{\mathscr{G}_{2}}\cdots
\end{eqnarray*}
\begin{eqnarray*}
&&\underrightarrow{\mathscr{G}_{m-2}}
\left[\begin{array}{cccccc|c}
                          \!\! \bm{\sigma}_1 & \bm{\delta}_1 & \bm{\varepsilon}_1  &   &   &   &   \\
                             \!\!\!\!  & \bm{\sigma}_2 & \bm{\delta}_2 &  \bm{\varepsilon}_2 &   &   &   \\
                              \!\!\!\!    &  &\ddots & \ddots & \ddots  &  &\\
                            \!\!\!\!  &  &   & \bm{\sigma}_{m-2} & \bm{\delta}_{m-2} & \bm{\varepsilon}_{m-2} & \\
                              \!\!\!\!  &   &  &    & \bm{\widetilde{\alpha}}_{m-1} & \bm{\widehat{\gamma}}_{m-1} &   \\
                              \!\!\!\!  &   &   &   & \beta_{m-1} & \bm{\alpha}_{m} & \gamma_{m} \\
                             \!\!\!\!  &   &   &   &   & \beta_{m} & \bm{\alpha}_{m+1} \\\hline
                            \!\!\!\!    &   &   &   &  &   &  \beta_{m+1} \\
                          \end{array}
                         \right]
                         \underrightarrow{\mathscr{G}_{m-1}}\!
\left[\begin{array}{cccccc|c}
                          \!\!    \bm{\sigma}_1\! \!&\! \bm{\delta}_1 \!\!& \!\bm{\varepsilon}_1 \!\! &   &   &   &   \\
                             \!\!\!\!  &\! \!\bm{\sigma}_2\!\! &\! \!\bm{\delta}_2\! & \! \bm{\varepsilon}_2\!\! &   &   &   \\
                               \!\!\!\!  &  &\ddots & \ddots & \ddots  & & \\
  \!\!\!\!  &  &   &\! \!\bm{\sigma}_{m-2}\! &\! \bm{\delta}_{m-2}\! &\! \bm{\varepsilon}_{m-2} \!\!& \\
    \!\!\!\!   &   &  &  &\!\! \bm{\sigma}_{m-1} \!&\! \bm{\delta}_{m-1}\!\! &\! \!\bm{\varepsilon}_{m-1}\!\!  \\
                             \!\!\!\!   &   &   &   &   &\! \bm{\widetilde{\alpha}}_{m}\! &\! \bm{\widehat{\gamma}}_{m}\! \\
                             \!\!\!\!  &   &   &   &   &\! \beta_{m} \!&\! \bm{\alpha}_{m+1}\! \\\hline
                             \!\!\!\!   &   &   &   &  &   & \! \beta_{m+1}\! \\
                          \end{array}
                        \right]                         
\end{eqnarray*}
\begin{eqnarray*}
&&~
                        \!\underrightarrow{\mathscr{G}_{m}}\!
\left[\!\!\begin{array}{cccccc|c}
                            \bm{\sigma}_1\! & \bm{\delta}_1\!\! & \!\!\bm{\varepsilon}_1 \!\! &   &   &   &   \\
                             &\! \!\bm{\sigma}_2 \!\!&\!\! \bm{\delta}_2\!\! &\! \! \bm{\varepsilon}_2\! \!&   &   &   \\
                                &  &\ddots & \ddots & \ddots  &  &\\
                           &  &   &\! \!\bm{\sigma}_{m-2}\! & \!\bm{\delta}_{m-2}\! &\! \bm{\varepsilon}_{m-2}\! & \\
                            &   &  &  &\! \!\bm{\sigma}_{m-1}\! & \!\bm{\delta}_{m-1}\! &\! \bm{\varepsilon}_{m-1}\! \! \\
                              &   &   &   &   &\! \bm{\sigma}_{m} \!&\! \bm{\delta}_{m} \!\\
                             &   &   &   &   &   &\! \bm{\widetilde{\alpha}}_{m+1}\! \\\hline
                              &   &   &   &  &   & \! \beta_{m+1}\! \\
                          \end{array}
                      \!\!  \right]
                      \!\underrightarrow{\mathscr{G}_{m+1}}\!
\left[\!\!\begin{array}{cccccc|c}
                            \bm{\sigma}_1\! & \bm{\delta}_1\!\! & \!\!\bm{\varepsilon}_1 \!\! &   &   &   &   \\
                             &\! \!\bm{\sigma}_2 \!\!&\!\! \bm{\delta}_2\!\! &\! \! \bm{\varepsilon}_2\! \!&   &   &   \\
                                &  &\ddots & \ddots & \ddots  &  &\\
                           &  &   &\! \!\bm{\sigma}_{m-2}\! & \!\bm{\delta}_{m-2}\! &\! \bm{\varepsilon}_{m-2}\! & \\
                            &   &  &  &\! \!\bm{\sigma}_{m-1}\! & \!\bm{\delta}_{m-1}\!&\! \bm{\varepsilon}_{m-1}\!  \\
                              &   &   &   &   &\! \bm{\sigma}_{m} \!&\! \bm{\delta}_{m} \!\\
                             &   &   &   &   &   &\! \bm{\sigma}_{m+1}\! \\\hline
                              &   &   &   &  &   & \! \!0\!\! \\
                          \end{array}
                      \!\!  \right],
\end{eqnarray*}
\end{scriptsize}
we have
\begin{equation*}
\left[
    \begin{array}{cc}
      c_{m-1} & \mathbf{s}_{m-1} \\
      -\bar{\mathbf{s}}_{m-1} & c_{m-1} \\
    \end{array}
  \right]\left[
    \begin{array}{c}
      0 \\
      \gamma_m \\
    \end{array}
  \right]=\left[
    \begin{array}{c}
      \bm{\varepsilon}_{m-1} \\
      \bm{\widehat{\gamma}}_m \\
    \end{array}
  \right], ~~\left[
    \begin{array}{cc}
      c_{m} & \mathbf{s}_{m} \\
      -\bar{\mathbf{s}}_{m} & c_{m} \\
    \end{array}
  \right]\left[
    \begin{array}{c}
      \bm{\widehat{\gamma}}_m \\
      \bm{\alpha}_{m+1} \\
    \end{array}
  \right]=\left[
    \begin{array}{c}
      \bm{\delta}_{m} \\
      \bm{\widetilde{\alpha}}_{m+1} \\
    \end{array}
  \right]
\end{equation*}
and
\begin{equation}\label{5.12}
\left[\begin{array}{cc}
c_{m+1}&\mathbf{s}_{m+1}\\
-\bar{\mathbf{s}}_{m+1}&c_{m+1}
\end{array}\right]\left[\begin{array}{c}
\bm{\widetilde{\alpha}}_{m+1}\\
\beta_{m+1}
\end{array}\right]=\left[\begin{array}{c}
\bm{\sigma}_{m+1}\\0
\end{array}\right].
\end{equation}
Thus
\begin{align*}%\label{a5.6.26}
&\bm{\varepsilon}_{m-1}=\mathbf{s}_{m-1}\gamma_m,\quad \bm{\widehat\gamma}_m=c_{m-1}\gamma_m,\nonumber\\
&\bm{\delta}_m=c_m\bm{\widehat{\gamma}}_m+\mathbf{s}_m\bm{\alpha}_{m+1},\quad\bm{\widetilde{\alpha}}_{m+1}
=-\bar{\mathbf{s}}_m\bm{\widehat\gamma}_m+c_m\bm{\alpha}_{m+1},
\end{align*}
According to (\ref{5.12}), we compute
\begin{equation*}\label{5.13}
  c_{m+1}=\dfrac{|\bm{\widetilde{\alpha}}_{m+1}|}{\sqrt{|\bm{\widetilde{\alpha}}_{m+1}|^2+\beta_{m+1}^2}},
  \mathbf{s}_{m+1}=\dfrac{\bm{\widetilde{\alpha}}_{m+1}}{|\bm{\widetilde{\alpha}}_{m+1}|}
  \dfrac{\beta_{m+1}}{\sqrt{|\bm{\widetilde{\alpha}}_{m+1}|^2+\beta_{m+1}^2}},
\end{equation*}
and
\begin{equation*}
  \bm{\sigma}_{m+1}=\dfrac{\bm{\widetilde{\alpha}}_{m+1}}{|\bm{\widetilde{\alpha}}_{m+1}|}
\sqrt{|\bm{\widetilde{\alpha}}_{m+1}|^2+\beta_{m+1}^2}.
\end{equation*}

Since $\mathscr{N}_m=\mathscr{Q}_m\mathscr{R}_m^{-1}$, we have $\mathscr{Q}_m=\mathscr{N}_m\mathscr{R}_m$. Direct calculations give
\begin{align*}
& \mathbf{n}_1\bm{\sigma}_1= \mathbf{q}_1,\\
& \mathbf{n}_1\bm{\delta}_1+ \mathbf{n}_2\bm{\sigma}_2= \mathbf{q}_2,\\
&  \mathbf{n}_{i-2}\bm{\varepsilon}_{i-2}+  \mathbf{n}_{i-1}\bm{\delta}_{i-1}+ \mathbf{n}_i\bm{\sigma}_i = \mathbf{q}_i,\,i=3,4,\cdots,m.
\end{align*}
This implies that
\begin{align}\label{4.15}
\left\{
  \begin{array}{l}
 \mathbf{n}_1= \mathbf{q}_1 \bm{\sigma}_1^{-1},\\
  \mathbf{n}_2=[  \mathbf{q}_2- \mathbf{n}_1\bm{\delta}_1] \bm{\sigma}_2^{-1},\\
  \mathbf{n}_i=[ \mathbf{q}_i-  \mathbf{n}_{i-2}\bm{\varepsilon}_{i-2}- \mathbf{n}_{i-1}\bm{\delta}_{i-1} ]\bm{\sigma}_i^{-1},~~ i=3,4,\cdots,m.
  \end{array}
\right.
\end{align}
By the above discussion, the QNHERQR method is summarized as in Algorithm \ref{alg5.1}.

\begin{algorithm}[!ht]
\caption{The QNHERQR method}
Input the coefficient matrix $\mathscr{A}\in\mathbb{Q}^{n\times n}$ and the right-hand side vector $\mathbf{b}\in\mathbb{Q}^n$. Choose an initial vector $\mathbf{x}_0\in\mathbb{Q}^n$ and the tolerance error $\varepsilon>0$.
\label{alg5.1}
\begin{algorithmic}[1]
\State  Compute $\mathbf{r}_0=\mathbf{b}-\mathscr{A}\mathbf{x}_0$, $\beta=\|\mathbf{r}_0\|_2\neq0$, and $\mathbf{q}_1=\mathbf{r}_0/\beta$;
\State  Choose $\mathbf{p}_1$ such that $\|\mathbf{p}_1\|_2=1$, e.g., $\mathbf{p}_1=\mathbf{q}_1$;
\State    $\bm{\alpha}_{1}=\langle\mathbf{A} \mathbf{q}_1,\mathbf{p}_1\rangle$,
$\widetilde{\mathbf{p}}=\mathbf{A} \mathbf{q}_1-\mathbf{p}_{1}\bm{\alpha}_{1}$,
         $\widetilde{\mathbf{q}}=\mathscr{A}^*\mathbf{p}_1-\mathbf{q}_1\bm{\alpha}_1^*$;
\State   $\beta_1=\|\widetilde{\mathbf{p}}\|_2$,  $\gamma_1=\|\widetilde{\mathbf{q}}\|_2$;

\State   $\mathbf{p}_{2}=\widetilde{\mathbf{p}}/\beta_1$, $\mathbf{q}_{2}=\widetilde{\mathbf{q}}/\gamma_1$;
\State  $c_{ 1}=\dfrac{|\bm{\alpha}_{ 1}|}{\sqrt{|\bm{\alpha}_{ 1}|^2+\beta_{ 1}^2}}$,
  $\mathbf{s}_{ 1}=\dfrac{\bm{\alpha}_{ 1}}{|\bm{\alpha}_{ 1}|}
  \dfrac{\beta_{ 1}}{\sqrt{|\bm{\alpha}_{ 1}|^2+\beta_{1}^2}}$,
  $\bm{\sigma}_{ 1}=\dfrac{\bm{\alpha}_{ 1}}{|\bm{\alpha}_{ 1}|}
\sqrt{|\bm{\alpha}_{ 1}|^2+\beta_{ 1}^2}$;

\State  $\mathbf{n}_1= \mathbf{q}_1 \bm{\sigma}_1^{-1}$;

\State  $\bm{\tau}_1=c_1\beta$, $\bm{\rho}_1=-\bar{\mathbf{s}}_1\beta$;
\State  $c_0=1$, $s_0=0$, $\mathbf{x}_1=\mathbf{x}_0+\mathbf{n}_1\bm{\tau}_1$;
\State   $m=1$;

\State   ${\bf while}~(|\bm{\rho}_m|>\beta \varepsilon)$
\State  \quad $\bm{\alpha}_{m+1}=\langle\mathbf{A} \mathbf{q}_{m+1},\mathbf{p}_{m+1}\rangle$;
\State  \quad  $\widetilde{\mathbf{p}}=\mathbf{A} \mathbf{q}_{m+1}-\mathbf{p}_{m+1}\bm{\alpha}_{m+1}-\mathbf{p}_{m}\gamma_{m}$,
$\widetilde{\mathbf{q}}=\mathscr{A}^*\mathbf{p}_{m+1}-\mathbf{q}_{m+1}\bm{\alpha}_{m+1}^*
-\mathbf{q}_{m}\beta_{m}$;
\State  \quad $\beta_{m+1}=\|\widetilde{\mathbf{p}}\|_2$, $\gamma_{m+1}=\|\widetilde{\mathbf{q}}\|_2$;

\State  \quad \textbf{if} $\beta_{m+1}=0$ or $\gamma_{m+1}=0$
\State  \quad\quad \quad \textbf{stop}
\State  \quad  \textbf{else}
\State  \quad\quad\quad   $\mathbf{p}_{m+2}=\widetilde{\mathbf{p}}/\beta_{m+1}$, $\mathbf{q}_{m+2}=\widetilde{\mathbf{q}}/\gamma_{m+1}$;
\State  \quad  \textbf{end}
\State  \quad $\bm{\varepsilon}_{m-1}=\mathbf{s}_{m-1}\gamma_m$, $\bm{\widehat\gamma}_m=c_{m-1}\gamma_m$;
\State  \quad $\bm{\delta}_m=c_m\bm{\widehat{\gamma}}_m+\mathbf{s}_m\bm{\alpha}_{m+1}$, $\bm{\widetilde{\alpha}}_{m+1}
=-\bar{\mathbf{s}}_m\bm{\widehat\gamma}_m+c_m\bm{\alpha}_{m+1}$;

\State  \quad  \textbf{if} $\bm{\widetilde{\alpha}}_{m+1}\neq\mathbf{0}$
\State  \quad  \quad $c_{m+1}\!=\!\dfrac{|\bm{\widetilde{\alpha}}_{m+1}|}{\sqrt{|\bm{\widetilde{\alpha}}_{m+1}|^2\!+\!\beta_{m+1}^2}}$,
  $\mathbf{s}_{m+1}\!=\!\dfrac{\bm{\widetilde{\alpha}}_{m+1}}{|\bm{\widetilde{\alpha}}_{m+1}|}
  \dfrac{\beta_{m+1}}{\sqrt{|\bm{\widetilde{\alpha}}_{m+1}|^2\!+\!\beta_{m+1}^2}}$,
$\bm{\sigma}_{m+1}\!=\!\dfrac{\bm{\widetilde{\alpha}}_{m+1}}{|\bm{\widetilde{\alpha}}_{m+1}|}
\sqrt{|\bm{\widetilde{\alpha}}_{m+1}|^2\!+\!\beta_{m+1}^2}$;

\State  \quad  \textbf{else}
\State  \quad  \quad  $c_{m+1}=0$, $\mathbf{s}_{m+1}=1$, $\bm{\sigma}_{m+1}=\beta_{m+1}$;

\State  \quad  \textbf{end}

\State  \quad $\bm{\tau}_{m+1} =c_{m+1}\bm{\rho}_{m}$,
$\bm{\rho}_{m+1} =-\bar{\mathbf{s}}_{m+1}\bm{\rho}_{m}$;

\State  \quad $\mathbf{n}_{m+1}=[ \mathbf{q}_{m+1}-  \mathbf{n}_{m-1}\bm{\varepsilon}_{m-1}- \mathbf{n}_{m}\bm{\delta}_{m} ]\bm{\sigma}_{m+1}^{-1}$;

\State  \quad $\mathbf{x}_{m+1}=\mathbf{x}_{m}+\mathbf{n}_{m+1}\bm{\tau}_{m+1}$;

\State  \quad $m=m+1$;

\State  ${\bf end}$
\end{algorithmic}
\end{algorithm}

\subsection{Breakdown of QNHERLQ and QNHERQR}
The possibility of a brea-\\kdown in the QNHERLQ and QNHERQR  algorithms (Algorithms \ref{alg5.2} and \ref{alg5.1}) is in the case when $\mathbf{q}_{m+1}=\mathbf{0}$ (or $\mathbf{p}_{m+1}=\mathbf{0}$), i.e.,  $\beta_m=0$ (or $\gamma_m=0$) at a
given step $m$ in the quaternion Saunders-Simon-Yip tridiagonalization.  In this situation, the algorithm stops because the next step of the tridiagonalization procedure can not go on.  In this case, we get the exact solution of the corresponding quaternion linear systems.

\begin{theorem}\label{break1}
If $\mathbf{p}_{m+1}=\mathbf{0}$, then {\rm QNHERLQ} and {\rm QNHERQR} break down at step $m$. In this case, $\mathbf{x}_m$ is the exact solution of the quaternion linear systems {\rm(\ref{1.1})}.
\end{theorem}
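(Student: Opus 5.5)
The approach is to exploit the fact that $\mathbf{p}_{m+1}=\mathbf{0}$ means $\beta_m=0$, so the Saunders-Simon-Yip relation (\ref{eq10_1}) becomes the exact identity $\mathscr{A}\mathscr{Q}_m=\mathscr{P}_m\mathscr{T}_m$. The breakdown claim itself is immediate from Algorithm \ref{alg3.1}: the normalization $\mathbf{p}_{m+1}=\widetilde{\mathbf{p}}/\beta_m$ cannot be performed, so the tridiagonalization, and hence QNHERLQ and QNHERQR, cannot proceed to step $m+1$. It remains to show that the iterate $\mathbf{x}_m$ already available at step $m$ is exact, i.e.\ $\mathbf{r}_m=\mathbf{0}$. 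Throughout I use the convention of the derivations in Section 4: $\mathbf{p}_1=\mathbf{r}_0/\beta$ and $\mathbf{x}_m\in\mathbf{x}_0+\mathrm{span}(\mathbf{q}_1,\dots,\mathbf{q}_m)$.

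The key step is to show that $\mathscr{T}_m$ is nonsingular; this is the only nontrivial point. Suppose $\mathscr{T}_m\mathbf{y}=\mathbf{0}$ for some $\mathbf{y}\in\mathbb{Q}^m$. Then $\mathscr{A}\mathscr{Q}_m\mathbf{y}=\mathscr{P}_m\mathscr{T}_m\mathbf{y}=\mathbf{0}$. Since $\mathscr{A}$ is invertible, $\mathscr{Q}_m\mathbf{y}=\mathbf{0}$. By Proposition \ref{prop1}, $\mathscr{Q}_m^*\mathscr{Q}_m=\mathbf{I}_m$, so $\mathbf{y}=\mathscr{Q}_m^*\mathscr{Q}_m\mathbf{y}=\mathbf{0}$. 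Right linearity over $\mathbb{Q}$ is all that is used, so noncommutativity causes no difficulty.

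For QNHERLQ, the Galerkin system (\ref{5.16}) therefore has the unique solution $\mathbf{y}_m=\mathscr{T}_m^{-1}\beta\mathbf{e}_1$. Substituting into (\ref{4.2}) with $\beta_m=0$ gives
\[
\mathbf{r}_m=\mathscr{P}_m(\beta\mathbf{e}_1-\mathscr{T}_m\mathbf{y}_m)-\beta_m\mathbf{p}_{m+1}\mathbf{e}_m^*\mathbf{y}_m=\mathbf{0},
\]
which is also consistent with (\ref{ree}). I would also remark on the practical recursion. Because $\mathscr{T}_m=\widetilde{\mathscr{L}}_m\mathscr{V}_m$ with $\mathscr{V}_m$ unitary, $\widetilde{\mathscr{L}}_m$ is invertible, so $\bm{\nu'}_m\neq 0$. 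Hence $\mathbf{x}_m=\widetilde{\mathbf{x}}_{m-1}+\widetilde{\mathbf{w}}_m\widetilde{\bm{\zeta}}_m$ in (\ref{e5921}) is well defined and equals (\ref{5.17}).

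For QNHERQR, the last row $\beta_m\mathbf{e}_m^*$ of $\widetilde{\mathscr{T}}_m$ in (\ref{5.2}) vanishes. Thus $\|\beta\mathbf{e}_1-\widetilde{\mathscr{T}}_m\mathbf{y}\|_2=\|\beta\mathbf{e}_1-\mathscr{T}_m\mathbf{y}\|_2$, which attains the value $0$ at $\mathbf{y}=\mathscr{T}_m^{-1}\beta\mathbf{e}_1$. Since $\widetilde{\mathscr{T}}_m$ has full column rank, this minimizer of (\ref{5.1}) is unique, so by (\ref{5.9}) $\|\mathbf{r}_m\|_2=|\bm{\rho}_m|=0$. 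Equivalently, the final Givens step has $\beta_m=0$, so $\mathbf{s}_m=0$ and $\bm{\rho}_m=-\bar{\mathbf{s}}_m\bm{\rho}_{m-1}=0$. In both methods, therefore, $\mathscr{A}\mathbf{x}_m=\mathbf{b}$, and the two methods produce the same iterate $\mathbf{x}_0+\mathscr{Q}_m\mathscr{T}_m^{-1}\beta\mathbf{e}_1$.
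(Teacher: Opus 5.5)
Your proposal is correct and follows essentially the same argument as the paper. Setting $\beta_m=0$ removes the last row of $\widetilde{\mathscr{T}}_m$, so the least-squares solution of (\ref{5.1}) coincides with the Galerkin solution of (\ref{5.16}), and the residual then vanishes by (\ref{4.2}) and (\ref{ree}). Your one addition is an explicit proof that $\mathscr{T}_m$ is nonsingular, using the invertibility of $\mathscr{A}$ together with $\mathscr{Q}_m^*\mathscr{Q}_m=\mathbf{I}_m$; the paper uses this tacitly when it treats (\ref{5.16}) as solvable, and it is also what guarantees $\bm{\nu'}_m\neq 0$ and $\bm{\sigma}_m\neq 0$ in the practical recursions.
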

\begin{proof}
If $\mathbf{p}_{m+1}=\mathbf{0}$, then $\beta_m=0$ and the last row of $\widetilde{\mathscr{T}}_m$ is zero. Hence the least squares solution $\mathbf{y}_m$ of the quaternion least squares problem (\ref{5.1}) is just the solution of the quaternion linear systems $\mathscr{T}_m\mathbf{y}=\beta\mathbf{e}_1$ computed in (\ref{5.16}). Hence, by the relation (\ref{ree}), it follows that $\|\mathbf{r}_m\|_2=0$, i.e., $\mathbf{x}_m$ is the exact solution.
\end{proof}

Notice that both QNHERLQ and QNHERQR can simultaneously approximate the solution of the following quaternion linear systems
\begin{equation}\label{trans1}
 \mathscr{A}^*\mathbf{z}=\mathbf{c}.
\end{equation}
In fact,  for a given initial vector $\mathbf{z}_0$, we let $\gamma=\|\mathbf{c}-\mathscr{A}^*\mathbf{z}_0\|_2$ and choose $\mathbf{q}_1=(\mathbf{c}-\mathscr{A}^*\mathbf{z}_0)/\gamma$ instead of an arbitrary $\mathbf{q}_1$, then the approximation solution of (\ref{trans1}) can be generated by
\begin{equation}\label{trans2}
 \mathbf{z}_m=\mathbf{z}_0+\mathscr{P}_m\mathbf{h}_m,
\end{equation}
where $\mathbf{h}_m$ is the solution of the quaternion linear systems
\begin{equation}\label{trans4}
\mathscr{T}_m^*\mathbf{h}=\gamma\mathbf{e}_1
\end{equation}
 or the quaternion least squares problem, i.e.,
\begin{equation}\label{trans3}
  \mathbf{h}_m=\arg\min\|\gamma \mathbf{e}_1-\widetilde{\mathscr{S}}_m \mathbf{h}\|_2~~\text{with}~~
  \widetilde{\mathscr{S}}_m=
  \left[\begin{array}{c}
  \mathscr{T}_m^*\\
  \gamma_m\mathbf{e}_m^*
  \end{array}\right].
\end{equation}

Similarly, we obtain the following result.
\begin{theorem}\label{break2}
If $\mathbf{q}_{m+1}=\mathbf{0}$, then {\rm QNHERLQ} and {\rm QNHERQR} break down at step $m$. In this case, $\mathbf{z}_m$ is the exact solution to the quaternion linear systems {\rm(\ref{trans1})}.
\end{theorem}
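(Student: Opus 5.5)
The plan mirrors the proof of Theorem \ref{break1}, with the roles of $\mathscr{A}$ and $\mathscr{A}^*$, of $\mathscr{P}$ and $\mathscr{Q}$, and of $\beta_m$ and $\gamma_m$ exchanged. The key identity is the second recurrence (\ref{eq10_2}),
\begin{equation*}
\mathscr{A}^*\mathscr{P}_m=\mathscr{Q}_m\mathscr{T}_m^*+\gamma_m\mathbf{q}_{m+1}\mathbf{e}_m^*,
\end{equation*}
which plays for the dual system (\ref{trans1}) the role that (\ref{eq10_1}) plays for (\ref{1.1}). If $\mathbf{q}_{m+1}=\mathbf{0}$, then $\gamma_m=\|\widetilde{\mathbf{q}}\|_2=0$. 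Line 9 of Algorithm \ref{alg3.1} (and the corresponding test in Algorithms \ref{alg5.2} and \ref{alg5.1}) then stops the procedure, so both methods break down at step $m$.

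For exactness, I will compute the dual residual for any $\mathbf{z}=\mathbf{z}_0+\mathscr{P}_m\mathbf{h}$. Since $\mathbf{q}_1=(\mathbf{c}-\mathscr{A}^*\mathbf{z}_0)/\gamma$,
\begin{equation*}
\mathbf{c}-\mathscr{A}^*\mathbf{z}=\gamma\mathbf{q}_1-\mathscr{A}^*\mathscr{P}_m\mathbf{h}
=\mathscr{Q}_m(\gamma\mathbf{e}_1-\mathscr{T}_m^*\mathbf{h})-\gamma_m\mathbf{q}_{m+1}\mathbf{e}_m^*\mathbf{h}.
\end{equation*}
This is the analogue of (\ref{4.2}). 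With $\gamma_m=0$ the last term vanishes. Also, the last row $\gamma_m\mathbf{e}_m^*$ of $\widetilde{\mathscr{S}}_m$ in (\ref{trans3}) is zero, so the least squares problem (\ref{trans3}) reduces to minimizing $\|\gamma\mathbf{e}_1-\mathscr{T}_m^*\mathbf{h}\|_2$.

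The remaining point is that $\mathscr{T}_m^*\mathbf{h}=\gamma\mathbf{e}_1$ is solvable, so that the LQ-type solution (\ref{trans4}) and the QR-type solution (\ref{trans3}) coincide and give zero residual. This is the main obstacle, and I handle it as follows. Since $\gamma_m=0$ while $\beta_m$ need not be zero, I argue via invariance. With $\gamma_m=0$, $\mathrm{span}(\mathbf{q}_1,\dots,\mathbf{q}_m)$ is $\mathscr{A}^*$-related to $\mathrm{span}(\mathbf{p}_1,\dots,\mathbf{p}_m)$ by $\mathscr{A}^*\mathscr{P}_m=\mathscr{Q}_m\mathscr{T}_m^*$. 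Because $\mathscr{A}$ is invertible and $\mathscr{P}_m$, $\mathscr{Q}_m$ have orthonormal columns by Proposition \ref{prop1}, the matrix $\mathscr{T}_m^*=\mathscr{Q}_m^*\mathscr{A}^*\mathscr{P}_m$ has trivial kernel: if $\mathscr{T}_m^*\mathbf{h}=\mathbf{0}$ then $\mathscr{A}^*\mathscr{P}_m\mathbf{h}=\mathbf{0}$, so $\mathscr{P}_m\mathbf{h}=\mathbf{0}$ and hence $\mathbf{h}=\mathbf{0}$. Thus $\mathscr{T}_m^*$ is an invertible square quaternion matrix. The equation then has the unique solution $\mathbf{h}_m$, which is also the unique minimizer of (\ref{trans3}). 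Substituting into the residual identity above gives $\mathbf{c}-\mathscr{A}^*\mathbf{z}_m=\mathbf{0}$, i.e., $\mathbf{z}_m$ in (\ref{trans2}) is the exact solution of (\ref{trans1}).

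I would also remark that the same invertibility argument, applied to (\ref{eq10_1}) with $\beta_m=0$, justifies the solvability step that is tacitly used in Theorem \ref{break1}.
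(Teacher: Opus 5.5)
Your proposal is correct and follows essentially the same route as the paper: $\gamma_m=0$ kills the last row of $\widetilde{\mathscr{S}}_m$, so the least-squares and Galerkin definitions of $\mathbf{h}_m$ coincide, and the dual residual $\mathscr{Q}_m(\gamma\mathbf{e}_1-\mathscr{T}_m^*\mathbf{h}_m)$ vanishes. Your kernel argument that $\mathscr{T}_m^*$ is invertible (from $\mathscr{A}^*\mathscr{P}_m=\mathscr{Q}_m\mathscr{T}_m^*$, invertibility of $\mathscr{A}$, and orthonormality of $\mathscr{P}_m$) supplies the solvability step the paper takes for granted, and your direct residual identity avoids the paper's slip of writing $|\gamma_m|^2$ where $|\gamma_m\mathbf{e}_m^*\mathbf{h}_m|^2$ is meant.
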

\begin{proof}
If $\mathbf{q}_{m+1}=\mathbf{0}$, then $\gamma_m=0$ and the last row of $\widetilde{\mathscr{S}}_m$ is zero. It follows that the least squares solution $\mathbf{h}_m$ of the quaternion least squares problem (\ref{trans3}) is just the solution of the quaternion linear systems $\mathscr{T}_m^*\mathbf{h}=\gamma\mathbf{e}_1$ computed in (\ref{trans4}). Hence    \begin{eqnarray*}
% \nonumber to remove numbering (before each equation)
\|\mathbf{c}-\mathscr{A}^*\mathbf{z}_m \|_2^2&=&\|\mathscr{Q}_{m+1}(\gamma \mathbf{e}_1-\widetilde{\mathscr{S}}_m\mathbf{h}_m) \|_2^2
=\|\gamma \mathbf{e}_1-\widetilde{\mathscr{S}}_m\mathbf{h}_m \|_2^2\\
&=&\|\gamma \mathbf{e}_1- \mathscr{T} _m^*\mathbf{h}_m \|_2^2+|\gamma_m|^2=|\gamma_m|^2=0,
\end{eqnarray*}
which shows that $\mathbf{z}_m$ is the exact solution of the quaternion linear systems (\ref{trans1}).
\end{proof}

\subsection{The choice of the initial vector}
When the initial vector $\mathbf{q}_1$ is selected randomly, the quaternion linear systems (\ref{trans1}) may be solved before the original quaternion linear systems (\ref{1.1}), and thus an unwanted breakdown would occur. However, similar to the quaternion Lanczos tridiagonalization procedure for Hermitian matrices,  an exactly zero $\beta_{m}=0$ is unlikely to occur in practice. Therefore, Theorems \ref{break1} and \ref{break2} are mainly of theoretical interest.
And, from Theorems \ref{break1} and \ref{break2}, we know that the simplest choice of the initial vector is to take $\mathbf{q}_1=\mathbf{p}_1=\mathbf{r}_0/\|\mathbf{r}_0\|_2$ or $\mathbf{q}_1=\mathbf{\overline{r}}_0/\|\mathbf{r}_0\|_2$.

In the following, we study how to select $\mathbf{q}_1$ for obtaining the desirable properties of QNHERLQ and QNHERQR. Since $\mathscr{T}_m$ is unique determined by $\mathbf{p}_1$, $\mathbf{q}_1$ and $\mathscr{A}$, it is promising that $\mathscr{T}_m$  is a real symmetric tridiagonal matrix by choosing a special $\mathbf{q}_1$.

\begin{theorem}\label{initial}
If $\mathbf{q}_1=\sqrt{\mathscr{A}^*\mathscr{A}}\mathscr{A}^{-1}\mathbf{p}_1$, then   a real symmetric tridiagonal matrix $\mathscr{T}_m$ is obtained by the quaternion Saunders-Simon-Yip tridiagonalization algorithm.
\end{theorem}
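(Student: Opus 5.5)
The plan is to recognize $\sqrt{\mathscr{A}^*\mathscr{A}}\mathscr{A}^{-1}$ as the adjoint of the unitary factor in a quaternion polar decomposition of $\mathscr{A}$. I would then show by induction that the two recurrences of Algorithm \ref{alg3.1} are the same recurrence, with one sequence being the image of the other under that unitary.

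\textbf{Step 1: polar factor.} Since $\mathscr{A}$ is invertible, $\mathscr{A}^*\mathscr{A}$ is Hermitian positive definite, because $\mathbf{x}^*\mathscr{A}^*\mathscr{A}\mathbf{x}=\|\mathscr{A}\mathbf{x}\|_2^2>0$ for $\mathbf{x}\neq\mathbf{0}$. Hence, by the preliminaries, $\mathscr{H}:=\sqrt{\mathscr{A}^*\mathscr{A}}=\mathscr{U}\sqrt{\Sigma}\mathscr{U}^*$ is defined. It is Hermitian, invertible, and satisfies $\mathscr{H}^2=\mathscr{A}^*\mathscr{A}$. Set $\mathscr{O}:=\mathscr{A}\mathscr{H}^{-1}$. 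Then
\[
\mathscr{O}^*\mathscr{O}=\mathscr{H}^{-1}\mathscr{A}^*\mathscr{A}\mathscr{H}^{-1}=\mathbf{I}.
\]
For square quaternion matrices a one-sided inverse is two-sided (this can be checked through the real representation $\mathcal{R}$), so $\mathscr{O}$ is unitary and $\mathscr{A}=\mathscr{O}\mathscr{H}$. Moreover $\mathscr{H}\mathscr{A}^{-1}=\mathscr{H}\mathscr{H}^{-1}\mathscr{O}^{*}=\mathscr{O}^*$. The hypothesis therefore reads $\mathbf{q}_1=\mathscr{O}^*\mathbf{p}_1$, i.e. $\mathbf{p}_1=\mathscr{O}\mathbf{q}_1$. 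This also shows $\|\mathbf{q}_1\|_2=1$.

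\textbf{Step 2: induction.} The claim, for all $k$ up to $m$, is
\[
\mathbf{p}_k=\mathscr{O}\mathbf{q}_k,\qquad \bm{\alpha}_k\in\mathbb{R},\qquad \beta_k=\gamma_k,
\]
with the convention $\mathbf{p}_0=\mathbf{q}_0=\mathbf{0}$ and $\beta_0=\gamma_0=1$ for the initial normalization. Given $\mathbf{p}_k=\mathscr{O}\mathbf{q}_k$, we have
\[
\mathscr{A}\mathbf{q}_k=\mathscr{O}\mathscr{H}\mathbf{q}_k,\qquad \mathscr{A}^*\mathbf{p}_k=\mathscr{H}\mathscr{O}^*\mathscr{O}\mathbf{q}_k=\mathscr{H}\mathbf{q}_k.
\]
Consequently
\[
\bm{\alpha}_k=\mathbf{p}_k^*\mathscr{A}\mathbf{q}_k=\mathbf{q}_k^*\mathscr{O}^*\mathscr{O}\mathscr{H}\mathbf{q}_k=\mathbf{q}_k^*\mathscr{H}\mathbf{q}_k .
\]
This is real, since it equals its own conjugate because $\mathscr{H}$ is Hermitian. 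In particular $\bm{\alpha}_k^*=\bm{\alpha}_k$.

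\textbf{Step 3: comparing the two recurrences.} Using Step 2,
\[
\widetilde{\mathbf{p}}=\mathscr{O}\mathscr{H}\mathbf{q}_k-\mathscr{O}\mathbf{q}_k\bm{\alpha}_k-\mathscr{O}\mathbf{q}_{k-1}\gamma_{k-1},
\qquad
\widetilde{\mathbf{q}}=\mathscr{H}\mathbf{q}_k-\mathbf{q}_k\bm{\alpha}_k-\mathbf{q}_{k-1}\beta_{k-1}.
\]
The inductive hypothesis $\beta_{k-1}=\gamma_{k-1}$ gives $\widetilde{\mathbf{p}}=\mathscr{O}\widetilde{\mathbf{q}}$. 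Unitarity of $\mathscr{O}$ then gives $\beta_k=\|\widetilde{\mathbf{p}}\|_2=\|\widetilde{\mathbf{q}}\|_2=\gamma_k$, and hence $\mathbf{p}_{k+1}=\mathscr{O}\mathbf{q}_{k+1}$. This closes the induction.

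\textbf{Conclusion and where care is needed.} $\mathscr{T}_m$ has real diagonal $\bm{\alpha}_k$ and equal real off-diagonals $\beta_k=\gamma_k$, so it is real symmetric tridiagonal. In effect, the procedure reduces to the Hermitian quaternion Lanczos process for $\mathscr{H}$ (compare Remark \ref{rem3.2}). The main point requiring care is the noncommutativity of quaternions: every scalar multiplies vectors from the right, and the scalars being pulled through $\mathscr{O}$ are right scalars. Left matrix multiplication by $\mathscr{O}$ commutes with right scalar multiplication, so this is harmless, but it should be stated explicitly.
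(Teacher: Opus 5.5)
Your argument is correct, and it rests on the same mechanism as the paper's proof. With the QSVD $\mathscr{A}=\mathscr{U}\Sigma\mathscr{V}^*$ used there, the intertwining unitary $\mathscr{U}\mathscr{V}^*$ is exactly your polar factor $\mathscr{O}=\mathscr{A}\mathscr{H}^{-1}$, and the paper likewise sets $\mathbf{p}_j=\mathscr{U}\mathscr{V}^*\mathbf{q}_j$.

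The difference is in how the identification with Algorithm \ref{alg3.1} is justified.

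\textbf{The paper's route.} It runs the Hermitian quaternion Lanczos process on $\sqrt{\mathscr{A}^*\mathscr{A}}$ and takes from the literature that the resulting $\mathscr{T}_m$ is real symmetric. It then transports the Lanczos relation to obtain (\ref{eq10_1}) and invokes the uniqueness of the tridiagonalization for fixed $\mathbf{p}_1,\mathbf{q}_1$. That uniqueness is only asserted in the paper. The paper also never writes out the companion relation (\ref{eq10_2}), which the uniqueness argument needs. It does hold, since $\mathscr{A}^*\mathscr{U}\mathscr{V}^*=\sqrt{\mathscr{A}^*\mathscr{A}}$ and $\mathscr{T}_m^*=\mathscr{T}_m$.

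\textbf{Your route.} Your induction works directly on both recurrences of the algorithm. It derives $\bm{\alpha}_k=\mathbf{q}_k^*\mathscr{H}\mathbf{q}_k\in\mathbb{R}$ and $\beta_k=\gamma_k$ from scratch. As a result:
\begin{itemize}
\item it needs neither the Lanczos citation nor the uniqueness claim;
\item it avoids the QSVD altogether;
\item it shows in passing that $\beta_k=0$ exactly when $\gamma_k=0$, i.e.\ the two possible breakdowns coincide.
\end{itemize}
The paper's version is shorter and makes the link to Remark \ref{rem3.2} explicit; yours is self-contained.

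Two small simplifications are available. Since $\mathscr{O}$ is a product of invertible matrices, $\mathscr{O}^*\mathscr{O}=\mathbf{I}$ already gives $\mathscr{O}^{-1}=\mathscr{O}^*$, so the one-sided-inverse remark is unnecessary. And no convention on $\beta_0,\gamma_0$ is needed, because they only ever multiply $\mathbf{p}_0=\mathbf{q}_0=\mathbf{0}$.
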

\begin{proof}
Let the QSVD decomposition of $\mathscr{A}$ be given by
\begin{equation}\label{qsvd1}
 \mathscr{A}=\mathscr{U}\Sigma\mathscr{V}^*,
\end{equation}
where $\Sigma$ is   real diagonal   and $\mathscr{U}$ and $\mathscr{V}$ are unitary. Then $\sqrt{\mathscr{A}^*\mathscr{A}}=\mathscr{V}\Sigma\mathscr{V}^*$ and $\sqrt{\mathscr{A}\mathscr{A}^*}=\mathscr{U}\Sigma\mathscr{U}^*$. Consider the tridiagonalization of $\sqrt{\mathscr{A}^*\mathscr{A}}$ by the quaternion Lanczos algorithm  \cite{jn2021}  with the initial vector $\mathbf{q}_1$, we obtain
\begin{equation}\label{4.43}
\sqrt{\mathscr{A}^*\mathscr{A}}\mathscr{Q}_m=\mathscr{Q}_m\mathscr{T}_m+\beta_m\mathbf{q}_{m+1}\mathbf{e}_m^*,
\end{equation}
where $\mathscr{T}_m$ is a real symmetric tridiagonal matrix. Then
\begin{eqnarray*}
% \nonumber to remove numbering (before each equation)
\mathscr{V}\Sigma\mathscr{V}^*\mathscr{Q}_m=\mathscr{Q}_m\mathscr{T}_m+\beta_m\mathbf{q}_{m+1}\mathbf{e}_m^*.
\end{eqnarray*}
Multiplying both sides of the above equation by $\mathscr{U}\mathscr{V}^*$ from the left yields
\begin{equation*}
  \mathscr{U}\Sigma\mathscr{V}^*\mathscr{Q}_m=\mathscr{U}\mathscr{V}^*\mathscr{Q}_m\mathscr{T}_m
  +\beta_m\mathscr{U}\mathscr{V}^*\mathbf{q}_{m+1}\mathbf{e}_m^*.
\end{equation*}
Denote $\mathbf{p}_j=\mathscr{U}\mathscr{V}^*\mathbf{q}_j$ for $j=1,2,\cdots,m+1$ and $\mathscr{P}_m=[\mathbf{p}_1,\cdots,\mathbf{p}_m]$. Then
\begin{equation*}
 \mathscr{A} \mathscr{Q}_m=\mathscr{P}_m \mathscr{T}_m +\beta_m\mathbf{p}_{m+1}\mathbf{e}_m^*,
\end{equation*}
which is precisely (\ref{eq10_1}) with a real symmetric matrix $\mathscr{T}_m$. By the uniqueness of the tridiagonalization procedure, we obtain a real symmetric tridiagonal matrix $\mathscr{T}_m$ for   an arbitrary $\mathbf{p}_1$ and
\begin{equation*}
\mathbf{q}_1=\mathscr{V}\mathscr{U}^*\mathbf{p}_1=(\mathscr{V}\Sigma\mathscr{V}^*)
(\mathscr{V}\Sigma^{-1}\mathscr{U}^*)\mathbf{p}_1=\sqrt{\mathscr{A}^*\mathscr{A}}\mathscr{A}^{-1}\mathbf{p}_1.
\end{equation*}
The proof is completed.
\end{proof}

Since the computation of  $\sqrt{\mathscr{A}^*\mathscr{A}}\mathscr{A}^{-1}\mathbf{p}_1$ is more difficult than the solution of the original quaternion linear systems,  Theorem \ref{initial}  has little practical value. However, it shows that an approximation to $\sqrt{\mathscr{A}^*\mathscr{A}}\mathscr{A}^{-1}\mathbf{p}_1$ is a good  selection of $\mathbf{q}_1$. In addition, we see that QNHERLQ and QNHERQR contain at least implicitly the natural extension of the conjugate gradient method to the non-Hermitian case.

\section{Comparison to other approaches}
In this section, we first show that our proposed QNHERLQ and QNHERQR are not Krylov subspace methods, unlike QGMRES \cite{jn2021}, QFOM \cite{lw2023}, QBiCG \cite{lw2024} and QQMR \cite{lwz2024}, then we prove that the convergence of  QNHERLQ and QNHERQR depends on the singular values of $\mathscr{A}$. Finally, we compare the proposed methods with the corresponding real versions and the QQMR method \cite{lwz2024}.

In the following, we always assume that $\mathbf{p}_1=\mathbf{q}_1=\mathbf{r}_0/\beta$, where $\beta=\|\mathbf{r}_0\|_2$. Then the following proposition gives a representation of the subspace $\mathscr{K}_m=\mathrm{span}(\mathbf{q}_1$, $\cdots$, $\mathbf{q}_m)$.

\begin{proposition}\label{prop6.1}
Let $\mathscr{K}_m=\mathrm{span}(\mathbf{q}_1,\cdots,\mathbf{q}_m)$. Then
\begin{eqnarray*}
% \nonumber to remove numbering (before each equation)
  \mathscr{K}_m &=&  \left\{\begin{array}{ll}
  \mathrm{span}(\mathbf{r}_0,\mathscr{A}^*\mathbf{r}_0, \mathscr{A}^*\mathscr{A}\mathbf{r}_0,\mathscr{A}^*\mathscr{A}\mathscr{A}^*\mathbf{r}_0,
  \cdots,
(\mathscr{A}^*\mathscr{A})^{(m-2)/2}\mathscr{A}^*\mathbf{r}_0),&\text{if}~k~\text{is even}\\
 \mathrm{span}(\mathbf{r}_0,\mathscr{A}^*\mathbf{r}_0, \mathscr{A}^*\mathscr{A}\mathbf{r}_0,\mathscr{A}^*\mathscr{A}\mathscr{A}^*\mathbf{r}_0,
  \cdots,
(\mathscr{A}^*\mathscr{A})^{(m-1)/2}\mathbf{r}_0),&\text{if}~k~\text{is odd}\\
  \end{array}\right.
  \\
&=&L_{\lfloor(m-1)/2\rfloor}( \mathscr{A}^*\mathscr{A},\mathbf{r}_0)+
L_{\lfloor(m-2)/2\rfloor}( \mathscr{A}^*\mathscr{A},\mathscr{A}^*\mathbf{r}_0),
\end{eqnarray*}
where $\lfloor a\rfloor$ represents the largest integer less or equal to $a$.
\end{proposition}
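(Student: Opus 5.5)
Since $\mathbf{p}_1=\mathbf{q}_1=\mathbf{r}_0/\beta$, the plan is to prove by induction on $m$ that $\mathscr{K}_m=\mathrm{span}(\mathbf{q}_1,\dots,\mathbf{q}_m)$ equals the span of the first $m$ vectors of the alternating sequence
\[
\mathbf{r}_0,\ \mathscr{A}^*\mathbf{r}_0,\ \mathscr{A}^*\mathscr{A}\mathbf{r}_0,\ \mathscr{A}^*\mathscr{A}\mathscr{A}^*\mathbf{r}_0,\ \dots
\]
The $j$th vector $\mathbf{v}_j$ is $(\mathscr{A}^*\mathscr{A})^{(j-1)/2}\mathbf{r}_0$ for odd $j$ and $(\mathscr{A}^*\mathscr{A})^{(j-2)/2}\mathscr{A}^*\mathbf{r}_0$ for even $j$. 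In parallel I prove the companion statement for $\mathscr{P}_m=\mathrm{span}(\mathbf{p}_1,\dots,\mathbf{p}_m)$: it equals the span of the first $m$ vectors of
\[
\mathbf{r}_0,\ \mathscr{A}\mathbf{r}_0,\ \mathscr{A}\mathscr{A}^*\mathbf{r}_0,\ \mathscr{A}\mathscr{A}^*\mathscr{A}\mathbf{r}_0,\ \dots
\]
Here $\mathbf{p}$ starts from $\mathscr{A}\mathbf{q}_1$ and $\mathbf{q}$ starts from $\mathscr{A}^*\mathbf{p}_1$. All spans are right spans over $\mathbb{Q}$, consistent with the notation $L_m$ in (\ref{lc}). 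This is the specialization of Remark \ref{rem3.3} to $\mathbf{b}=\mathbf{c}=\mathbf{r}_0$: there $\mathbf{b}$ and $\mathscr{A}^*\mathbf{b}$ (respectively $\mathbf{b}$ and $\mathscr{A}\mathbf{b}$) merge into a single chain.

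The base case $m=1$ is trivial. For the step, use the recurrences (\ref{eq12_1}):
\[
\mathbf{q}_{m+1}\gamma_m=\mathscr{A}^*\mathbf{p}_m-\mathbf{q}_m\bm{\alpha}_m^*-\mathbf{q}_{m-1}\beta_{m-1},\qquad
\mathbf{p}_{m+1}\beta_m=\mathscr{A}\mathbf{q}_m-\mathbf{p}_m\bm{\alpha}_m-\mathbf{p}_{m-1}\gamma_{m-1}.
\]
By the induction hypothesis, $\mathbf{p}_m$ is a right combination of the first $m$ vectors of the $\mathscr{A}$-chain. Applying $\mathscr{A}^*$ on the left commutes with right scalar multiplication, since $\mathscr{A}^*(\mathbf{x}\mathbf{a})=(\mathscr{A}^*\mathbf{x})\mathbf{a}$. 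It sends the $j$th vector of the $\mathscr{A}$-chain to the $(j+1)$st vector of the $\mathscr{A}^*$-chain: $\mathscr{A}^*(\mathscr{A}\mathscr{A}^*)^k\mathbf{r}_0=(\mathscr{A}^*\mathscr{A})^k\mathscr{A}^*\mathbf{r}_0$ and $\mathscr{A}^*(\mathscr{A}\mathscr{A}^*)^k\mathscr{A}\mathbf{r}_0=(\mathscr{A}^*\mathscr{A})^{k+1}\mathbf{r}_0$. Hence $\mathbf{q}_{m+1}\in\mathrm{span}(\mathbf{v}_1,\dots,\mathbf{v}_{m+1})$, and symmetrically for $\mathbf{p}_{m+1}$. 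This gives the inclusion $\subseteq$.

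For equality, note that $\mathbf{q}_1,\dots,\mathbf{q}_m$ are orthonormal by Proposition \ref{prop1}, provided no breakdown occurs ($\beta_i,\gamma_i\ne0$). So $\mathscr{K}_m$ has right dimension $m$, while the span of $\mathbf{v}_1,\dots,\mathbf{v}_m$ has dimension at most $m$; inclusion then forces equality. For this I need the quaternion analogue of "independent set inside a span of $m$ vectors forces equality". I will justify it via the real representation $\mathcal{R}$: right $\mathbb{Q}$-spans correspond to column spaces of $\mathcal{R}(\cdot)$ with real dimension $4\times$ the quaternionic dimension. Alternatively, use right-linear independence in the right quaternionic vector space $\mathbb{Q}^n$ directly; since $\mathbb{Q}$ is a division ring, the Steinitz exchange lemma holds.

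Finally, the compact form is bookkeeping. Among the first $m$ chain vectors, the odd-indexed ones are $(\mathscr{A}^*\mathscr{A})^k\mathbf{r}_0$ for $k=0,\dots,\lfloor (m-1)/2\rfloor$, and the even-indexed ones are $(\mathscr{A}^*\mathscr{A})^k\mathscr{A}^*\mathbf{r}_0$ for $k=0,\dots,\lfloor (m-2)/2\rfloor$. This yields the stated sum of two $L$-type combinations and the two parity cases; the statement's "$k$ even/odd" should read "$m$ even/odd". The main obstacle is the dimension-counting step over the noncommutative skew field, which I settle through $\mathcal{R}$, and stating the no-breakdown assumption explicitly. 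Without it, the claimed equality only holds as an inclusion.
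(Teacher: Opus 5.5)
Your argument is correct and follows the route the paper intends: its proof only says the result follows directly from the recurrences in Algorithms~\ref{alg5.2} and~\ref{alg5.1}, and you make this rigorous with the coupled induction on the $\mathbf{p}$- and $\mathbf{q}$-chains (using $\mathbf{p}_1=\mathbf{q}_1$) for the inclusion, plus orthonormality from Proposition~\ref{prop1} and a dimension count over $\mathbb{Q}$ for the reverse inclusion, which the paper never addresses. Two small remarks: your no-breakdown caveat holds automatically, since $\mathbf{q}_m$ is only produced when $\beta_i,\gamma_i\neq0$ for all $i<m$; and your index ranges give $\lfloor(m-1)/2\rfloor+1$ and $\lfloor(m-2)/2\rfloor+1$ vectors, so the compact $L$-form matches only if the subscript of $L$ is read as the top power (as the paper does in the subsequent residual bound and in the proof of Proposition~\ref{prop6.2}) rather than as the number of terms in~(\ref{lc}).
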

\begin{proof}
The results can be followed directly from Algorithms \ref{alg5.2} and \ref{alg5.1}.
\end{proof}

Obviously $\mathscr{K}_m$ is not a Krylov subspace defined by $\mathcal{K}_m(\mathscr{A},\mathbf{v})=\mathrm{span}(\mathbf{v}$, $\mathscr{A}\mathbf{v}$, $\cdots$,
  $\mathscr{A}^{m-1}\mathbf{v})$, but a union of two Krylov subspaces $\mathcal{K}_{\lfloor(m-1)/2\rfloor}( \mathscr{A}^*\mathscr{A}$, $\mathbf{r}_0)$ and
$\mathcal{K}_{\lfloor(m-2)/2\rfloor}$ $( \mathscr{A}^*\mathscr{A}$, $\mathscr{A}^*\mathbf{r}_0)$.  The proposed QNHERLQ and QNHERQR may find an approximation to the quaternion linear systems (\ref{1.1}) from the subspace $\mathscr{K}_m$, and thus are not  the Krylov subspace method.

From Proposition \ref{prop6.1}, it is known that the singular values of $\mathscr{A}$ play an important role in the convergence analysis of our proposed algorithms, while the spectrum of $\mathscr{A}$ take over this part for the QGMRES \cite{jn2021}.

In fact, let the QSVD decomposition of   $\mathscr{A}\in\mathbb{Q}^{n\times n}$ be given by
\begin{equation*}
 \mathscr{A}=\mathscr{U}\Sigma\mathscr{V}^*,
\end{equation*}
where $\mathscr{U}\in\mathbb{Q}^{n\times n}$ and $\mathscr{V}\in\mathbb{Q}^{n\times n}$ are unitary quaternion matrices and $\Sigma=\mathrm{diag}(\sigma_1$, $\cdots$, $\sigma_n)\in\mathbb{R}^{n\times n}$ is a diagonal matrix with $\sigma_i\geq0$ for $i=1,2,\cdots,n $. Since $\mathbf{x}_m\in\mathbf{x}_0+\mathscr{K}_m$ in QNHERLQ and QNHERQR, it follows from Proposition \ref{prop6.1} that
\begin{eqnarray*}
  \|\mathbf{r}_m\|_2&=&\|\mathbf{r}_0-L_{\lfloor(m-1)/2\rfloor}( \mathscr{A} \mathscr{A}^*,\mathscr{A} \mathbf{r}_0)-
L_{\lfloor(m-2)/2\rfloor}( \mathscr{A}\mathscr{A}^*, \mathscr{A}\mathscr{A}^*\mathbf{r}_0)\|_2\\
&=&\|\mathbf{r}_0-\mathscr{A} \mathbf{r}_0\mathbf{a}_0-  (\mathscr{A}\mathscr{A}^*)\mathscr{A} \mathbf{r}_0\mathbf{a}_1
-(\mathscr{A}\mathscr{A}^*)^{\lfloor(m-1)/2\rfloor}\mathscr{A} \mathbf{r}_0 \mathbf{a}_{\lfloor(m-1)/2\rfloor}\\
&&-
 \mathscr{A}\mathscr{A}^*\mathbf{r}_0\mathbf{b}_0-
 (\mathscr{A}\mathscr{A}^*)\mathscr{A}\mathscr{A}^*\mathbf{r}_0\mathbf{b}_1
 -(\mathscr{A}\mathscr{A}^*)^{\lfloor m/2-1\rfloor}\mathscr{A}\mathscr{A}^*\mathbf{r}_0\mathbf{b}_{\lfloor m/2-1\rfloor}\|_2\\
 &=&\|\mathscr{U}[\mathscr{U}^*\mathbf{r}_0-\Sigma L_{\lfloor(m-1)/2\rfloor}( \Sigma^2, \mathscr{V}^*\mathbf{r}_0)-
\Sigma^2L_{\lfloor(m-2)/2\rfloor}(\Sigma^2,\mathscr{U}^*\mathbf{r}_0)]\|_2\\
&=&\| \mathscr{U}^*\mathbf{r}_0-\Sigma L_{\lfloor(m-1)/2\rfloor}( \Sigma^2, \mathscr{V}^*\mathbf{r}_0)-
\Sigma^2L_{\lfloor(m-2)/2\rfloor}(\Sigma^2,\mathscr{U}^*\mathbf{r}_0) \|_2.
\end{eqnarray*}
If $m$ is even, then
\begin{eqnarray}
  \|\mathbf{r}_m\|_2&\leq &
   \|\mathbf{r}_0\|_2[\|\Sigma|\mathbf{a}_0|+\Sigma^3|\mathbf{a}_1|+\cdots+
\Sigma^{m-1}|\mathbf{a}_{m/2-1}|\|_2\nonumber\\
&&+
 \|\mathbf{I}+\Sigma^2|\mathbf{b}_0|+\Sigma^4|\mathbf{b}_1|+\cdots+
\Sigma^{m}|\mathbf{b}_{m/2-1}|\|_2]\nonumber\\
&\leq &\max\limits_{1\leq i\leq n}(1+\sigma_i|\mathbf{a}_0|+\sigma_i^2|\mathbf{b}_0|+\sigma_i^3|\mathbf{a}_1|
+\sigma_i^4|\mathbf{b}_1|+\cdots\nonumber\\
&&+
\sigma_i^{m-1}|\mathbf{a}_{m/2-1}|+\sigma_i^{m}|\mathbf{b}_{m/2-1}|)  \|\mathbf{r}_0\|_2.\label{6.1}
\end{eqnarray}
If $m$ is odd, then
\begin{eqnarray}
  \|\mathbf{r}_m\|_2&\leq&
   \|\mathbf{r}_0\|_2[\|\Sigma|\mathbf{a}_0|+\Sigma^3|\mathbf{a}_1| +\cdots+
\Sigma^{m}|\mathbf{a}_{(m-1)/2}|\|_2\nonumber\\
&&+\|\mathbf{I}+\Sigma^2|\mathbf{b}_0|+\Sigma^4|\mathbf{b}_1|+\cdots+
\Sigma^{m-1}|\mathbf{b}_{(m-3)/2}|\|_2]\nonumber\\
&\leq &\max\limits_{1\leq i\leq n}(1+\sigma_i|\mathbf{a}_0|+\sigma_i^2|\mathbf{b}_0|+\sigma_i^3|\mathbf{a}_1|
+\sigma_i^4|\mathbf{b}_1|+\cdots\nonumber\\
&&+
\sigma_i^{m-1}|\mathbf{b}_{(m-3)/2}|+\sigma_i^{m} |\mathbf{a}_{(m-1)/2}|)  \|\mathbf{r}_0\|_2.\label{6.2}
\end{eqnarray}

Let $\mathscr{A}=A_0+A_1\mathbf{i}+A_2\mathbf{j}+A_3\mathbf{k}$, $\mathbf{x}=x_0+x_1\mathbf{i}+x_2\mathbf{j}+x_3\mathbf{k}$ and $\mathbf{b}=b_0+b_1\mathbf{i}+b_2\mathbf{j}+b_3\mathbf{k}$. It is obvious that the quaternion linear systems (\ref{1.1}) can be equivalently rewritten as the following real linear systems
\begin{equation}\label{6.3}
\Upsilon_\mathscr{A}\Upsilon_\mathbf{x}^c=\Upsilon_\mathbf{b}^c,
\end{equation}
where
\begin{equation*}
\Upsilon_\mathscr{A}=\left[
  \begin{array}{cccc}
A_0&-A_1&-A_2&-A_3\\
  A_1&A_0&-A_3&A_2\\
  A_2&A_3&A_0&-A_1\\
 A_3&-A_2&A_1&A_0
  \end{array}
  \right],
  \Upsilon_\mathbf{x}^c=\left[\begin{array}{c}
 x_0\\
 x_1\\
 x_2\\
 x_3
 \end{array}\right],
 \Upsilon_\mathbf{b}^c=\left[\begin{array}{c}
b_0\\
 b_1\\
 b_2\\
 b_3
 \end{array}\right].
\end{equation*}
Denote $\mathbf{x}_m=x_{m,0}+x_{m,1}\mathbf{i}+x_{m,2}\mathbf{j}+x_{m,3}\mathbf{k}$, $\mathbf{x}_0=x_{0,0}+x_{0,1}\mathbf{i}+x_{0,2}\mathbf{j}+x_{0,3}\mathbf{k}$  and $\mathbf{r}_0=r_{0}+r_{1}\mathbf{i}+r_{2}\mathbf{j}+r_{3}\mathbf{k}$. Then the CG-type %{\rm (USYMLQ} or {\rm USYMQR)}
 iteration  for the real linear systems (\ref{6.3}) is given by
\begin{equation}\label{re2}
\left[\begin{array}{c}
 x_{m,0}\\
 x_{m,1}\\
 x_{m,2}\\
 x_{m,3}
 \end{array}\right]=    \left[\begin{array}{c}
 x_{0,0}\\
 x_{0,1}\\
 x_{0,2}\\
 x_{0,3}
 \end{array}\right] +P(\Upsilon_\mathscr{A}^T\Upsilon_\mathscr{A})\left[\begin{array}{c}
 \!\!r_0\!\! \\
 \!\!r_1\!\! \\
 \!\!r_2\!\! \\
\!\!r_3\!\!
 \end{array}\right]+
Q( \Upsilon_\mathscr{A}^T\Upsilon_\mathscr{A}) \Upsilon_\mathscr{A}^T\left[\begin{array}{c}
 \!\!r_0\!\! \\
 \!\!r_1\!\! \\
 \!\!r_2\!\! \\
\!\!r_3\!\!
 \end{array}\right],
\end{equation}
where $P,Q$ are real coefficient polynomials of degree $\lfloor(m-1)/2\rfloor$ and $\lfloor(m-2)/2\rfloor$, respectively.

\begin{proposition}\label{prop6.2}
The {\rm QCG}-type {\rm (QNHERLQ} or {\rm QNHERQR)} iteration
\begin{equation}\label{re4}
 \mathbf{x}_m=\mathbf{x}_0+ L_{\lfloor(m-1)/2\rfloor}( \mathscr{A}^*\mathscr{A},\mathbf{r}_0)+
L_{\lfloor(m-2)/2\rfloor}( \mathscr{A}^*\mathscr{A},\mathscr{A}^*\mathbf{r}_0)
\end{equation}
is equivalent to
\begin{eqnarray}
% \nonumber to remove numbering (before each equation)
  \hspace{-5mm} \left[\begin{array}{c}
  \!\!x_{m,0}\!\! \\
 \!\!x_{m,1}\!\! \\
 \!\! x_{m,2}\!\! \\
  \!\!x_{m,3}\!\!
 \end{array}\right]&=&    \left[\begin{array}{c}
  \!\!x_{0,0}\!\! \\
  \!\!x_{0,1}\!\! \\
 \!\!x_{0,2}\!\! \\
 \!\!x_{0,3}\!\!
 \end{array}\right] +P_1(\Upsilon_\mathscr{A}^T\Upsilon_\mathscr{A})\!\!\left[\begin{array}{c}
 \!\!r_0\!\! \\
 \!\!r_1\!\! \\
 \!\!r_2\!\! \\
\!\!r_3\!\!
 \end{array}\right]
 +P_2(\Upsilon_\mathscr{A}^T\Upsilon_\mathscr{A})\!\!\left[\begin{array}{c}
 \!\!-r_1\!\!  \\
  \!\!r_0\!\!  \\
 \!\!r_3\!\!  \\
 \!\!-r_2\!\!
 \end{array}\right]
  +P_3(\Upsilon_\mathscr{A}^T\Upsilon_\mathscr{A})\!\!\left[\begin{array}{c}
 \!\!-r_2\!\!  \\
 \!\!-r_3\!\!  \\
 \!\!r_0\!\!  \\
 \!\!r_1\!\!
 \end{array}\right]\nonumber\\
 && +P_4(\Upsilon_\mathscr{A}^T\Upsilon_\mathscr{A})\!\!\left[\begin{array}{c}
 \!\!-r_3\!\!  \\
 \!\!r_2\!\!  \\
 \!\!-r_1\!\!  \\
 \!\!r_0\!\!
 \end{array}\right]
   +Q_1(\Upsilon_\mathscr{A}^T\Upsilon_\mathscr{A})\Upsilon_\mathscr{A}^T\!\!\left[\begin{array}{c}
 \!\!r_0\!\! \\
 \!\!r_1\!\! \\
 \!\!r_2\!\! \\
\!\!r_3\!\!
 \end{array}\right]
 \!+\!Q_2(\Upsilon_\mathscr{A}^T\Upsilon_\mathscr{A})\Upsilon_\mathscr{A}^T\!\!\left[\begin{array}{c}
  \!\!-r_1\!\!  \\
  \!\!r_0\!\!  \\
  \!\!r_3\!\!  \\
 \!\!-r_2\!\!
 \end{array}\right]\nonumber\\
  && \!+\!Q_3(\Upsilon_\mathscr{A}^T\Upsilon_\mathscr{A})\Upsilon_\mathscr{A}^T\!\!\left[\begin{array}{c}
 \!\! -r_2\!\!  \\
 \!\! -r_3\!\!  \\
 \!\! r_0\!\!  \\
\!\! r_1\!\!
 \end{array}\right]
 \!+\!Q_4(\Upsilon_\mathscr{A}^T\Upsilon_\mathscr{A})\Upsilon_\mathscr{A}^T\!\!\left[\begin{array}{c}
 \!\! -r_3\!\!  \\
 \!\! r_2\!\!  \\
 \!\! -r_1\!\!  \\
\!\! r_0\!\!
 \end{array}\right],\label{re1}
\end{eqnarray}
where $P_1,P_2,P_3,P_4$ and $Q_1,Q_2,Q_3,Q_4$ are real coefficient polynomials of degree $\lfloor(m-1)/2\rfloor$ and $\lfloor(m-2)/2\rfloor$, respectively.
\end{proposition}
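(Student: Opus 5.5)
The idea is to translate the quaternion iteration (\ref{re4}) into real arithmetic term by term. The key point is that right multiplication of a quaternion vector by a quaternion scalar becomes, in the column representation $\Upsilon^c$, a real combination of four fixed "twisted" copies of the vector. Write $\mathscr{B}=\mathscr{A}^*\mathscr{A}$. Two facts are needed first.

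\textbf{Step 1: left multiplication.} For $\mathscr{M}=M_0+M_1\mathbf{i}+M_2\mathbf{j}+M_3\mathbf{k}$ and $\mathbf{v}=v_0+v_1\mathbf{i}+v_2\mathbf{j}+v_3\mathbf{k}$, a direct expansion of the product using $\mathbf{i}\mathbf{j}=\mathbf{k}$, etc., gives $\Upsilon^c_{\mathscr{M}\mathbf{v}}=\Upsilon_\mathscr{M}\Upsilon^c_\mathbf{v}$. The map $\mathscr{M}\mapsto\Upsilon_\mathscr{M}$ is therefore multiplicative and additive. One also checks from the block pattern that $\Upsilon_{\mathscr{M}^*}=\Upsilon_\mathscr{M}^T$. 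Consequently $\Upsilon^c_{\mathscr{B}^k\mathbf{r}_0}=(\Upsilon_\mathscr{A}^T\Upsilon_\mathscr{A})^k\Upsilon^c_{\mathbf{r}_0}$, and similarly with an extra factor $\Upsilon_\mathscr{A}^T$ for $\mathscr{B}^k\mathscr{A}^*\mathbf{r}_0$.

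\textbf{Step 2: right scalar multiplication.} For a quaternion scalar $\mathbf{a}=a_0+a_1\mathbf{i}+a_2\mathbf{j}+a_3\mathbf{k}$, expand $\mathbf{v}\mathbf{a}=\mathbf{v}a_0+(\mathbf{v}\mathbf{i})a_1+(\mathbf{v}\mathbf{j})a_2+(\mathbf{v}\mathbf{k})a_3$. Computing $\mathbf{v}\mathbf{i}=-v_1+v_0\mathbf{i}+v_3\mathbf{j}-v_2\mathbf{k}$ gives column $[-v_1,v_0,v_3,-v_2]^T$. In the same way $\mathbf{v}\mathbf{j}$ gives $[-v_2,-v_3,v_0,v_1]^T$ and $\mathbf{v}\mathbf{k}$ gives $[-v_3,v_2,-v_1,v_0]^T$. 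These are exactly the three twisted vectors in (\ref{re1}).

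The crucial observation is that these twists commute with left multiplication by $\Upsilon_\mathscr{M}$. This holds because $(\mathscr{M}\mathbf{v})\mathbf{i}=\mathscr{M}(\mathbf{v}\mathbf{i})$ by associativity. Hence $\Upsilon^c_{(\mathscr{B}^k\mathbf{r}_0)\mathbf{i}}=(\Upsilon_\mathscr{A}^T\Upsilon_\mathscr{A})^k\Upsilon^c_{\mathbf{r}_0\mathbf{i}}$, where $\Upsilon^c_{\mathbf{r}_0\mathbf{i}}=[-r_1,r_0,r_3,-r_2]^T$, and likewise for $\mathbf{j}$ and $\mathbf{k}$.

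\textbf{Step 3: assembly.} Write the terms of (\ref{re4}) as $\sum_k \mathscr{B}^k\mathbf{r}_0\mathbf{a}_k+\sum_k\mathscr{B}^k\mathscr{A}^*\mathbf{r}_0\mathbf{b}_k$ with $\mathbf{a}_k=a_{k,0}+a_{k,1}\mathbf{i}+a_{k,2}\mathbf{j}+a_{k,3}\mathbf{k}$. Define
\[
P_{\ell+1}(t)=\sum_k a_{k,\ell}t^k,\qquad Q_{\ell+1}(t)=\sum_k b_{k,\ell}t^k,\qquad \ell=0,1,2,3.
\]
Applying $\Upsilon^c$ together with Steps 1 and 2 then yields (\ref{re1}) exactly, with the stated degrees $\lfloor(m-1)/2\rfloor$ and $\lfloor(m-2)/2\rfloor$. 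For the converse, the same identities read backwards: given real polynomials $P_\ell,Q_\ell$, form $\mathbf{a}_k$ and $\mathbf{b}_k$ from their coefficients, and the real iterate (\ref{re1}) is the real form of an iterate of type (\ref{re4}). This establishes the equivalence.

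\textbf{Main obstacle.} The hardest step is the sign bookkeeping in Step 2, specifically the compatibility between twists and $\Upsilon_\mathscr{A}^T$. The argument for this is to verify that transposition of $\Upsilon$ corresponds to quaternion conjugate transpose. After that, the commutation follows from associativity rather than from matrix identities. Everything else is linear-combination bookkeeping.
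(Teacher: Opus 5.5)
Your proposal is correct and follows essentially the same route as the paper. Both arguments expand the iterate with quaternion coefficients $\mathbf{a}_k,\mathbf{b}_k$, move each scalar onto $\mathbf{r}_0$, split $\Upsilon^c_{\mathbf{r}_0\mathbf{a}_k}$ into the four twisted copies of $\Upsilon^c_{\mathbf{r}_0}$, and regroup the real coefficients into $P_\ell,Q_\ell$. The paper moves the scalars implicitly, while you justify that step via associativity, $\Upsilon^c_{\mathscr{M}\mathbf{v}}=\Upsilon_\mathscr{M}\Upsilon^c_\mathbf{v}$ and $\Upsilon_{\mathscr{A}^*}=\Upsilon_\mathscr{A}^T$. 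Your twisted vectors agree with the paper's final formula, whereas its displayed expansion of $\Upsilon^c_{\mathbf{r}_0\mathbf{a}_k}$ has a sign slip in the $r_2a_{k,1}$ term of the last entry.
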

\begin{proof}
By (\ref{re4}), we have
\begin{eqnarray*}
% \nonumber to remove numbering (before each equation)
   \mathbf{x}_m&=&\mathbf{x}_0+L_{\lfloor(m-1)/2\rfloor}( \mathscr{A}^*\mathscr{A},\mathbf{r}_0)+
L_{\lfloor(m-2)/2\rfloor}( \mathscr{A}^*\mathscr{A},\mathscr{A}^*\mathbf{r}_0)\nonumber \\
&=&\mathbf{x}_0+\mathbf{r}_0\mathbf{a}_0+ (\mathscr{A}^*\mathscr{A}) \mathbf{r}_0\mathbf{a}_1+\cdots+
 (\mathscr{A}^*\mathscr{A})^{\lfloor(m-1)/2\rfloor} \mathbf{r}_0\mathbf{a}_{\lfloor(m-1)/2\rfloor}\nonumber \\
 &&+\mathscr{A}^*\mathbf{r}_0\mathbf{b}_0+ (\mathscr{A}^*\mathscr{A}) \mathscr{A}^*\mathbf{r}_0\mathbf{b}_1+\cdots+
 (\mathscr{A}^*\mathscr{A})^{\lfloor(m-2)/2\rfloor}\mathscr{A}^* \mathbf{r}_0\mathbf{b}_{\lfloor(m-2)/2\rfloor}.\label{re3}
\end{eqnarray*}
According to the properties of  real representations, we have
 \begin{eqnarray}
% \nonumber to remove numbering (before each equation)
   \Upsilon_{\mathbf{x}_m}^c
&=& \Upsilon_{\mathbf{x}_0}^c+\Upsilon_{\mathbf{r}_0\mathbf{a}_0}^c+ (\Upsilon_\mathscr{A}^T\Upsilon_\mathscr{A}) \Upsilon_{\mathbf{r}_0\mathbf{a}_1}^c+\cdots+
 (\Upsilon_\mathscr{A}^T\Upsilon_\mathscr{A})^{\lfloor(m-1)/2\rfloor}\Upsilon_{ \mathbf{r}_0\mathbf{a}_{\lfloor(m-1)/2\rfloor}}^c\nonumber \\
 &&+\Upsilon_\mathscr{A}^T\Upsilon_{\mathbf{r}_0\mathbf{b}_0}^c+ (\Upsilon_\mathscr{A}^T\Upsilon_\mathscr{A}) \Upsilon_\mathscr{A}^T\Upsilon_{\mathbf{r}_0\mathbf{b}_1}^c+\cdots+
 (\Upsilon_\mathscr{A}^T\Upsilon_\mathscr{A})^{\lfloor(m-2)/2\rfloor}\Upsilon_\mathscr{A}^T\Upsilon_{ \mathbf{r}_0\mathbf{b}_{\lfloor(m-2)/2\rfloor}}^c\nonumber \\
 &=& \Upsilon_{\mathbf{x}_0}^c+
 \left[\begin{smallmatrix}
 \!\!r_0a_{0,0}-r_1a_{0,1}-r_2a_{0,2}-r_3a_{0,3}\!\! \\
 \!\!r_1a_{0,0}+r_0a_{0,1}-r_3a_{0,2}+r_2a_{0,3}\!\! \\
 \!\!r_2a_{0,0}+r_3a_{0,1}+r_0a_{0,2}-r_1a_{0,3}\!\! \\
\!\!r_3a_{0,0}+r_2a_{0,1}+r_1a_{0,2}+r_0a_{0,3}\!\!
 \end{smallmatrix}\right]
 + (\Upsilon_\mathscr{A}^T\Upsilon_\mathscr{A})\left[\begin{smallmatrix}
 \!\!r_0a_{1,0}-r_1a_{1,1}-r_2a_{1,2}-r_3a_{1,3}\!\! \\
 \!\!r_1a_{1,0}+r_0a_{1,1}-r_3a_{1,2}+r_2a_{1,3}\!\! \\
 \!\!r_2a_{1,0}+r_3a_{1,1}+r_0a_{1,2}-r_1a_{1,3}\!\! \\
\!\!r_3a_{1,0}+r_2a_{1,1}+r_1a_{1,2}+r_0a_{1,3}\!\!
 \end{smallmatrix}\right]+\cdots\nonumber\\
 &&+
 (\Upsilon_\mathscr{A}^T\Upsilon_\mathscr{A})^{\lfloor(m-1)/2\rfloor}
 \left[\begin{smallmatrix}
 \!\!r_0a_{\lfloor(m-1)/2\rfloor,0}-r_1a_{\lfloor(m-1)/2\rfloor,1}-r_2a_{\lfloor(m-1)/2\rfloor,2}-r_3a_{\lfloor(m-1)/2\rfloor,3}\!\! \\
 \!\!r_1a_{\lfloor(m-1)/2\rfloor,0}+r_0a_{\lfloor(m-1)/2\rfloor,1}-r_3a_{\lfloor(m-1)/2\rfloor,2}+r_2a_{\lfloor(m-1)/2\rfloor,3}\!\! \\
 \!\!r_2a_{\lfloor(m-1)/2\rfloor,0}+r_3a_{\lfloor(m-1)/2\rfloor,1}+r_0a_{\lfloor(m-1)/2\rfloor,2}-r_1a_{\lfloor(m-1)/2\rfloor,3}\!\! \\
\!\!r_3a_{\lfloor(m-1)/2\rfloor,0}+r_2a_{\lfloor(m-1)/2\rfloor,1}+r_1a_{\lfloor(m-1)/2\rfloor,2}+r_0a_{\lfloor(m-1)/2\rfloor,3}\!\!
 \end{smallmatrix}\right]\nonumber \\
 &&+\Upsilon_\mathscr{A}^T\left[\begin{smallmatrix}
 \!\!r_0b_{0,0}-r_1b_{0,1}-r_2b_{0,2}-r_3b_{0,3}\!\! \\
 \!\!r_1b_{0,0}+r_0b_{0,1}-r_3b_{0,2}+r_2b_{0,3}\!\! \\
 \!\!r_2b_{0,0}+r_3b_{0,1}+r_0b_{0,2}-r_1b_{0,3}\!\! \\
\!\!r_3b_{0,0}+r_2b_{0,1}+r_1b_{0,2}+r_0b_{0,3}\!\!
 \end{smallmatrix}\right]+ (\Upsilon_\mathscr{A}^T\Upsilon_\mathscr{A}) \Upsilon_\mathscr{A}^T\left[\begin{smallmatrix}
 \!\!r_0b_{1,0}-r_1b_{1,1}-r_2b_{1,2}-r_3b_{1,3}\!\! \\
 \!\!r_1b_{1,0}+r_0b_{1,1}-r_3b_{1,2}+r_2b_{1,3}\!\! \\
 \!\!r_2b_{1,0}+r_3b_{1,1}+r_0b_{1,2}-r_1b_{1,3}\!\! \\
\!\!r_3b_{1,0}+r_2b_{1,1}+r_1b_{1,2}+r_0b_{1,3}\!\!
 \end{smallmatrix}\right]+\cdots\nonumber\\
 &&+
 (\Upsilon_\mathscr{A}^T\Upsilon_\mathscr{A})^{\lfloor(m-2)/2\rfloor}\Upsilon_\mathscr{A}^T\left[\begin{smallmatrix}
 \!\!r_0b_{\lfloor(m-2)/2\rfloor,0}-r_1b_{\lfloor(m-2)/2\rfloor,1}-r_2b_{\lfloor(m-2)/2\rfloor,2}-r_3b_{\lfloor(m-2)/2\rfloor,3}\!\! \\
 \!\!r_1b_{\lfloor(m-2)/2\rfloor,0}+r_0b_{\lfloor(m-2)/2\rfloor,1}-r_3b_{\lfloor(m-2)/2\rfloor,2}+r_2b_{\lfloor(m-2)/2\rfloor,3}\!\! \\
 \!\!r_2b_{\lfloor(m-2)/2\rfloor,0}+r_3b_{\lfloor(m-2)/2\rfloor,1}+r_0b_{\lfloor(m-2)/2\rfloor,2}-r_1b_{\lfloor(m-2)/2\rfloor,3}\!\! \\
\!\!r_3b_{\lfloor(m-2)/2\rfloor,0}+r_2b_{\lfloor(m-2)/2\rfloor,1}+r_1b_{\lfloor(m-2)/2\rfloor,2}+r_0b_{\lfloor(m-2)/2\rfloor,3}\!\!
 \end{smallmatrix}\right]\nonumber 
\end{eqnarray}
\begin{eqnarray*}
  &=& \Upsilon_{\mathbf{x}_0}^c+\{a_{0,0}+(\Upsilon_\mathscr{A}^T\Upsilon_\mathscr{A})a_{1,0}+\cdots+
  (\Upsilon_\mathscr{A}^T\Upsilon_\mathscr{A})^{\lfloor(m-1)/2\rfloor}a_{\lfloor(m-1)/2\rfloor,0}\}\left[\begin{smallmatrix}
 \!\!r_0\!\! \\
 \!\!r_1\!\! \\
 \!\!r_2\!\! \\
\!\!r_3\!\!
\end{smallmatrix}\right]\nonumber\\
 &&+\{a_{0,1}+(\Upsilon_\mathscr{A}^T\Upsilon_\mathscr{A})a_{1,1}+\cdots+
  (\Upsilon_\mathscr{A}^T\Upsilon_\mathscr{A})^{\lfloor(m-1)/2\rfloor}a_{\lfloor(m-1)/2\rfloor,1}\}\left[\begin{smallmatrix}
 \!\!-r_1\!\! \\
 \!\!r_0\!\! \\
 \!\!r_3\!\! \\
\!\!-r_2\!\!
\end{smallmatrix}\right]\nonumber\\
&&+\{a_{0,2}+(\Upsilon_\mathscr{A}^T\Upsilon_\mathscr{A})a_{1,2}+\cdots+
  (\Upsilon_\mathscr{A}^T\Upsilon_\mathscr{A})^{\lfloor(m-1)/2\rfloor}a_{\lfloor(m-1)/2\rfloor,2}\}\left[\begin{smallmatrix}
 \!\!-r_2\!\! \\
 \!\!-r_3\!\! \\
 \!\!r_0\!\! \\
\!\!r_1\!\!
\end{smallmatrix}\right]\nonumber\\
&&+\{a_{0,3}+(\Upsilon_\mathscr{A}^T\Upsilon_\mathscr{A})a_{1,3}+\cdots+
  (\Upsilon_\mathscr{A}^T\Upsilon_\mathscr{A})^{\lfloor(m-1)/2\rfloor}a_{\lfloor(m-1)/2\rfloor,3}\}\left[\begin{smallmatrix}
 \!\!-r_3\!\! \\
 \!\!r_2\!\! \\
 \!\!-r_1\!\! \\
\!\!r_0\!\!
\end{smallmatrix}\right]\nonumber\\
\end{eqnarray*}
\begin{eqnarray*}
&&+\{\Upsilon_\mathscr{A}^Tb_{0,0}+(\Upsilon_\mathscr{A}^T\Upsilon_\mathscr{A})\Upsilon_\mathscr{A}^Tb_{1,0}+\cdots+
  (\Upsilon_\mathscr{A}^T\Upsilon_\mathscr{A})^{\lfloor(m-2)/2\rfloor}\Upsilon_\mathscr{A}^T
  b_{\lfloor(m-2)/2\rfloor,0}\}\left[\begin{smallmatrix}
 \!\!r_0\!\! \\
 \!\!r_1\!\! \\
 \!\!r_2\!\! \\
\!\!r_3\!\!
\end{smallmatrix}\right]\nonumber\\
 &&+\{\Upsilon_\mathscr{A}^Tb_{0,1}+(\Upsilon_\mathscr{A}^T\Upsilon_\mathscr{A})\Upsilon_\mathscr{A}^Tb_{1,1}+\cdots+
  (\Upsilon_\mathscr{A}^T\Upsilon_\mathscr{A})^{\lfloor(m-2)/2\rfloor}\Upsilon_\mathscr{A}^T
  b_{\lfloor(m-2)/2\rfloor,1}\}\left[\begin{smallmatrix}
 \!\!-r_1\!\! \\
 \!\!r_0\!\! \\
 \!\!r_3\!\! \\
\!\!-r_2\!\!
\end{smallmatrix}\right]\nonumber\\
&&+\{\Upsilon_\mathscr{A}^Tb_{0,2}+(\Upsilon_\mathscr{A}^T\Upsilon_\mathscr{A})\Upsilon_\mathscr{A}^Tb_{1,2}+\cdots+
  (\Upsilon_\mathscr{A}^T\Upsilon_\mathscr{A})^{\lfloor(m-2)/2\rfloor}\Upsilon_\mathscr{A}^T
  b_{\lfloor(m-2)/2\rfloor,2}\}\left[\begin{smallmatrix}
 \!\!-r_2\!\! \\
 \!\!-r_3\!\! \\
 \!\!r_0\!\! \\
\!\!r_1\!\!
\end{smallmatrix}\right]\nonumber\\
&&+\{\Upsilon_\mathscr{A}^Tb_{0,3}+(\Upsilon_\mathscr{A}^T\Upsilon_\mathscr{A})\Upsilon_\mathscr{A}^Tb_{1,3}+\cdots+
  (\Upsilon_\mathscr{A}^T\Upsilon_\mathscr{A})^{\lfloor(m-2)/2\rfloor}\Upsilon_\mathscr{A}^T
  b_{\lfloor(m-2)/2\rfloor,3}\}\left[\begin{smallmatrix}
 \!\!-r_3\!\! \\
 \!\!r_2\!\! \\
 \!\!-r_1\!\! \\
\!\!r_0\!\!
\end{smallmatrix}\right].\nonumber
\end{eqnarray*}
This proves the desired result.
\end{proof}

We see from Proposition \ref{prop6.2} that the corresponding real equivalence of QCG-type schemes (\ref{re4}) are iterations of the type (\ref{re1}) and not the obvious real
 CG-type iterations (\ref{re2}). By using the schemes of the type (\ref{re2}), from the first, we give up $3m$ of the $4m$ real parameters which are available for optimizing the QCG-type method (\ref{re4}). Consequently, it is preferable to solve the quaternion linear systems (\ref{1.1}) rather than the real version (\ref{6.3}) by QCG-type methods. %In addition, the numerical result shows that the convergence behavior of the two methods is different.

One kind of  QQMR  method proposed by Li, Wang, and Zhang \cite{lwz2024} is based on quaternion coupled three-term biconjugate orthonormalization procedure and generates two sequences of vectors $\mathbf{v}_1,\mathbf{v}_2,\cdots$ and $\mathbf{w}_1,\mathbf{w}_2,\cdots$ by two three-term recurrence relations such that for all $m=1,2,\cdots$, $\mathscr{V}_m=[\mathbf{v}_1,\mathbf{v}_2,\cdots,\mathbf{v}_m]$ and $\mathscr{W}_m=[\mathbf{w}_1,\mathbf{w}_2,\cdots,\mathbf{w}_m]$, we have
\begin{subequations}
\begin{align}
 \mathscr{A}\mathscr{V}_m &=\mathscr{V}_m\mathscr{H}_m+\rho_{m+1}\mathbf{v}_{m+1}\mathbf{e}_m^*,\\
  \mathscr{A}^*\mathscr{W}_m &=\mathscr{W}_{m}\Gamma_{m}^{-1}\widehat{\mathscr{H}}_{m}\Gamma_m+\mathbf{w}_{m+1}\rho_{m+1}\mathbf{e}_m^*\Gamma_m,\\
  \mathscr{W}_m^*\mathscr{V}_m&=\mathscr{D}_m,\mathscr{W}_m^*\mathscr{A}\mathscr{V}_m=\mathscr{D}_m\mathscr{H}_m,
\end{align}
\end{subequations}
where $\mathscr{D}_m\in\mathbb{Q}^{ m\times m}$ is an invertible    diagonal quaternion matrix,  $\Gamma_m\in\mathbb{R}^{m\times m}$ is a real diagonal matrix, $\mathscr{H}_m$ and $\widehat{\mathscr{H}}_m\in\mathbb{Q}^{ m\times m}$ are   tridiagonal quaternion matrices.
%where $\mathscr{D}_m=\mathrm{diag}(\sigma_1,\sigma_2,\cdots,\sigma_m)\in\mathbb{Q}^{m\times m}$ with $\sigma_i\neq0$, $i=1,\cdots,m$, and $\Gamma_m=\mathrm{diag}(s_1,s_2,\cdots,s_m)\in\mathbb{R}^{m\times m}$, and

The approximations $\mathbf{x}_{m}=\mathbf{x}_0+\mathscr{V}_m\mathbf{z}_m$ are constructed such that $\mathbf{z}_m\in\mathbb{Q}^m$ is the solution of the minimization problem
\begin{equation*}
  \min\limits_{\mathbf{z}\in\mathbb{Q}^m}\|\|\mathbf{r}_0\|_2\mathbf{e}_1-\mathscr{H}_{m+1,m}\mathbf{z}\|_2,
\end{equation*}
where
\begin{equation*}
  \mathscr{H}_{m+1,m}=
  \left[\begin{array}{c}
  \mathscr{H}_m\\
  \rho_{m+1}\mathbf{e}_m^*
  \end{array}\right]\in\mathbb{Q}^{(m+1)\times m}.
\end{equation*}
Since $\mathcal{K}_m(\mathscr{A},\mathbf{r}_0)=\mathrm{span}(\mathbf{v}_1,\cdots,\mathbf{v}_m)$, QQMR is a Krylov subspace method with residuals
\begin{equation*}
 \mathbf{r}_m=\mathbf{r}_0- \mathscr{A}L_{m}(  \mathscr{A}, \mathbf{r}_0)
  =\mathscr{V}_{m+1}(\|\mathbf{r}_0\|_2\mathbf{e}_1-\mathscr{H}_{m+1,m}\mathbf{z}).
\end{equation*}
But $\|\mathbf{r}_m\|_2$ is not minimal over the Krylov subspace since $\mathscr{V}_{m+1}$ is not partially unitary. Comparing with QNHERLQ and QNHERQR, we have the following remarks:
\begin{itemize}
\item QQMR is a Krylov subspace method, so the convergence depends on the spectrum and Jordan structure of $\mathscr{A}$. %But the spectrum and Jordan structure of $\mathscr{A}$ may not be special.
\item  It is difficult to compare the number of iteration steps in general because convergence of the  QQMR  method depends on the eigenvalues while that of both QNHERLQ and QNHERQR methods depends on the singular values.
\item Computational complexity at each iteration   is almost the same for QQMR and the proposed methods. In fact, all methods work with two matrix-vector multiplications and the solution of a least squares problem or quaternion linear system with  tridiagonal coefficients matrices by an updated LQ or QR factorization.
\end{itemize}

\section{Numerical experiments}
In this section,   several examples, including the Lorenz attractor, randomly data coming from the suite
sparse Matrix Collection and color image deblurring problems, are given to illustrate the feasibility and validity of the proposed methods compared with QGMRES \cite{jn2021}, QQMR \cite{lwz2024}  and deconvtv \cite{vtv}
solvers. All experiments were conducted on a personal computer in Matlab 2019b with AMD Ryzen 7-7735H 3.20GHz and 32.00 GB memory and started from the initial guess with the zero vector. The symbols $\textbf{CPU}$ and $\textbf{IT}$ denote the elapsed computing time in seconds and the number of iteration steps, respectively.  Without further illustration, all computations were terminated once the relative residual  $\textbf{RR}<10^{-6}$ or the maximum iteration  exceeds 5000, where
\begin{equation}\label{6-1}
    \textbf{RR}=\frac{\|\textbf{b}-\mathscr{A}\textbf{x}_j\|_2}{\|\textbf{b}\|_2}.
\end{equation}

\begin{example}\label{example-1}
{\rm Consider the quaternion linear systems \eqref{1.1} with its coefficient matrix and right-hand side vector defined as
    $$\begin{aligned}
\mathscr{A}&={A}_0+{A}_1\mathbf{i}+{A}_2\mathbf{j}+{A}_3\mathbf{k},\\
\mathbf{b}&={b}_0+{b}_1\mathbf{i}+{b}_2\mathbf{j}+{b}_3\mathbf{k},
\end{aligned}
    $$
    where ${A}_1=1.5*{A}_0,$ $ {A}_2=2*{A}_0,$ $ {A}_3=0.5*{A}_0$ with ${A}_0$ coming from the ``Suite Sparse Matrix Collection"
\footnote{The Suite Sparse Matrix Collection is available on https://sparse.tamu.edu/} at the University of Florida. Here, the right-hand side vector was chosen such that the vector of all ones is the exact solution of Eq.\eqref{1.1}, and the matrix ${A}_0$ was set as $\textbf{rw136}$ of size $136\times 136$, $\textbf{gre512}$ of size $512\times 512$, $\textbf{rdb1250}$ of size $1182\times 1182$, and $ \textbf{dw1024}$ of size $2048\times 2048$, respectively.}
\end{example}

We perform  QGMRES, QQMR, QNHERLQ and QNHERQR solvers for solving Eq.\eqref{1.1}, and then
report  the numerical results in Table \ref{Table-1}, in which we know that the QNHERLQ and QNHERQR solvers vastly outperform QGMRES and QQMR in terms of the elapsed computing time, despite the fact that the number of iterations of QGMRES among all tested algorithms is smallest. More precisely, the convergence rate of QNHERLQ and QNHERQR solvers is about twice the latter two. Moreover, the  elapsed computing time and iterations required by QNHERQR are commonly less than those of QNHERLQ, which converges faster than the latter.
The obtained observations are also intuitively shown in Figure \ref{fig:1}, in which we depict  the convergence curves of all algorithms for the case $\textbf{rw136}$. The curves in this figure illustrate that the convergence behavior of QNHERQR performs smoother than  QNHERLQ, which is more reliable.

\setlength{\tabcolsep}{0.3\tabcolsep}
\begin{table}[!t]
	\caption{Numerical results of Example \ref{example-1}.}\label{Table-1}
 \renewcommand\arraystretch{1.0}
	\centering
	\scalebox{.82}{\begin{tabular}{|c|c|c|c|c|c|c|}
		\hline
		\textbf{Data set}     &\bm{$n$}  &\textbf{Algorithms}    &\textbf{CPU}         &\textbf{IT}     &\textbf{RR}                &\textbf{Application discipline}\\
		\hline
		\multirow{4}{*}{\textbf{rw136}}
		&\multirow{4}{*}{136} &{QGMRES}\cite{jn2021}        &0.2960&136  & 8.7893e-07&\multirow{4}{*}{Statistical/Mathematical Problem}\\
		&\multicolumn{1}{c|}{}&QQMR \cite{lwz2024}  &0.1974&525&9.4431e-07
&\multicolumn{1}{c|}{}\\
        &\multicolumn{1}{c|}{}&QNHERLQ           & 0.0882&216&9.2136e-07&\multicolumn{1}{c|}{}\\
        &\multicolumn{1}{c|}{}&QNHERQR           &0.0418   &192 & 9.4764e-07 &\multicolumn{1}{c|}{}\\

\hline
		\multirow{4}{*}{\textbf{gre512}}
		&\multirow{4}{*}{512} &{QGMRES} \cite{jn2021}        &5.9708&513 & 9.9340e-07&\multirow{4}{*}{Directed Weighted Graph}\\
		&\multicolumn{1}{c|}{}&QQMR\cite{lwz2024}   &3.9659 &979&9.6255e-07
&\multicolumn{1}{c|}{}\\
        &\multicolumn{1}{c|}{}&QNHERLQ           & 1.7113& 365&8.5943e-07 &\multicolumn{1}{c|}{}\\
        &\multicolumn{1}{c|}{}&QNHERQR           & 1.6339&358&9.5202e-07&\multicolumn{1}{c|}{}\\

        	\hline
		\multirow{4}{*}{\textbf{rdb1250}}
		&\multirow{4}{*}{1182}  &{QGMRES} \cite{jn2021} &42.2114&1076&9.2663e-07&\multirow{4}{*}{Computational Fluid Problem}\\
		&\multicolumn{1}{c|}{}&QQMR \cite{lwz2024}  &31.4523&3193&9.7686e-07  &\multicolumn{1}{c|}{}\\
        &\multicolumn{1}{c|}{}&QNHERLQ           &
    14.7541&2516& 9.0242e-07&\multicolumn{1}{c|}{}\\
        &\multicolumn{1}{c|}{}&QNHERQR           &   11.1605&2208& 9.8902e-07&\multicolumn{1}{c|}{}\\
		\hline
		\multirow{4}{*}{\textbf{dw1024}}
		&\multirow{4}{*}{2048}  &{QGMRES} \cite{jn2021}      &179.6525&2012     &8.6783e-07&\multirow{4}{*}{Electromagnetics Problem}\\
		&\multicolumn{1}{c|}{}&QQMR \cite{lwz2024} &133.2831& 4465 &9.9353e-07&\multicolumn{1}{c|}{}\\
        &\multicolumn{1}{c|}{}&QNHERLQ          &84.1379 &1669    &9.8209e-07&\multicolumn{1}{c|}{}\\
        &\multicolumn{1}{c|}{}&QNHERQR           &61.2343& 1497&9.9149e-07&\multicolumn{1}{c|}{}\\
		\hline
	\end{tabular}}
\end{table}

\begin{figure}[!htbp]
	\hspace{-8em}
	\begin{center}
		\begin{minipage}[c]{0.7\textwidth}
	\includegraphics[width=3in]{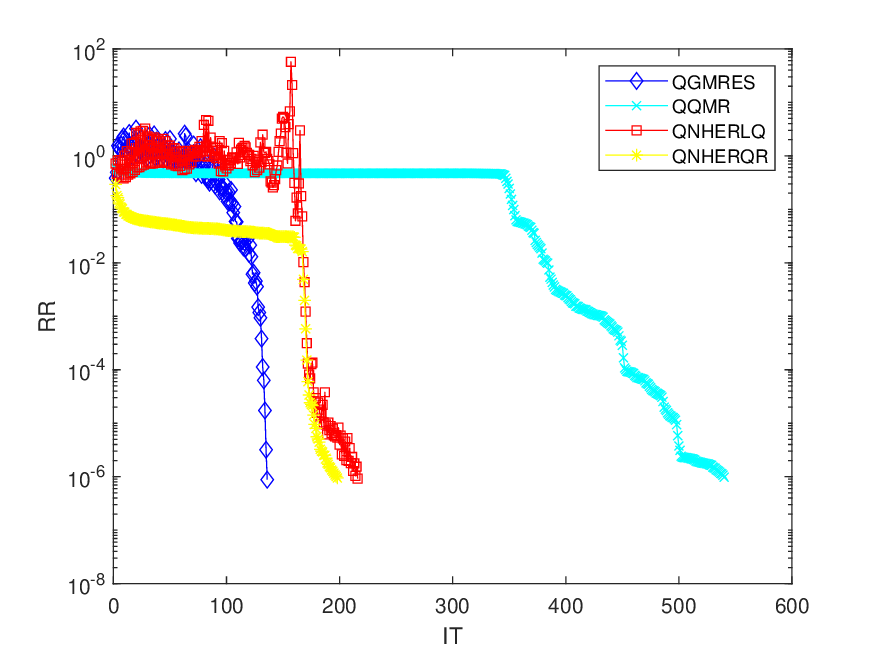}
		\end{minipage}
	\end{center}
	\vspace{-0.5em}\caption{ Convergence curves of Example \ref{example-1} with \textbf{rw136}.\label{fig:1}}
\end{figure}

Note that sensors collect various signals from engineering problems in three dimensions. Herein, we denote a three-dimensional signal by a pure quaternion function of time $\mathbf{x}(t)={x}_r(t)\mathbf{i}+{x}_g(t)\mathbf{j}+{x}_b(t)\mathbf{k}$, in which ${x}_r(t),{x}_g(t)$, and ${x}_b(t)$ represent real functions, such as red, green and blue channels, respectively. As shown in \cite{jn2021}, the collected signals $\mathbf{x}(t-p),\cdots,\mathbf{x}(t)$, regarded as input signals, often act on the desired quaternion filters $\mathbf{w}(0),\cdots,\mathbf{w}(p)$ with $\mathbf{w}(s)={w}_0(s)+{w}_r(s)\mathbf{i}+{w}_g(s)\mathbf{j}+{w}_b(s)\mathbf{k}$, $s=0,\cdots,p$, such that the filtered output can match the target signal $\mathbf{y}(t)$, i.e.,
\begin{equation}\label{6-2}
\mathbf{y}(t)=\sum_{s=0}^{p}\mathbf{x}(t-s)\mathbf{w}(s).
\end{equation}
If we now take $q$ signal data along the time series, this model can be reformulated as the following quaternion linear systems
\begin{equation}\label{6-3}
\mathscr{X}\mathbf{w}=\mathbf{y},
\end{equation}
where $$\begin{aligned}
\mathscr{X}&=\begin{bmatrix}
\mathbf{x}(t)&\mathbf{x}(t-1)&\mathbf{x}(t-2)&\cdots&\mathbf{x}(t-p)\\
\mathbf{x}(t+1)&\mathbf{x}(t)&\mathbf{x}(t-1)&\cdots&\mathbf{x}(t-p+1)\\
\vdots&\vdots&\ddots&\vdots&\vdots\\
\mathbf{x}(t+q)&\mathbf{x}(t+q-1)&\mathbf{x}(t+q-2)&\cdots&\mathbf{x}(t+q-p)\\
\end{bmatrix},
\end{aligned}
$$
$$
\begin{aligned}
\mathbf{w}&=[\mathbf{w}(0) \quad \mathbf{w}(1) \quad \mathbf{w}(2)\quad \cdots \quad\mathbf{w}(p)]^T,\\
\mathbf{y}&=[\mathbf{y}(t)\quad \mathbf{y}(t+1) \quad \mathbf{y}(t+2) \quad\cdots \quad\mathbf{y}(t+q)]^T.\\
\end{aligned}$$
Let $\mathscr{X}=X_0+X_1\mathbf{i}+X_2\mathbf{j}+X_3\mathbf{k}\in \mathbb{Q}^{n\times n},$ $\mathbf{y}=y_0+y_1\mathbf{i}+y_2\mathbf{j}+y_3\mathbf{k}\in \mathbb{Q}^{n}$ and $\mathbf{w}=w_0+w_1\mathbf{i}+w_2\mathbf{j}+w_3\mathbf{k}\in \mathbb{Q}^{n}$. Then  we employ   QGMERS, QQMR, QNHERQR and QNHERLQ to solve the quaternion linear systems \eqref{6-3}.

\begin{table}[htbp]
	\caption{Numerical results of Example  \ref{example-2}.}\label{Table-2}
 \renewcommand\arraystretch{1.1}
	\centering
		\scalebox{.9}{\begin{tabular}{|c|c|c|c|c|c|}
\hline
\textbf{Dimension}  &\textbf{Algorithms}  &\textbf{CPU}  &\textbf{IT}&\textbf{RR}\\   \hline
\multirow{5}{*}{100}  &{QGMRES} \cite{jn2021}    &3.2784   &476    &7.8387e-07\\
		&QQMR \cite{lwz2024}   &2.3891	 &1415   &9.9413e-07\\
        &QNHERLQ &1.1579    & 595& 8.0501e-07\\
        &QNHERQR &1.0573&593     &7.3231e-07\\
		\hline
\multirow{5}{*}{200}  &{QGMRES}\cite{jn2021}    &9.3561    &759     &8.8059e-07\\
		&QQMR   \cite{lwz2024}  &6.9652	 &1747 &9.9424e-03\\
        &QNHERLQ  &3.4860    &828     & 8.8999e-07\\
        &QNHERQR&3.2452&815    &8.5495e-07\\
		\hline
\multirow{5}{*}{300}  &{QGMRES} \cite{jn2021} & 57.9254&1145&9.3568e-07\\
		&QQMR  \cite{lwz2024}&42.9523&2782     &9.7893e-07\\
        &QNHERLQ&31.1678&1396&9.6234e-07\\
        &QNHERQR&29.7549&1374&9.4581e-07\\
		\hline
        \multirow{5}{*}{400}  &{QGMRES} \cite{jn2021}   &156.8792 &1589     &9.6138e-07\\
		&QQMR \cite{lwz2024} &123.4569 &  4962   &9.9070e-07\\
        &QNHERLQ&89.8536&1802&9.1405e-07\\
        &QNHERQR&85.7947&1762&8.1917e-07\\
		\hline
	\end{tabular}}
\end{table}

\begin{example}\label{example-2}
\rm Consider Lorenz attractor, i.e.,  a three-dimensional nonlinear system used
originally to model atmospheric turbulence \cite{S2001}. Mathematically, the Lorenz system is the following coupled differential equations
\begin{equation}\label{6-4}
\frac{\partial x}{\partial t}=\alpha(y-x),\quad \frac{\partial y}{\partial t}=x(\rho-z)-y,\quad \frac{\partial z}{\partial t}=xy-\beta z,
\end{equation}
where $\alpha,\beta$ and $\rho$ are positive. After selecting the parameters $\alpha$, $\beta$   and $\rho$,  the above equations \eqref{6-4}  can be soloved by the MATLAB command $\text{ODE45}(f(t, [x, y, z]),$ $ [0, T],[2,3,4])$, where $T>0$. Here we set $\alpha=10$, $\beta=8/3$ and $\rho=28$ for characterizing the chaotic behavior of the Lorenz attractor.  Let $y_r(t),y_g(t)$ and $y_b(t)$ be the solutions of Eqs.\eqref{6-4}, i.e., the target signal
$$
\mathbf{y}(t)=y_r(t)\mathbf{i}+y_g(t)\mathbf{j}+y_b(t)\mathbf{k}.
$$
Then the input signal  with the random noise has the following form
$$\mathbf{x}(t)=y_r(t-1)\mathbf{i}+y_g(t-1)\mathbf{j}+y_b(t-1)\mathbf{k}+\mathbf{n}(t),$$
where $\mathbf{n}(t)$ is a random noise. The obtained quaternion systems are shown as Eq.\eqref{6-3} \cite{jn2021}.
\end{example}

Starting with the zero vector, we run all mentioned iterative methods for solving the above problem with different dimensions and  report  CPU, IT, and RR in  Table \ref{Table-2}, from which one may see that under the same tolerance,  the iteration number   of QGMRES is commonly less than the other solvers, whereas the elapsed computing time required by QNHERLQ and QNHERQR is much less than QGMRES and QQMR. The convergence curves with $n=100$ are plotted in Figure \ref{fig:2}, in which one can see that QNHERQR still converges more smoothly than that of QNHERLQ. This coincides with the given results.

\begin{figure}[!htbp]
	\hspace{-8em}
	\begin{center}
		\begin{minipage}[c]{0.7\textwidth}
	\includegraphics[width=3in]{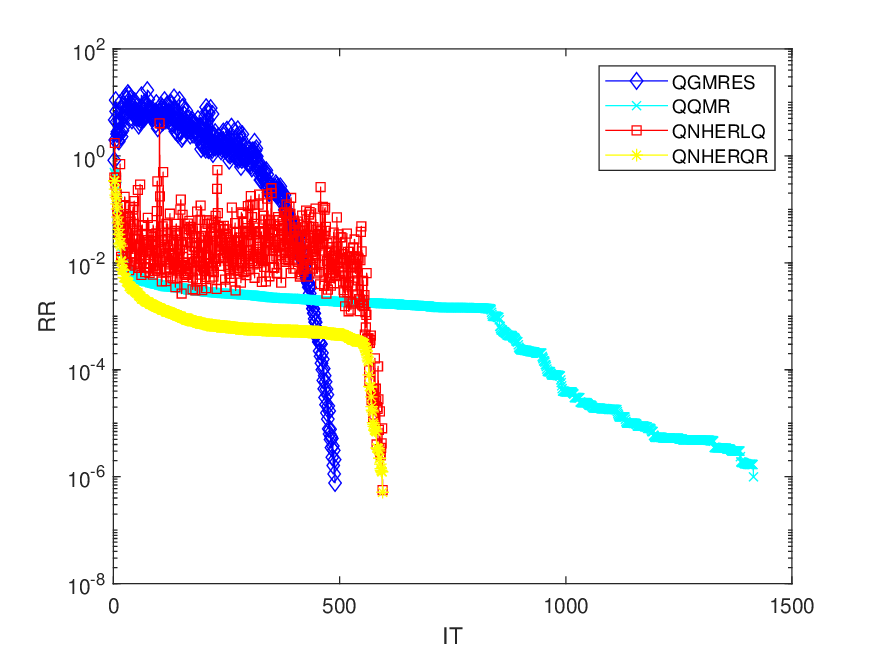}
		\end{minipage}
	\end{center}
	\vspace{-0.5em}\caption{ Convergence curves of Example \ref{example-2} with $n=100$.\label{fig:2}}
\end{figure}

In what follows, we test the effectiveness of the proposed algorithms for solving model \eqref{1.1} arising from color image deblurring problems.  The original color image can be expressed by a pure quaternion matrix  $\mathscr{X}=X_1\mathbf{i}+X_2\mathbf{j}+X_3\mathbf{k}\in \mathbb{Q}^{n\times n}$, %consists of $n\times n$ color image pixel values in the range $[0, d]$, where $d=255$ represents the maximum possible pixel value of the image,
where $X_1,X_2,X_3\in \mathbb{R}^{n\times m}$ denote the red, green and blue channels, respectively.
Let $\mathbf{x}=\mbox{vec}(\mathscr{X})\in \mathbb{Q}^{n^2}$ be a quaternion vector obtained by stacking the columns of $\mathscr{X}$, and $\mathscr{A}$ be a blurring matrix. Such color image contaminated by $\mathscr{A}$ is the blurred color image $\mathbf{b}\in\mathbb{Q}^{n^2}$. It is easy to see that the model of color image formation can be modelled as    the quaternion linear systems \eqref{1.1}.

Note that the total variation regularized least-squares deconvolution (deconvtv) \cite{vtv}, which can be referred as a variation of the popularly known alternating direction methods of multipliers (ADMM), is a popular method for color image deblurring problems. Hence we compare the proposed algorithms with QGMRES  \cite{jn2021}, QQMR \cite{lwz2024}, and deconvtv \cite{vtv}.

We evaluate the performance of all tested algorithms  by four measures, i.e., the computing time ($\mathbf{CPU}$), the peak signal-to-noise ratio ($\mathbf{PSNR}$) in decibel ($\mathbf{dB}$), structural similarity ($\mathbf{SSIM}$) \cite{WBS2004}, and relative error ($\mathbf{RE}$). The RE and PSNR between the recovered color image
$\mathscr{X}_{\text{restored}}\in\mathbb{Q}^{I_1\times I_2}$ and  true color image
$\mathscr{X}_{\text{true}}\in\mathbb{Q}^{I_1\times I_2}$ are defined by
\begin{equation*}
 \mathrm{RE}( \mathscr{X}_{\text{true}},\mathscr{X}_{\text{restored}})=
 \dfrac{\|\mathscr{X}_{\text{true}}-\mathscr{X}_{\text{restored}}\|}{\|\mathscr{X}_{\text{true}}\|}
\end{equation*}
 and
\begin{equation*}
 \mathrm{PSNR}( \mathscr{X}_{\text{true}},\mathscr{X}_{\text{true}})=
 10\log_{10}\Big(\dfrac{3I_1I_2d^2}{\|\mathscr{X}_{\text{true}}-\mathscr{X}_{\text{restored}}\|^2}\Big),
\end{equation*}
respectively. SSIM is used to measure the similarity of true image  $\mathscr{X}_{\text{true}}\in\mathbb{Q}^{I_1\times I_2}$ and restored image  $\mathscr{X}_{\text{restored}}\in\mathbb{Q}^{I_1\times I_2}$ in luminance, contrast and structure, which is defined by
\begin{equation*}
  \mathrm{SSIM}(\mathscr{X}_{\text{true}},\mathscr{X}_{\text{restored}})=
  \dfrac{(2\mu_{\mathscr{X}_{\text{restored}}} \mu_{\mathscr{X}_{\text{true}}}+c_1)
  (\sigma_{\mathscr{X}_{\text{restored}}\mathscr{X}_{\text{true}}}+c_2)}{(\mu_{\mathscr{X}_{\text{restored}}}^2+\mu_{\mathscr{X}_{\text{true}}}^2+c_1)
  (\sigma_{\mathscr{X}_{\text{restored}}}^2+\sigma_{\mathscr{X}_{\text{true}}}^2+c_2)},
\end{equation*}
where $\mu_{\mathscr{X}_{\text{restored}}}$ and $\mu_{\mathscr{X}_{\text{true}}}$ are the mean values of  $\mathscr{X}_{\text{restored}}$ and   $\mathscr{X}_{\text{true}}$, respectively, $\sigma_{\mathscr{X}_{\text{restored}}}^2$ and $\sigma_{\mathscr{X}_{\text{true}}}^2$ are the standard variances of $\mathscr{X}_{\text{restored}}$ and $\mathscr{X}_{\text{true}}$, respectively, $\sigma_{\mathscr{X}_{\text{restored}}\mathscr{X}_{\text{true}}}$ is the covariance of $\mathscr{X}_{\text{restored}}$ and $\mathscr{X}_{\text{true}}$, $c_1$ and $c_2$ are two constants to stabilize the division with small denominator.
Here and after, we stop all tested solvers if the elapsed computing time reaches 200 seconds.

\begin{example}\label{example-3}
{\rm In this example, we first consider three original color images, including the ``\textbf{Dawn}", ``\textbf{Desk}", ``\textbf{Balls}" color images of size $512\times 512$, blurred by a single channel blurring matrices $\mathscr{A}=(a_{ij})\in \mathbb{R}^{n\times n}$, which models the motion blur. We generate  the convolution kernel by the MATLAB function `fspecial(`motion', len, theta)', with the blur length (len = 50) and the direction (theta = 0), indicating horizontal motion. The convolution operation, performed under zero boundary conditions, was then linearized to obtain the blurring matrix $\mathscr{A}$, see \cite{psf} in detail.}
\end{example}

We execute  the deconvtv, QGMRES, QQMR, QNHERLQ, and QNHERQR for solving Eq.\eqref{1.1}, respectively, and then report   the numerical results and the restored images in Table \ref{Table-3} and Figure \ref{fig:3}. We see from Table  \ref{Table-3} that the restoration quality achieved by QGMRES, QQMR, QNHERLQ, and QNHERQR  is markedly superior to the deconvtv.  Compared with  QGMRES and QQMR, the proposed methods restore images with higher quality, which are more competitive. Besides, we observe that QNHERQR performs slightly better than   QNHERLQ in both PSNR and SSIM.

%\begin{table}[htbp]
%	%\footnotesize
% \caption{Numerical results of Example \ref{example-3}\label{Table-3}}
%	\renewcommand\arraystretch{1.1}
%	\centering
%		\scalebox{.9}{\begin{tabular}{cccccccccccccccccccc} \hline
%		&       &       \bf Dawn   &       &       &       & \bf Desk       &       &       &       & \bf Balls   &       &       &       &       &       &  \\
%		\cline{3-5}\cline{7-9} \cline{11-13} 
%		\bf Algorithm  & &  \bf PSNR &   \bf SSIM & \bf RE & &  \bf PSNR &  \bf SSIM & \bf RE& & \bf PSNR &  \bf SSIM &  \bf RE \\\hline
%		{deconvtv}  &&23.19&0.84&6.86e-03&   &25.88&0.81&7.23e-03	&&17.29&0.64&9.85e-03\\
%		{QGMRES}  &&34.87&0.97&2.67e-03	&   &28.01&0.91&5.67e-03& &23.70&0.85&7.91e-03\\
%		{QQMR} &&35.12&0.97&2.55e-03&   &28.33&0.91&5.09e-03	&&23.98&0.86&7.78e-03\\
%		{QNHERLQ }&&36.16&0.98&2.01e-03&&29.25&0.94&4.63e-03	&&24.60&0.88&6.97e-03\\
%		{QNHERQR }&&36.34&0.99&1.93e-03& &29.46&0.94&4.32e-03	&&24.84&0.89&6.12e-03\\ \hline	
%	\end{tabular}}
%\end{table}

\begin{table}[htbp]
	%\footnotesize
	\caption{Numerical results of Example \ref{example-3}\label{Table-3}}
 \renewcommand\arraystretch{1.1}
	\centering
			\scalebox{.9}{\begin{tabular}{|c|c|c|c|c|} \hline
			\bf Case &\bf Algorithm &\bf PSNR & \bf SSIM &\bf RE \\  \hline
 &{deconvtv} \cite{vtv}&23.19&0.84&6.86e-03\\
  &{QGMRES}\cite{jn2021} &34.87&0.97&2.67e-03\\
 \textbf{Dawn} &{QQMR}\cite{lwz2024}&35.12&0.97&2.55e-03\\
  &{QNHERLQ }&36.16&0.98&2.01e-03\\
 &{QNHERQR }&36.34&0.99&1.93e-03\\ \hline

&{deconvtv} \cite{vtv}&25.88&0.81&7.23e-03\\
  &{QGMRES}\cite{jn2021} &28.01&0.91&5.67e-03\\
 \textbf{Desk} &{QQMR}\cite{lwz2024}&28.33&0.91&5.09e-03\\
  &{QNHERLQ}&29.25&0.94&4.63e-03\\
 &{QNHERQR }&29.46&0.94&4.32e-03\\ \hline

&{deconvtv} \cite{vtv}&17.29&0.64&9.85e-03\\
  &{QGMRES}\cite{jn2021} &23.70&0.85&7.91e-03\\
 \textbf{Balls} &{QQMR}\cite{lwz2024}&23.98&0.86&7.78e-03\\
  &{QNHERLQ }&24.60&0.88&6.97e-03\\
 &{QNHERQR }&24.84&0.89&6.12e-03\\ \hline

		\end{tabular}}
\end{table}

\begin{figure}[!htbp]
	\hspace{-5em}
	\begin{center}
		\begin{minipage}[c]{1.0\textwidth}
			\includegraphics[width=5in]{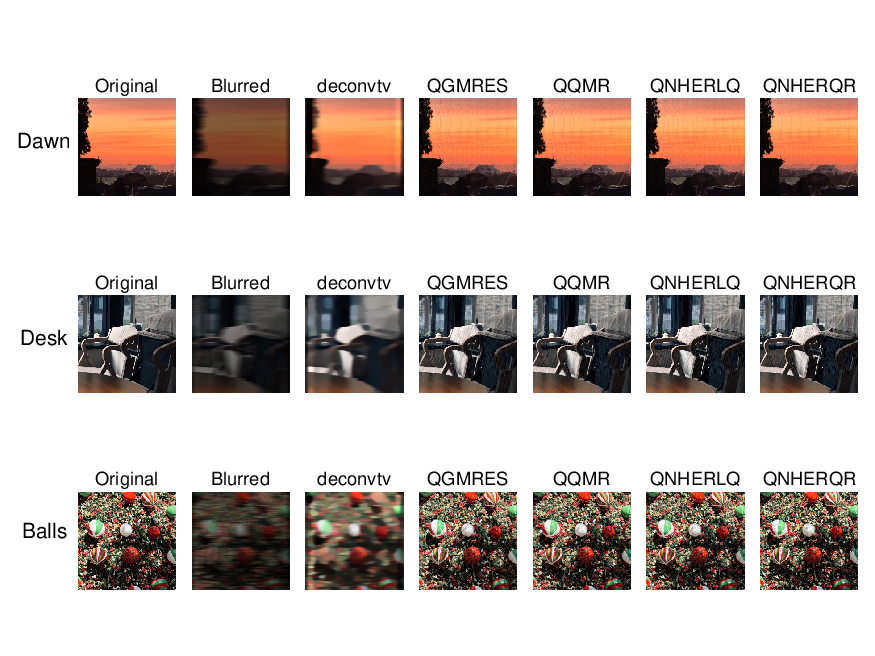}
		\end{minipage}
	\end{center}
\vspace{-1.5em}\caption{Left to Right: the original $512\times 512$ images, the blurred images with
 single channel, the deblurring images were restored by QGMRES, QQMR, QNHERLQ, and QNHERQR.}\label{fig:3}
\end{figure}

\begin{example}\label{example-4}
\rm In this example, we  recover three color images, ``\textbf{Ship}'', ``\textbf{Dog}'' and ``\textbf{Pizza}'' of size {$100\times 100$} in PNG format to further show the effectiveness of our algorithms. Notice that these color images contain  four channels, i.e., transparency, red, green, and blue channels {\rm \cite{Smith2}}, and can be characterized by a quaternion matrix $\mathscr{\hat{X}}=\hat{{X}}_0+\hat{{X}}_1\mathbf{i}+\hat{X}_2\mathbf{j}+\hat{X}_3\mathbf{k}\in \mathbb{Q}^{n\times n}$. Assume now that the three color images are blurred by a multichannel blurring matrix $\mathscr{A}$, e.g., $\mathscr{A}=A_0+A_1\mathbf{i}+A_2\mathbf{j}+A_3\mathbf{k}\in \mathbb{Q}^{n^2\times n^2}$, where $A_0=B_1\otimes B_2\in\mathbb{R}^{n^2\times n^2}$ with
$B_1=(b^{(1)}_{ij})$ and $B_2=(b^{(2)}_{ij})$ being the $n\times n$ Toeplitz matrices given by
\begin{equation*}
\label{rsr}
\begin{aligned}
    b^{(1)}_{ij}&=\left\{
\begin{matrix}
&\frac{1}{\sigma\sqrt{2\pi}}exp(-\frac{{(i-j)}^2}{2\sigma^2}),  &|i-j|\leq r,\\
	&0,                                                             &\mbox{otherwise},
\end{matrix}\right.
\end{aligned}
\end{equation*}
and
\begin{equation*}
\begin{aligned}
b^{(2)}_{ij}&=\left\{
\begin{matrix}
&\frac{1}{2s-1},    &|i-j|\leq s,\\
&0,                 &\mbox{otherwise},	
\end{matrix}\right.
\end{aligned}
\end{equation*}
respectively, and $A_1=A_0, A_2=1.5*A_0, A_3=2*A_0$. Here the parameters are set to be $\sigma=1$, $r=4$ and $s=7$.
\end{example}

Applying QGMRES, QQMR, QNHERLQ, and QNHERQR solvers for restoring three blurred color images, we then record  the numerical results in Table \ref{Table-4}, and display  the restored color images in Figure \ref{fig:4}. As observed from   Table \ref{Table-4}, all tested algorithms for restoring the
blurred color images are feasible, in which  QNHERLQ and QNHERQR seem to recover
with much higher quality than other solvers.
Specifically, in comparison with QGMRES and QQMR,
QNHERLQ improves the PSNR
and SSIM values about $3\%$, and QNHERQR improves  the PSNR and SSIM values about $5\%$.
This also illustrates that QNHERQR still performs
slightly better than QNHERLQ.

\begin{table}[htbp]
	%\footnotesize
		\caption{Numerical results of Example \ref{example-4}\label{Table-4}}
 \renewcommand\arraystretch{1.1}
		\centering
			\scalebox{.9}{\begin{tabular}{|c|c|c|c|c|} \hline
			\bf Case &\bf Algorithm &\bf PSNR & \bf SSIM &\bf RE \\  \hline
  &{QGMRES}\cite{jn2021} &24.94&0.90&6.01e-03\\
 \textbf{Ship} &{QQMR}\cite{lwz2024}&25.38&0.90&5.73e-03\\
  &{QNHERLQ }&26.17&0.92&5.25e-03\\
 &{QNHERQR }&26.90&0.93&4.84e-03\\ \hline

  &{QGMRES}\cite{jn2021} &22.74&0.90&7.36e-03\\
 \textbf{Dog} &{QQMR}\cite{lwz2024}&23.68&0.90&6.58e-03\\
  &{QNHERLQ}&24.76&0.92&7.65e-03\\
 &{QNHERQR }&25.63&0.93&5.12e-03\\ \hline
  &{QGMRES} \cite{jn2021} &20.93&0.88&8.89e-03\\
 \textbf{Pizza} &{QQMR}\cite{lwz2024}&21.09&0.88&8.46e-03\\
  &{QNHERLQ }&21.84&0.89&7.91e-03\\
 &{QNHERQR }&22.78&0.90&7.25e-03\\ \hline
		\end{tabular}}

\end{table}

\begin{figure}[!htbp]
	\hspace{-5em}
	\begin{center}
		\begin{minipage}[c]{1.0\textwidth}
			\includegraphics[width=5in]{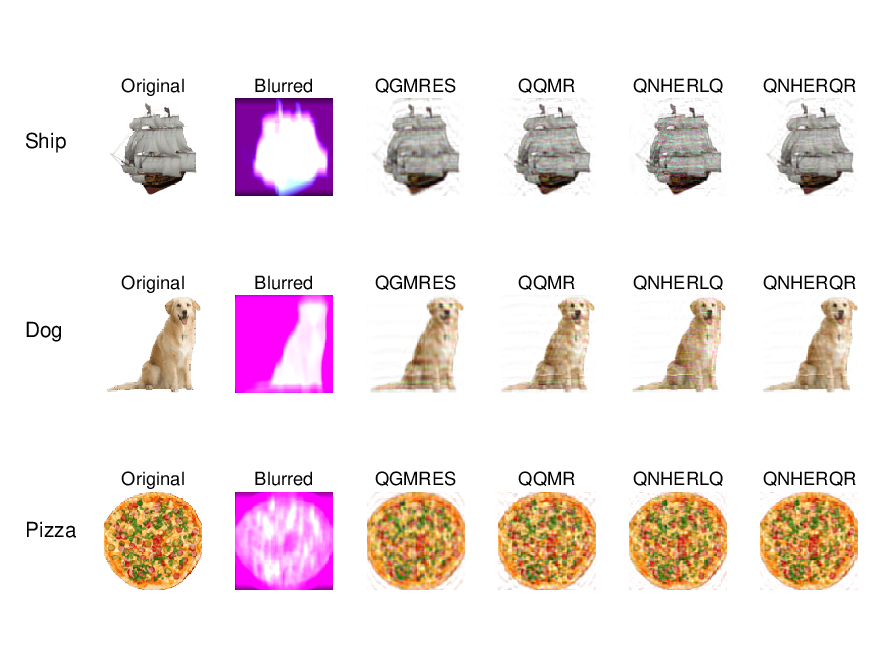}
		\end{minipage}
	\end{center}
\vspace{-1.5em}\caption{Left to Right: the original $100\times 100$ images, the blurred images with
multichannel, the deblurring images were restored by QGMRES, QQMR, QNHERLQ  and QNHERQR.}\label{fig:4}
\end{figure}

\section{Conclusion}\label{sec13}
In this paper, we propose two structure-preserving quaternion conjugate gradient-type methods, i.e., the QNHERLQ method and the QNHERQR method, which are based on   the Galerkin condition and minimal residual condition, respectively, for solving the quaternion linear systems (\ref{1.1}). Theoretical analysis for QNHERLQ and  QNHERQR is discussed. Some numerical results on random data, three-dimensional signal filtering problem  and color image restoration demonstrate the feasibility and effectiveness of the proposed method, which show that the proposed algorithms are not only with theoretically interesting  but also are valuable tools for solving non-Hermitian quaternion linear systems.  In the future, we will study the preconditioned variants to speed up the convergence.

\appendix
\section{Proof of Theorem \ref{thm2}}\label{secA1}
We will prove the assertion by induction on the order $n$. For $n=1$, the result is trivial. Assume that the assertion holds for $1\leq n<l$, i.e., there exist orthogonally JRS-symplectic matrices $P,Q\in\mathbb{R}^{4n\times 4n}$ such that
\begin{equation}\label{eq1}
  P^TMQ=T=
  \left[
  \begin{array}{cccc}
  T_0&T_2&T_1&T_3\\
  -T_2&T_0&T_3&-T_1\\
  -T_1&-T_3&T_0&T_2\\
  -T_3&T_1&-T_2&T_0
  \end{array}
  \right]\in\mathbb{R}^{4n\times 4n}
\end{equation}
is a strictly JRS-tridiagonal matrix,  where $T_0$ is tridiagonal and $T_1,T_2,T_3\in\mathbb{R}^{n\times n}$ are diagonal. For $n=l$, by Proposition \ref{thm1}, there exists an orthogonally JRS-symplectic matrix $W\in\mathbb{R}^{4l\times 4l}$ such that $W^TMW=H$ is an upper JRS-Hessenberg
matrix. Denote
\begin{eqnarray*}
% \nonumber to remove numbering (before each equation)
   && H_0=\left[
   \begin{array}{cccc}
   h_{11}^{(0)}&h_{12}^{(0)}&h_{13}^{(0)}&H_{14}^{(0)}\\
   h_{21}^{(0)}&h_{22}^{(0)}&h_{23}^{(0)}&H_{24}^{(0)}\\
    0&h_{32}^{(0)}&h_{33}^{(0)}&H_{34}^{(0)}\\
     0&0&H_{43}^{(0)}&H_{44}^{(0)}\\
   \end{array}
   \right],
   H_1=\left[
   \begin{array}{cccc}
   h_{11}^{(1)}&h_{12}^{(1)}&h_{13}^{(1)}&H_{14}^{(1)}\\
   0&h_{22}^{(1)}&h_{23}^{(1)}&H_{24}^{(1)}\\
    0& 0&h_{33}^{(1)}&H_{34}^{(1)}\\
     0&0&0&H_{44}^{(1)}\\
   \end{array}
   \right],\\
   &&  H_2=\left[
   \begin{array}{cccc}
   h_{11}^{(2)}&h_{12}^{(2)}&h_{13}^{(2)}&H_{14}^{(2)}\\
   0&h_{22}^{(2)}&h_{23}^{(2)}&H_{24}^{(2)}\\
    0& 0&h_{33}^{(2)}&H_{34}^{(2)}\\
     0&0&0&H_{44}^{(2)}\\
   \end{array}
   \right],
   H_3=\left[
   \begin{array}{cccc}
   h_{11}^{(3)}&h_{12}^{(3)}&h_{13}^{(3)}&H_{14}^{(3)}\\
   0&h_{22}^{(3)}&h_{23}^{(3)}&H_{24}^{(3)}\\
    0& 0&h_{33}^{(3)}&H_{34}^{(3)}\\
     0&0&0&H_{44}^{(3)}\\
   \end{array}
   \right],
\end{eqnarray*}
where $h_{ij}^{(r)}\in\mathbb{R}$ for $i,j=1,2,3$, $r=0,1,2,3$, $H_{i4}^{(r)}\in\mathbb{R}^{1\times (l-3)}$ for $i=1,2,3$, $H_{43}^{(0)}=[h_{43}^{(0)},0,\cdots,0]^T\in\mathbb{R}^{(l-3)\times 1}$, $H_{44}^{(0)}\in\mathbb{R}^{(l-3)\times (l-3)}$ is an upper Hessenberg matrix, and
$H_{44}^{(r)}\in\mathbb{R}^{(l-3)\times (l-3)}$ are upper triangular matrices for $r=1,2,3$.

Denote $\gamma_{12}=\sqrt{(h_{12}^{(0)})^2+(h_{12}^{(1)})^2+(h_{12}^{(2)})^2+(h_{12}^{(3)})^2}$ and take a $4l\times 4l$ generalized  symplectic Givens rotation $G^{(2)}$ defined as in (\ref{givens}) with $\cos\alpha_i=(h_{12}^{(i)})/\gamma_{12}$ for $i=0,1,2,3$ such that
\begin{equation}\label{eq2}
 \widehat{H}=HG^{(2)^T} =\left[
  \begin{array}{cccc}
   \widehat{H}_0& \widehat{H}_2& \widehat{H}_1& \widehat{H}_3\\
  - \widehat{H}_2& \widehat{H}_0& \widehat{H}_3&- \widehat{H}_1\\
  - \widehat{H}_1&- \widehat{H}_3& \widehat{H}_0& \widehat{H}_2\\
  - \widehat{H}_3& \widehat{H}_1&- \widehat{H}_2& \widehat{H}_0
  \end{array}
  \right],
\end{equation}
where
\begin{eqnarray*}
% \nonumber to remove numbering (before each equation)
   && \widehat{H}_0=\left[
   \begin{array}{cccc}
   h_{11}^{(0)}&\gamma_{12}&h_{13}^{(0)}&H_{14}^{(0)}\\
   h_{21}^{(0)}&\widehat{h}_{22}^{(0)}&h_{23}^{(0)}&H_{24}^{(0)}\\
    0&\widehat{h}_{32}^{(0)}&h_{33}^{(0)}&H_{34}^{(0)}\\
     0&0&H_{43}^{(0)}&H_{44}^{(0)}\\
   \end{array}
   \right],
   \widehat{H}_1=\left[
   \begin{array}{cccc}
  h_{11}^{(1)}&0&h_{13}^{(1)}&H_{14}^{(1)}\\
   0&\widehat{h}_{22}^{(1)}&h_{23}^{(1)}&H_{24}^{(1)}\\
    0& \widehat{h}_{32}^{(1)}&h_{33}^{(1)}&H_{34}^{(1)}\\
     0&0&0&H_{44}^{(1)}\\
   \end{array}
   \right],\\
   &&  \widehat{H}_2=\left[
   \begin{array}{cccc}
  h_{11}^{(2)}&0&h_{13}^{(2)}&H_{14}^{(2)}\\
   0&\widehat{h}_{22}^{(2)}&h_{23}^{(2)}&H_{24}^{(2)}\\
    0& \widehat{h}_{32}^{(2)}&h_{33}^{(2)}&H_{34}^{(2)}\\
     0&0&0&H_{44}^{(2)}\\
   \end{array}
   \right],
   \widehat{H}_3=\left[
   \begin{array}{cccc}
  h_{11}^{(3)}&0&h_{13}^{(3)}&H_{14}^{(3)}\\
   0&\widehat{h}_{22}^{(3)}&h_{23}^{(3)}&H_{24}^{(3)}\\
    0& \widehat{h}_{32}^{(3)}&h_{33}^{(3)}&H_{34}^{(3)}\\
     0&0&0&H_{44}^{(3)}\\
   \end{array}
   \right],
\end{eqnarray*}
Likewise, there exist a series of generalized symplectic Givens rotations $G^{(3)}$, $\cdots$, $G^{(l)}\in\mathbb{R}^{4l\times 4l}$ such that
\begin{equation}\label{eq3}
 \overline{\widehat{H}}=(HG^{(2)^T}\!)(G^{(l)}\cdots G^{(3)} )^T
 \!=\!HG^{(2)^T}\!\!G^{(3)^T}\!\!\cdots\! G^{(l)^T}\!=\!\left[
  \begin{array}{cccc}
  \!\! \overline{\widehat{H}}_0& \overline{\widehat{H}}_2& \overline{\widehat{H}}_1& \overline{\widehat{H}}_3\!\!\\
 \!\! - \overline{\widehat{H}}_2& \overline{\widehat{H}}_0& \overline{\widehat{H}}_3&- \overline{\widehat{H}}_1\!\!\\
  \!\!- \overline{\widehat{H}}_1&- \overline{\widehat{H}}_3& \overline{\widehat{H}}_0& \overline{\widehat{H}}_2\!\!\\
  \!\!- \overline{\widehat{H}}_3& \overline{\widehat{H}}_1&- \overline{\widehat{H}}_2& \overline{\widehat{H}}_0\!\!
  \end{array}
  \right]
\end{equation}
with
\begin{eqnarray*}
% \nonumber to remove numbering (before each equation)
   && \overline{\widehat{H}}_0=\left[
   \begin{array}{cccc}
   h_{11}^{(0)}&\gamma_{12}&\gamma_{13}&\Gamma_{14}^{(0)}\\
   h_{21}^{(0)}&\overline{\widehat{h}}_{22}^{(0)}&\overline{\widehat{h}}_{23}^{(0)}&\overline{\widehat{H}}_{24}^{(0)}\\
    0&\overline{\widehat{h}}_{32}^{(0)}&\overline{\widehat{h}}_{33}^{(0)}&\overline{\widehat{H}}_{34}^{(0)}\\
     0&0&\overline{\widehat{H}}_{43}^{(0)}&\overline{\widehat{H}}_{44}^{(0)}\\
   \end{array}
   \right],
   \overline{\widehat{H}}_1=\left[
   \begin{array}{cccc}
  h_{11}^{(1)}&0&0&0\\
   0&\overline{\widehat{h}}_{22}^{(1)}&\overline{\widehat{h}}_{23}^{(1)}&\overline{\widehat{H}}_{24}^{(1)}\\
    0& \overline{\widehat{h}}_{32}^{(1)}&\overline{\widehat{h}}_{33}^{(1)}&\overline{\widehat{H}}_{34}^{(1)}\\
     0&0&\overline{\widehat{H}}_{43}^{(1)}&\overline{\widehat{H}}_{44}^{(1)}\\
   \end{array}
   \right],\\
   &&  \overline{\widehat{H}}_2=\left[
   \begin{array}{cccc}
  h_{11}^{(2)}&0&0&0\\
   0&\overline{\widehat{h}}_{22}^{(2)}&\overline{\widehat{h}}_{23}^{(2)}&\overline{\widehat{H}}_{24}^{(2)}\\
    0& \overline{\widehat{h}}_{32}^{(2)}&\overline{\widehat{h}}_{33}^{(2)}&\overline{\widehat{H}}_{34}^{(2)}\\
     0&0&\overline{\widehat{H}}_{43}^{(2)}&\overline{\widehat{H}}_{44}^{(2)}\\
   \end{array}
   \right],
   \overline{\widehat{H}}_3=\left[
   \begin{array}{cccc}
  h_{11}^{(3)}&0&0&0\\
   0&\overline{\widehat{h}}_{22}^{(3)}&\overline{\widehat{h}}_{23}^{(3)}&\overline{\widehat{H}}_{24}^{(3)}\\
    0& \overline{\widehat{h}}_{32}^{(3)}&\overline{\widehat{h}}_{33}^{(3)}&\overline{\widehat{H}}_{34}^{(3)}\\
     0&0&\overline{\widehat{H}}_{43}^{(3)}&\overline{\widehat{H}}_{44}^{(3)}\\
   \end{array}
   \right],
\end{eqnarray*}
where $\Gamma_{14}^{(0)}=[\gamma_{14},\cdots,\gamma_{1l}]\in\mathbb{R}^{1\times (l-3)}$ with $\gamma_{1t}\!=\!\sqrt{(h_{1t}^{(0)})^2\!+\!(h_{1t}^{(1)})^2\!+\!(h_{1t}^{(2)})^2\!+\!(h_{1t}^{(3)})^2}$ for $t=4,5,\cdots,l$,
$\overline{\widehat{H}}_{43}^{(r)} =[\overline{\widehat{h}}_{43}^{(r)},0,\cdots,0]^T\in\mathbb{R}^{(l-3)\times 1}$ for $r=0,1,2,3$, $\overline{\widehat{H}}_{i4}^{(r)}\in\mathbb{R}^{1\times (l-3)}$ for $i=2,3$, and $\overline{\widehat{H}}_{44}^{(r)}$ are upper Hessenberg matrices for $r=0,1,2,3$.

Then we generate a Householder matrix $H^{(2)}\in\mathbb{R}^{l\times l}$ such that
\begin{equation*}
\overline{\widehat{H}}_0(1,:)H^{(2)^T}=[h_{11}^0,\overline{\gamma}_{12},0,\cdots,0],
\end{equation*}
where $\overline{\gamma}_{12}=\sqrt{\gamma_{12}^2+\gamma_{13}^2+\cdots+\gamma_{1l}^2}$. So there exists an orthogonally JRS-symplectic matrix $[H^{(2)}\oplus H^{(2)}\oplus H^{(2)}\oplus H^{(2)}]^T$ such that
\begin{equation}\label{eq4}
 \breve{H}= \overline{\widehat{H}}[H^{(2)}\oplus H^{(2)}\oplus H^{(2)}\oplus H^{(2)}]^T=\left[
  \begin{array}{cccc}
   \breve{H}_0& \breve{H}_2& \breve{H}_1& \breve{H}_3\\
  - \breve{H}_2& \breve{H}_0& \breve{H}_3&- \breve{H}_1\\
  - \breve{H}_1&- \breve{H}_3& \breve{H}_0& \breve{H}_2\\
  - \breve{H}_3& \breve{H}_1&- \breve{H}_2& \breve{H}_0
  \end{array}
  \right]
\end{equation}
with
\begin{eqnarray*}
% \nonumber to remove numbering (before each equation)
   && \breve{H}_0=\left[
   \begin{array}{cccc}
   h_{11}^{(0)}&\overline{\gamma}_{12}&0&0\\
   h_{21}^{(0)}&\breve{h}_{22}^{(0)}&\breve{h}_{23}^{(0)}&\breve{H}_{24}^{(0)}\\
    0&\breve{h}_{32}^{(0)}&\breve{h}_{33}^{(0)}&\breve{H}_{34}^{(0)}\\
     0&\breve{h}_{42}^{(0)}&\breve{H}_{43}^{(0)}&\breve{H}_{44}^{(0)}\\
   \end{array}
   \right],
   \breve{H}_1=\left[
   \begin{array}{cccc}
  h_{11}^{(1)}&0&0&0\\
   0&\breve{h}_{22}^{(1)}&\breve{h}_{23}^{(1)}&\breve{H}_{24}^{(1)}\\
    0& \breve{h}_{32}^{(1)}&\breve{h}_{33}^{(1)}&\breve{H}_{34}^{(1)}\\
     0&\breve{h}_{42}^{(1)}&\breve{H}_{43}^{(1)}&\breve{H}_{44}^{(1)}\\
   \end{array}
   \right],\\
   &&  \breve{H}_2=\left[
   \begin{array}{cccc}
  h_{11}^{(2)}&0&0&0\\
   0&\breve{h}_{22}^{(2)}&\breve{h}_{23}^{(2)}&\breve{H}_{24}^{(2)}\\
    0& \breve{h}_{32}^{(2)}&\breve{h}_{33}^{(2)}&\breve{H}_{34}^{(2)}\\
     0&\breve{h}_{42}^{(2)}&\breve{H}_{43}^{(2)}&\breve{H}_{44}^{(2)}\\
   \end{array}
   \right],
   \breve{H}_3=\left[
   \begin{array}{cccc}
  h_{11}^{(3)}&0&0&0\\
   0&\breve{h}_{22}^{(3)}&\breve{h}_{23}^{(3)}&\breve{H}_{24}^{(3)}\\
    0& \breve{h}_{32}^{(3)}&\breve{h}_{33}^{(3)}&\breve{H}_{34}^{(3)}\\
     0&\breve{h}_{42}^{(3)}&\breve{H}_{43}^{(3)}&\breve{H}_{44}^{(3)}\\
   \end{array}
   \right].
\end{eqnarray*}

Denote $\beta_{21}=|h_{21}^{(0)}|$ and take a $4l\times 4l$ orthogonally JRS-symplectic matrix
\begin{equation*}
  G\!\!=\!\!\left[
   \begin{smallmatrix}
  \!\!I_{1}&0&0\!\!\\
  \!\!0& h_{21}^{(0)}/\beta_{21}&0\!\!\\
     \!\!0&0&I_{l-2}\!\!\\
   \end{smallmatrix}
   \right]
   \oplus \left[
 \begin{smallmatrix}
  \!\!I_{1}&0&0\!\!\\
  \!\!0& h_{21}^{(0)}/\beta_{21}&0\!\!\\
    \!\! 0&0&I_{l-2}\!\!\\
   \end{smallmatrix}
   \right]
   \oplus\left[
  \begin{smallmatrix}
  \!\!I_{1}&0&0\!\!\\
  \!\!0& h_{21}^{(0)}/\beta_{21}&0\!\!\\
    \!\! 0&0&I_{l-2}\!\!\\
\end{smallmatrix}
   \right]
   \oplus\left[
\begin{smallmatrix}
  \!\!I_{1}&0&0\!\!\\
  \!\!0& h_{21}^{(0)}/\beta_{21}&0\!\!\\
     \!\!0&0&I_{l-2}\!\!\\
\end{smallmatrix}
   \right]
\end{equation*}
such that
\begin{equation}\label{eq5}
  \dot{H}=G\breve{H}=\left[
  \begin{array}{cccc}
   \dot{H}_0& \dot{H}_2& \dot{H}_1& \dot{H}_3\\
  - \dot{H}_2& \dot{H}_0& \dot{H}_3&- \dot{H}_1\\
  - \dot{H}_1&- \dot{H}_3& \dot{H}_0& \dot{H}_2\\
  - \dot{H}_3& \dot{H}_1&- \dot{H}_2& \dot{H}_0
  \end{array}
  \right]
\end{equation}
with
\begin{eqnarray*}
% \nonumber to remove numbering (before each equation)
   && \dot{H}_0=\left[
   \begin{array}{cccc}
   h_{11}^{(0)}&\overline{\gamma}_{12}&0&0\\
  \beta_{21}&\dot{h}_{22}^{(0)}&\dot{h}_{23}^{(0)}&\dot{H}_{24}^{(0)}\\
    0&\breve{h}_{32}^{(0)}&\breve{h}_{33}^{(0)}&\breve{H}_{34}^{(0)}\\
     0&\breve{h}_{42}^{(0)}&\breve{H}_{43}^{(0)}&\breve{H}_{44}^{(0)}\\
   \end{array}
   \right],
   \dot{H}_1=\left[
   \begin{array}{cccc}
  h_{11}^{(1)}&0&0&0\\
   0&\dot{h}_{22}^{(1)}&\dot{h}_{23}^{(1)}&\dot{H}_{24}^{(1)}\\
    0& \breve{h}_{32}^{(1)}&\breve{h}_{33}^{(1)}&\breve{H}_{34}^{(1)}\\
     0&\breve{h}_{42}^{(1)}&\breve{H}_{43}^{(1)}&\breve{H}_{44}^{(1)}\\
   \end{array}
   \right],\\
   &&  \dot{H}_2=\left[
   \begin{array}{cccc}
  h_{11}^{(2)}&0&0&0\\
   0&\dot{h}_{22}^{(2)}&\dot{h}_{23}^{(2)}&\dot{H}_{24}^{(2)}\\
    0& \breve{h}_{32}^{(2)}&\breve{h}_{33}^{(2)}&\breve{H}_{34}^{(2)}\\
     0&\breve{h}_{42}^{(2)}&\breve{H}_{43}^{(2)}&\breve{H}_{44}^{(2)}\\
   \end{array}
   \right],
   \dot{H}_3=\left[
   \begin{array}{cccc}
  h_{11}^{(3)}&0&0&0\\
   0&\dot{h}_{22}^{(3)}&\dot{h}_{23}^{(3)}&\dot{H}_{24}^{(3)}\\
    0& \breve{h}_{32}^{(3)}&\breve{h}_{33}^{(3)}&\breve{H}_{34}^{(3)}\\
     0&\breve{h}_{42}^{(3)}&\breve{H}_{43}^{(3)}&\breve{H}_{44}^{(3)}\\
   \end{array}
   \right].
\end{eqnarray*}
Notice that the submatrix of $\dot{H}$ obtained by deleting the $1$, $l+1$, $2l+1$, $3l+1$ rows and columns is a $4(l-1)\times 4(l-1)$ JRS-symmetric matrix. By the induction assumption, the assertion is true for $n=l$. This prove the theorem.

\end{document}

%% file: ex_shared.tex
\usepackage{lipsum}
\usepackage{amsfonts}
\usepackage{graphicx}
\usepackage{epstopdf}
\usepackage{algorithm}
\usepackage{algorithmicx}
\usepackage{amsmath,amssymb,amsfonts}
\usepackage{mathrsfs}
\usepackage{bm}
\usepackage{appendix}
\ifpdf
  \DeclareGraphicsExtensions{.eps,.pdf,.png,.jpg}
\else
  \DeclareGraphicsExtensions{.eps}
\fi

\newsiamremark{remark}{Remark}
\newsiamremark{hypothesis}{Hypothesis}
\crefname{hypothesis}{Hypothesis}{Hypotheses}
\newsiamthm{claim}{Claim}

\headers{Structure preserving QCG-type methods}{ B.H Huang, T Li, W Li}

\title{Structure preserving quaternion conjugate gradient-type methods for  solving  non-Hermitian quaternion linear systems\thanks{Corresponding author: Wen Li.
\funding{This work was funded by the National Natural Science Foundation of China [grant numbers 12471351, 12401493, 12171168 and 12071159], and the Fujian Province Natural Science Foundation of China [grant number 2022J01194].}}}
\author{Baohua Huang\thanks{School of Mathematics and Statistics, Key Laboratory of Analytical Mathematics and Applications  (Ministry of Education), Fujian Normal University, Fuzhou 350117, P. R. China
  (\email{baohuahuang@fjnu.edu.cn}).}
  \and Tao Li\thanks{School of Mathematics and Statistics, Hainan University, Haikou 570228, P. R. China
  (\email{tli@hainanu.edu.cn}).}
  \and Wen Li\thanks{School of Mathematical Sciences, South China Normal University, Guangzhou 510631, P. R. China
  (\email{liwen@scnu.edu.cn}).}
}
\usepackage{amsopn}

%% file: main.bbl
\begin{thebibliography}{99}
\bibitem{h1844}
W.R. Hamilton, On quaternions, \textit{In: Proceeding of the Royal Irish Academy}, November 11, 1844.

\bibitem{h1866}
W.R. Hamilton, Elements of Quaternions, \textit{Longmans, Green and Co.}, London, 1866.

\bibitem{a1994}
S.L. Adler, Quaternionic quantum mechanics and quantum fields, \textit{Oxford U.P.}, New York, 1994.

\bibitem{p1989}
D. Platnick, Quaternion calculus as a basic tool in computer graphics, \textit{Visual Comput.}, 5 (1989), 2-13.

\bibitem{gcc2018}
Y. Guan, M.T. Chu, D.L. Chu, SVD-based algorithms for the best rank-1 approximation
of a symmetric tensor, \textit{SIAM J. Matrix Anal. Appl.}, 39 (2018), 1095-1115.

\bibitem{pml2020}
T. Parcollet, M. Morchid, G. Linares, A survey of quaternion neural networks, \textit{AI
Rev.}, 53 (2020), 2957-2982.

\bibitem{sgr2017}
L.S. Saoud, R. Ghorbani,  F. Rahmoune, Cognitive quaternion valued neural network
and some applications, \textit{Neurocomputing}, 221 (2017), 85-93.

\bibitem{bm2004}
N. Le Bihan, J. Mars, Singular value decomposition of quaternion matrices: A new tool
for vector-sensor signal processing, \textit{Signal Process.}, 84 (2004), 1177-1199.

\bibitem{sv2011}
\"{O}.N. Subakan, B.C. Vemuri, A quaternion framework for color image smoothing and
segmentation, \textit{Int. J. Comput. Vis.}, 91 (2011), 233-250.

\bibitem{jns2019}
Z.G. Jia, M.K. Ng,  G.J. Song, Robust quaternion matrix completion with applications
to image inpainting, Numer. \textit{Linear Algebra Appl.}, 26 (2019), 1070-5325. %(4)

\bibitem{jnw2019}
Z.G. Jia, M.K. Ng, W. Wang, Color image restoration by saturation-value total variation, \textit{SIAM J. Imaging Sci.}, 12 (2019), 972-1000.

\bibitem{cjpp2021}
Y. Chen, Z.G. Jia, Y. Peng,  Y.X. Peng, A new structure-preserving quaternion QR
decomposition method for color image blind watermarking, \textit{Signal Process.}, 185 (2021),
108088.

\bibitem{sdn2021}
G.J. Song, W. Ding,   M.K. Ng, Low-rank pure quaternion approximation for pure
quaternion matrices, \textit{SIAM J. Matrix Anal. Appl.}, 42 (2021), 58-82.

\bibitem{zt2013}
M.K. Zak, F. Toutounian, Nested splitting conjugate gradient method for matrix equation $AXB=C$ and preconditioning, \textit{Comput. Math. Appl.}, 66 (2013), 269-278.%(3):

\bibitem{zwz2016}
R. Zhou, X. Wang, P. Zhou, A modified HSS iteration method for solving the complex linear matrix equation $AXB = C$, \textit{J. Comput. Math.}, 34 (2016), 437-450.%(4):

\bibitem{ttlx2017}
Z.L. Tian, M.Y. Tian, Z.Y. Liu, T.Y. Xu, The Jacobi and Gauss Seidel-type iteration methods for the matrix equation, \textit{Appl. Math. Comput.}, 292 (2017), 63-75.

\bibitem{hm2016}
N. Huang, C.F. Ma, Modified conjugate gradient method for obtaining the minimum-norm solution
of the generalized coupled Sylvester-conjugate matrix equations, \textit{Appl. Math. Modell.}, 40 (2016), 1260-1275.

\bibitem{s1981}
Y. Saad, Krylov subspace methods for solving large unsymmetric linear systems, \textit{Math. Comput.}, 37 (1981), 105-126.

\bibitem{s1989}
P. Sonneveld, CGS, a fast Lanczos-type solver for nonsymmetric linear systems, \textit{SIAM J.
Sci. Stat. Comput.}, 10 (1989), 36-52.

\bibitem{fn1991}
R.W. Freund, N.M. Nachtigal, QMR: a quasi-minimal residual method for non-Hermitian linear
systems, \textit{Numer. Math.}, 60 (1991) 315-339.

\bibitem{l1952}
C. Lanczos, Solution of systems of linear equations by minimized iterations, \textit{J. Res. Nat. Bur.
Standards}, 49 (1952), 33-53.

\bibitem{ssz2009}
T. Sogabe, M. Sugihara,  S.L. Zhang, An extension of conjugate residual method to
nonsymmetric linear systems, \textit{J. Comput. Appl. Math.}, 226 (2009), 103-113.

\bibitem{ssy1988}
M.A. Saunders, H.D. Simon, E.L. Yip, Two conjugate-gradient-type methods for unsymmetric linear equations, \textit{SIAM J. Numer. Anal.},  25 (1988), 927-940.

\bibitem{jn2021}
Z.G. Jia,  M.K. Ng, Structure preserving quaternion generalized minimal residual method,
\textit{SIAM J. Matrix Anal. Appl.}, 42 (2021), 616-634.

\bibitem{lw2023}
T. Li,  Q.W. Wang, Structure preserving quaternion full orthogonalization
method with applications, \textit{Numer Linear Algebra Appl.}, 30 (2023), e2495.

\bibitem{lw2024}
T. Li, Q.W. Wang, Structure preserving quaternion biconjugate gradient method,
\textit{SIAM J. Matrix Anal. Appl.}, 45 (2024), 306-326.

\bibitem{lwz2024}
T. Li, Q.W. Wang
QQMR: A structure preserving quaternion quasi-minimal residual method. \textit{Math. Comp.}, (2025), 1-30. DOI:https://doi.org/10.1090/mcom/4074.

\bibitem{o2005}
G. Opfer, The conjugate gradient algorithm applied to quaternion valued matrices, \textit{Journal of Applied Mathematics \& Mechanics}, 85 (2005), 660-672. %DOI:10.1002/zamm.200410191. (9):

\bibitem{ps1975}
C.C. Paige, M. A. Saunders, Solutions of sparse indefinite systems of linear equations, \textit{SIAM J. Numer. Anal.}, 12 (1975), 617-629.

\bibitem{gmp2013}
R. Ghiloni, V. Moretti,  A. Perotti, Continuous slice functional calculus in quaternionic Hilbert spaces, \textit{Rev. Math. Phys.}, 25 (2013), 1350006.

\bibitem{jwzc2018}
Z.G. Jia, M.X. Wei, M. Zhao, Y. Chen, A new real structure-preserving quaternion
QR algorithm, \textit{J. Comput. Appl. Math.}, 343 (2018), 26-48.

\bibitem{jo2003}
D. Janovsk$\acute{a}$, G. Opfer, Givens' transformation applied to quaternion valued matrices, \textit{BIT Numer. Math.}, 43 (2003), 991-1002.

\bibitem{S2001}
S.H. Strogatz, Nonlinear Dynamics and Chaos: With Applications to Physics, Biology,
Chemistry and Engineering, \textit{Westview Press, Boulder, Co}, 2001.

\bibitem{vtv} 
S.H. Chan, R. Khoshabeh, K.B. Gibson, et al., An augmented Lagrangian method for total variation video restoration, \textit{IEEE Trans. Image Process.}, 20 (2011), 3097-3111.

\bibitem{WBS2004} 
Z. Wang, A. C. Bovik, H. R. Sheikh, Image quality assessment: from error visibility to structural similarity, \textit{IEEE Trans. Image Process.}, 13 (2004), 600-612.

\bibitem{psf} 
P.C. Hansen, J.G. Nagy, D.P. O'leary, Deblurring images: matrices, spectra, and filtering, \textit{SIAM}, 2006.

\bibitem{Smith2} 
A.R. Smith, Alpha and the history of digital compositing, \textit{Technical Memo 7, Microsoft Corporation}, 1995.

%{\sc Z.~G. Jia, M.~X. Wei, M. Zhao, and Y. Chen}, {\em A new real structure-preserving
%quaternion QR algorithm}, J. Comput. Appl. Math., 343 (2018),  pp.~26--48.

\end{thebibliography}
